\numberwithin{equation}{section}
\theoremstyle{plain}   
\newtheorem{bigthm}{Theorem}   
\newtheorem{theorem}[equation]{Theorem}  
\newtheorem{cor}[equation]{Corollary}     
\newtheorem{lemma}[equation]{Lemma}         
\newtheorem{prop}[equation]{Proposition} 
\newtheorem{addendum}[equation]{Addendum}
\theoremstyle{definition}
\newtheorem{definition}[equation]{Definition}
\theoremstyle{remark}
\newtheorem{remark}[equation]{Remark}
\newtheorem{example}[equation]{Example}
\newcommand{\Hom}{\operatorname{Hom}}
\newcommand{\Spec}{\operatorname{Spec}}
\newcommand{\TR}{\operatorname{TR}}
\newcommand{\N}{\mathbb{N}}
\newcommand{\Z}{\mathbb{Z}}
\newcommand{\R}{\mathbb{R}}
\newcommand{\F}{\mathbb{F}}
\newcommand{\Fp}{\mathbb{F}_p}
\newcommand{\id}{\operatorname{id}}
\newcommand{\pr}{\operatorname{pr}}
\begin{document}

\title{The big de~Rham-Witt complex
\thanks{Generous assistance from DNRF Niels Bohr Professorship, JSPS
  Grant-in-Aid 23340016, and CMI Senior Scholarship is gratefully acknowledged}
}

\titlerunning{The big de~Rham-Witt complex}

\author{Lars Hesselholt}

\institute{Lars Hesselholt \at
Nagoya University, Nagoya, Japan and University of Copenhagen, Denmark\\
\email{larsh@math.nagoya-u.ac.jp and larsh@math.ku.dk}
}

\date{}

\maketitle


\section*{Introduction}

The big de~Rham-Witt complex was introduced by the author and Madsen
in~\cite{hm2} with the purpose of giving an algebraic description of
the equivariant homotopy groups in low degrees of B\"{o}kstedt's
topological Hochschild spectrum of a commutative ring. This functorial
algebraic description, in turn, is essential for understading
algebraic $K$-theory by means of the cyclotomic trace map of
B\"{o}kstedt-Hsiang-Madsen~\cite{bokstedthsiangmadsen};
compare~\cite{hm4,h2,gh2}. The original construction, which relied on
the adjoint functor theorem, was very indirect and a direct
construction has been lacking. In this paper, we give a new and
explicit construction of the big de~Rham-Witt complex and we also
correct the $2$-torsion which was not quite correct in the original
construction. 

The new construction is based on a theory, which is
developed first, of modules and derivations over a $\lambda$-ring. The
main result of this first part of the paper is that the universal
derivation of a $\lambda$-ring is given by the universal derivation of
the underlying ring together with an additional structure that depends
directly on the $\lambda$-ring structure in question. In the case of
the universal $\lambda$-ring, which is given by the ring of big Witt
vectors, this additional structure consists in divided Frobenius
operators on the module of K\"{a}hler differentials. It is the
existence of these divided Frobenius operators that makes the new
direct construction of the big de~Rham-Witt complex possible. This is
carried out in the second part of the paper, where we also show that
the big de~Rham-Witt complex behaves well with respect to \'{e}tale
morphisms. Finally, we explicitly evaluate the big de~Rham-Witt
complex of the ring of integers.

In more detail, let $A$ be a ring, which we always assume to be
commutative and unital. The ring $\mathbb{W}(A)$ of big Witt vectors
in $A$ is equipped with a natural action through ring homomorphisms by
the multiplicative monoid $\N$ of positive integers, where the action
by $n \in \N$ is given by the $n$th Frobenius map
$$\xymatrix{
{ \mathbb{W}(A) } \ar[r]^-{F_n} &
{ \mathbb{W}(A). } \cr
}$$
The Frobenius maps give rise to a natural ring homomorphism
$$\xymatrix{
{ \mathbb{W}(A) } \ar[r]^-{\Delta} &
{ \mathbb{W}(\mathbb{W}(A)) } \cr
}$$
whose Witt components $\Delta_e \colon \mathbb{W}(A) \to \mathbb{W}(A)$
are characterized by the formula
$$F_n(a) = \sum_{e \mid n} e\Delta_e(a)^{n/e}.$$
The triple $(\mathbb{W}(-),\Delta,\epsilon)$ with $\epsilon \colon
\mathbb{W}(A) \to A$ the first Witt component is a comonad on the
category of rings and a $\lambda$-ring in the sense of
Grothendieck~\cite{grothendieck1} is precisely a coalgebra
$(A,\lambda_A)$ over this comonad. 

Recently, Borger~\cite{borger2} has proposed that a
$\lambda$-ring structure $\lambda_A \colon A \to \mathbb{W}(A)$ on a
ring $A$ be considered as descent data from $\Z$-algebras to algebras
over a deeper base $\F_1$. This begs the question as to the natural notions
of modules and derivations over $\lambda$-rings. We show here that the
general approach of Beck~\cite{beck} leads to the following
answer. First, if $(A,\lambda_A)$ is a $\lambda$-ring, then the ring $A$ is
equipped with an action by the multiplicative monoid $\N$ through ring
homomorphisms, where the action by $n \in \N$ is given by the $n$th
associated Adams operation
$$\xymatrix{
{ A } \ar[r]^-{\psi_{A,n}} &
{ A } \cr
}$$
defined by the formula
$$\psi_{A,n}(a) = \sum_{e \mid n} e\lambda_{A,e}(a)^{n/e}.$$
Here $\lambda_{A,e} \colon A \to A$ is the $e$th Witt component of
$\lambda_A \colon A \to \mathbb{W}(A)$. Now, the category of
$(A,\lambda_A)$-modules is identified with the category of left
modules over the twisted monoid algebra $A^{\psi}[\N]$ with the
product defined by the formula 
$$n \cdot a = \psi_{A,n}(a) \cdot n.$$
Hence, an $(A,\lambda_A)$-module is a pair $(M,\lambda_M)$ that
consists of an $A$-module $M$ and an $\N$-indexed family of maps
$\lambda_{M,n} \colon M \to M$ such that $\lambda_{M,n}$ is
$\psi_{A,n}$-linear, $\lambda_{M,1} = \id_M$, and
$\lambda_{M,m}\lambda_{M,n} = \lambda_{M,mn}$. Moreover, we identify
the derivations
$$\xymatrix{ 
{ (A,\lambda_A) } \ar[r]^-{D} &
{ (M,\lambda_M) } \cr
}$$
with the derivations $D \colon A \to M$ that satisfy the identities
$$\lambda_{M,n}(Da) = \sum_{e \mid n}
\lambda_{A,e}(a)^{(n/e)-1}D\lambda_{A,e}(a).$$
It is now easy to show that there is a universal derivation
$$\xymatrix{
{ (A,\lambda_A) } \ar[r]^-{d} &
{ (\Omega_{(A,\lambda_A)}^1,\lambda_{\Omega_{(A,\lambda_A)}^1}). } \cr
}$$
We prove the following result.

\begin{bigthm}\label{universalderivations}For every $\lambda$-ring
$(A,\lambda_A)$, the canonical map
$$\xymatrix{
{ \Omega_A^1 } \ar[r] &
{ {\Omega_{(A,\lambda_A)}^1} } \cr
}$$
is an isomorphism of $A$-modules.
\end{bigthm}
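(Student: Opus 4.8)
The plan is to show that the canonical $A$-module map $\Omega_A \to \Omega_{(A,\lambda_A)}$ admits an inverse by constructing a $\lambda$-structure on $\Omega_A$ — that is, a family $\lambda_{\Omega_A,n}$ making $(\Omega_A,\lambda_{\Omega_A,\boldsymbol{\cdot}})$ an $(A,\lambda_A)$-module — together with a derivation $d\colon (A,\lambda_A) \to (\Omega_A, \lambda_{\Omega_A,\boldsymbol{\cdot}})$ lifting the ordinary universal derivation $d\colon A \to \Omega_A$. By the universal property of $\Omega_{(A,\lambda_A)}$ this derivation factors through a map $\Omega_{(A,\lambda_A)} \to \Omega_A$, and one checks on generators $da$ that the two composites are the respective identities.

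First I would pin down what the operators $\lambda_{\Omega_A,n}$ must be. Applying the comparison map to the defining identity for a derivation $D\colon (A,\lambda_A)\to(M,\lambda_M)$, the value of $\lambda_{\Omega_A,n}$ on the generator $da$ is forced to be $\sum_{e\mid n}\lambda_{A,e}(a)^{(n/e)-1}\,d\lambda_{A,e}(a)$; one recognizes the right-hand side as $F_n$ applied in the de~Rham-Witt sense, but here it is just a formula in $\Omega_A$. So I would \emph{define}
$$
\lambda_{\Omega_A,n}(da) \;=\; \sum_{e\mid n}\lambda_{A,e}(a)^{(n/e)-1}\,d\lambda_{A,e}(a)
$$
and extend $\psi_{A,n}$-semilinearly, i.e.\ $\lambda_{\Omega_A,n}(a\,\omega) = \psi_{A,n}(a)\,\lambda_{\Omega_A,n}(\omega)$. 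The first real task is to check this is well defined on $\Omega_A$: one must verify that $\lambda_{\Omega_A,n}$ respects the Leibniz relation $d(ab) = a\,db + b\,da$ and $A$-bilinearity. This is a direct computation using that $\lambda_{A,e}$ is a ring homomorphism component of $\lambda_A$ — concretely, one feeds $d(ab)$ into the formula, expands $\lambda_{A,e}(ab)=\lambda_{A,e}(a)\lambda_{A,e}(b)$ inside the $e$-sum, and matches terms with $a\,\lambda_{\Omega_A,n}(db)+b\,\lambda_{\Omega_A,n}(da)$ after multiplying by $\psi_{A,n}$; the monoid identities $\psi_{A,m}\psi_{A,n}=\psi_{A,mn}$, $\psi_{A,1}=\id$ follow similarly from the coalgebra axioms for $(A,\lambda_A)$, equivalently from the known composition law for Frobenius components of $\Delta$. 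This makes $(\Omega_A,\lambda_{\Omega_A,\boldsymbol{\cdot}})$ an object of the category of $(A,\lambda_A)$-modules, and by construction $d\colon A\to\Omega_A$ satisfies exactly the derivation identity displayed in the excerpt, hence is a derivation over $(A,\lambda_A)$.

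The remaining step is formal: the universal derivation $d\colon(A,\lambda_A)\to\Omega_{(A,\lambda_A)}$ induces a unique $(A,\lambda_A)$-module map $\varphi\colon\Omega_{(A,\lambda_A)}\to\Omega_A$ with $\varphi(da)=da$, while the canonical map $\iota\colon\Omega_A\to\Omega_{(A,\lambda_A)}$ sends $da\mapsto da$; since $\Omega_A$ is generated as an $A$-module by the $da$ and $\Omega_{(A,\lambda_A)}$ is generated as an $(A,\lambda_A)$-module — a fortiori as an $A$-module, once we know the $\lambda_{M,n}$ of a generator is again an $A$-combination of generators, which is precisely the forced formula above — by the $da$, both composites fix a generating set and hence are identities. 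I expect the main obstacle to be the well-definedness check for $\lambda_{\Omega_A,n}$: verifying compatibility with the Leibniz rule requires genuinely using the identity $\psi_{A,n}(a)=\sum_{e\mid n}e\,\lambda_{A,e}(a)^{n/e}$ together with $d$ of it, and the bookkeeping over divisors $e\mid n$ has to be organized so that cross terms cancel; everything else is a diagram chase or an application of Theorem statements already in place.
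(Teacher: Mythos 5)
There is a genuine gap in the crucial step. Your well-definedness check for $\lambda_{\Omega_A,n}$ rests on the identity $\lambda_{A,e}(ab)=\lambda_{A,e}(a)\lambda_{A,e}(b)$, but the Witt components $\lambda_{A,e}\colon A\to A$ of $\lambda_A$ are \emph{not} ring homomorphisms (nor additive), as the paper points out explicitly in Remark~\ref{lambdaadamsoperations}. Only the total map $\lambda_A\colon A\to\mathbb{W}(A)$ is a ring homomorphism; the components $\lambda_{A,e}(ab)$ and $\lambda_{A,e}(a+b)$ are governed by the universal Witt multiplication and addition polynomials, not by the componentwise formulas you expand. So the computation as you sketch it does not go through. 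The underlying difficulty is real: multiplying your defining formula by $n$ gives $n\lambda_{\Omega_A,n}(da)=d\psi_{A,n}(a)$, and since $\psi_{A,n}$ genuinely is a ring homomorphism the Leibniz identity for $\lambda_{\Omega_A,n}$ does hold after multiplication by $n$ — but this only pins it down modulo $n$-torsion in $\Omega_A$, which is not enough for a general ring $A$. You flag well-definedness as ``the main obstacle'' and then dispatch it with an incorrect expansion; this is exactly where the argument breaks.

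The paper sidesteps all of this: it never verifies any module axioms directly. Instead it sets up a commuting square of adjunctions between $\mathscr{A}/A$, $(\mathscr{A}/A)_{\operatorname{ab}}\simeq\mathscr{M}(A)$, $\mathscr{A}_\lambda/(A,\lambda_A)$, and $(\mathscr{A}_\lambda/(A,\lambda_A))_{\operatorname{ab}}\simeq\mathscr{M}(A,\lambda_A)$, observes that the diagram of right adjoints commutes up to natural isomorphism, and concludes that the diagram of left adjoints does too; taking $(B,\lambda_B)=(A,\lambda_A)$ gives the isomorphism $\Omega_A\to u'(\Omega_{(A,\lambda_A)})$. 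The $(A,\lambda_A)$-module structure on $\Omega_A$ (and the divided Frobenius formula of Theorem~\ref{dividedfrobenius}) then falls out a posteriori rather than being built by hand. If you wish to keep your direct construction, the honest route is to reduce to a torsion-free universal example: apply the computation to the free $\lambda$-ring on two generators, where $\Omega$ is torsion-free so that dividing by $n$ is legitimate, and deduce the general case by naturality of the Witt addition and multiplication polynomials; equivalently, argue through the ring isomorphism $\mathbb{W}(A)\oplus\mathbb{W}(\Omega_A)\cong\mathbb{W}(A\oplus\Omega_A)$ of Lemma~\ref{directsumring} and the induced $\lambda$-structure on the square-zero extension $A\oplus\Omega_A$. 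Without one of these devices the ``direct computation'' claim is not substantiated.
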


It follows that for a $\lambda$-ring $(A,\lambda_A)$, the $A$-module
of differentials $\Omega_A^1$ carries the richer structure of
an $(A,\lambda_A)$-module. In the case of $(\mathbb{W}(A),\Delta_A)$,
this implies that there are natural $F_n$-linear maps $F_n \colon
\Omega_{\mathbb{W}(A)}^1 \to \Omega_{\mathbb{W}(A)}^1$ defined by
$$F_n(da) = \sum_{e \mid n} \Delta_e(a)^{(n/e)-1}d\Delta_e(a)$$
such that $F_1 = \id$, $F_mF_n = F_{mn}$, $dF_n(a) = nF_n(da)$, and
$F_n(d[a]) = [a]^{n-1}d[a]$. The $p$-typical analog of $F_p$ was also
constructed by Borger and Wieland in~\cite[12.8]{borgerwieland}.

The construction of the de~Rham-Witt complex begins with the following
variant of the de~Rham complex. The ring $\mathbb{W}(\Z)$ contains
exactly the four units $\pm[\pm 1]$, all of which are square roots of
$[1]$, and the $2$-torsion element
$$d\log[-1] = [-1]^{-1}d[-1] = [-1]d[-1] \in \Omega_{\mathbb{W}(A)}^1$$
plays a special r\^{o}le. We define the graded $\mathbb{W}(A)$-algebra
$$\hat{\Omega}_{\mathbb{W}(A)}^{\boldsymbol{\cdot}} =
T_{\mathbb{W}(A)}^{\boldsymbol{\cdot}}\Omega_{\mathbb{W}(A)}^1 / J^{\boldsymbol{\cdot}}$$
to be the quotient of the tensor algebra of the $\mathbb{W}(A)$-module
$\smash{ \Omega_{\mathbb{W}(A)}^1 }$ by the graded ideal $J^{\boldsymbol{\cdot}}$
generated by all elements of the form
$$da \otimes da - d\log[-1] \otimes F_2da$$
with $a \in \mathbb{W}(A)$. It is an anticommutative graded ring which
carries a unique graded derivation $d$ that extends $d \colon
\mathbb{W}(A) \to \Omega_{\mathbb{W}(A)}^1$ and satisfies
$$dd\omega = d\log[-1] \cdot d\omega.$$
Moreover, the maps $F_n \colon \mathbb{W}(A) \to \mathbb{W}(A)$ and
$F_n \colon \Omega_{\mathbb{W}(A)}^1 \to \Omega_{\mathbb{W}(A)}^1$ extend
uniquely to a map of graded rings $F_n \colon
\Omega_{\mathbb{W}(A)}^{\boldsymbol{\cdot}} \to
\Omega_{\mathbb{W}(A)}^{\boldsymbol{\cdot}}$ which satisfies $dF_n =
nF_nd$. Next, we show that the maps $d$ and $F_n$ both descend to the
further quotient
$$\check{\Omega}_{\mathbb{W}(A)}^{\boldsymbol{\cdot}}
= \hat{\Omega}_{\mathbb{W}(A)}^{\boldsymbol{\cdot}} /
K^{\boldsymbol{\cdot}}$$
by the graded ideal generated by all elements of the form
$$F_pdV_p(a) - da - (p-1)d\log[-1] \cdot a$$
with $p$ a prime number and $a \in \mathbb{W}(A)$. We now recall the
Verschiebung maps
$$V_n \colon \mathbb{W}(A) \to \mathbb{W}(A)$$
which are additive and satisfy the projection formula
$$a V_n(b) = V_n(F_n(a)b).$$
These maps, however, do not extend to $\smash{
  \hat{\Omega}_{\mathbb{W}(A)}^{\boldsymbol{\cdot}} }$ or 
$\smash{ \check{\Omega}_{\mathbb{W}(A)}^{\boldsymbol{\cdot}} }$, and
the de~Rham-Witt complex, roughly speaking, is the largest quotient
$$\xymatrix{
{ \check{\Omega}_{\mathbb{W}(A)}^{\boldsymbol{\cdot}} } \ar[r]^{\eta} &
{ \mathbb{W}\,\Omega_A^{\boldsymbol{\cdot}} } \cr
}$$
such that the Verschiebung maps extend to
$\mathbb{W}\,\Omega_A^{\boldsymbol{\cdot}}$ and such that the extended
maps $F_n$ and $V_n$ satisfy the projection formula. The precise
definition given in Section~\ref{drwsection} below is by recursion
with respect to the quotients $\mathbb{W}_S(A)$ of $\mathbb{W}(A)$
where $S$ ranges over the finite subsets $S \subset \N$ that are
stable under division. We further prove the following result to the
effect that the de~Rham-Witt complex may be characterized as the
universal example of an algebraic structure called a Witt complex, the
precise definition of which is given in Definition~\ref{bigwittcomplex}.

\begin{bigthm}\label{bigdrwexists}
There exists an initial Witt complex $S \mapsto
\mathbb{W}_S\Omega_A^{\boldsymbol{\cdot}}$ over the ring $A$. In
addition, the canonical  maps
$$\xymatrix{
{ \check{\Omega}_{\mathbb{W}_S(A)}^q } \ar[r]^-{\eta_S} &
{ \mathbb{W}_S\Omega_A^q } \cr
}$$
are surjective, and the diagrams
$$\xy
(-31,0)*{\xymatrix{
{ \check{\Omega}_{\mathbb{W}_S(A)}^q } \ar[r]^-{\eta_S} \ar[d]^{R_T^S} &
{ \mathbb{W}_S\Omega_A^q } \ar[d]^{R_T^S} \cr
{ \check{\Omega}_{\mathbb{W}_T(A)}^q } \ar[r]^-{\eta_T} &
{ \mathbb{W}_T\Omega_A^q } \cr
}};
(-11,0)*{\xymatrix{
{ \check{\Omega}_{\mathbb{W}_S(A)}^q } \ar[r]^-{\eta_S} \ar[d]^{d} &
{ \mathbb{W}_S\Omega_A^q } \ar[d]^{d} \cr
{ \check{\Omega}_{\mathbb{W}_S(A)}^{q+1} } \ar[r]^-{\eta_S} &
{ \mathbb{W}_S\Omega_A^{q+1} } \cr
}};
(9,0)*{\xymatrix{
{ \check{\Omega}_{\mathbb{W}_S(A)}^{q\phantom{+1}} } \ar[r]^-{\eta_S} \ar[d]^{F_m} &
{ \mathbb{W}_S\Omega_A^q } \ar[d]^{F_m} \cr
{ \check{\Omega}_{\mathbb{W}_{S/m}(A)}^q } \ar[r]^-{\eta_{S/m}} &
{ \mathbb{W}_{S/m}\Omega_A^q } \cr
}};
(0,-17)*{};
\endxy$$
commute.
\end{bigthm}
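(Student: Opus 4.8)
The plan is to construct the system $S\mapsto\mathbb{W}_S\Omega_A^{\boldsymbol{\cdot}}$ explicitly, by induction on the cardinality of the finite truncation set $S$, and then to verify that it is an initial Witt complex in the sense of Definition~\ref{bigwittcomplex}. The base cases are forced: $\mathbb{W}_\emptyset\Omega_A^{\boldsymbol{\cdot}}=0$ and $\mathbb{W}_{\{1\}}\Omega_A^{\boldsymbol{\cdot}}=\check{\Omega}_{\mathbb{W}_{\{1\}}(A)}^{\boldsymbol{\cdot}}=\Omega_A^{\boldsymbol{\cdot}}$, with $\eta_{\{1\}}=\id$ and the evident trivial operators. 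For the inductive step, fix $S$ with $|S|\geq 2$ and assume $\mathbb{W}_T\Omega_A^{\boldsymbol{\cdot}}$ together with all its structure maps $R$, $d$, $F_n$, $V_n$ and the unit $\lambda_T\colon\mathbb{W}_T(A)\to\mathbb{W}_T\Omega_A^0$ has been constructed for every truncation set $T$ with $|T|<|S|$; this covers all proper sub-truncation sets of $S$ and, for every prime $p\in S$, the truncation set $S/p$, which satisfies $|S/p|<|S|$ because $m\mapsto pm$ embeds $S/p$ into $S$ without hitting $1\in S$. I would then define $\mathbb{W}_S\Omega_A^q$ to be the quotient of the graded $\mathbb{W}_S(A)$-module
$$
\check{\Omega}_{\mathbb{W}_S(A)}^q\;\oplus\;\bigoplus_{p\in S\text{ prime}}\bigl(V_p\,\mathbb{W}_{S/p}\Omega_A^q\;\oplus\;dV_p\,\mathbb{W}_{S/p}\Omega_A^{q-1}\bigr)
$$
(a copy of $\mathbb{W}_{S/p}\Omega_A^q$ with elements written $V_p(\omega)$, and a copy of $\mathbb{W}_{S/p}\Omega_A^{q-1}$ with elements written $dV_p(\omega)$) by the graded submodule $N_S^{\boldsymbol{\cdot}}$ generated by all relations dictated by the axioms of a Witt complex: additivity of $V_p$, the projection formula $x\cdot V_p(\omega)=V_p(F_p(x)\omega)$, the relations $R_T^S V_p=V_p R_{T/p}^{S/p}$, $F_pV_p=p$ together with the formulas for $F_qV_p$ and $V_qV_p$, $F_p\,dV_p(a)=da+(p-1)\,d\log[-1]\cdot a$, the Leibniz rule making $dV_p(\omega)$ the differential of $V_p(\omega)$, and the relation identifying $\check{\Omega}_{\mathbb{W}_S(A)}^{\boldsymbol{\cdot}}\xrightarrow{R}\check{\Omega}_{\mathbb{W}_T(A)}^{\boldsymbol{\cdot}}\xrightarrow{\eta_T}\mathbb{W}_T\Omega_A^{\boldsymbol{\cdot}}$ with the summandwise restriction. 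The map $\eta_S$ is the first inclusion followed by the projection, and $R$, $d$, $F_m$, $V_m$, $\lambda_S$ are given by the evident formulas; one checks they descend to the quotient. (Alternatively and more invariantly, $\mathbb{W}_S\Omega_A^{\boldsymbol{\cdot}}$ is the largest quotient of $\check{\Omega}_{\mathbb{W}_S(A)}^{\boldsymbol{\cdot}}$ to which the Verschiebung operators from the previously constructed complexes $\mathbb{W}_{S/p}\Omega_A^{\boldsymbol{\cdot}}$ extend compatibly with $R$ and the projection formula; in that formulation surjectivity of $\eta_S$ is automatic but one must first prove that such a largest quotient exists, i.e.\ that the intersection of the relevant kernels still has the required property.)

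It then remains to carry out three verifications. First, that $S\mapsto\mathbb{W}_S\Omega_A^{\boldsymbol{\cdot}}$ really is a Witt complex: here every axiom either already holds in $\check{\Omega}_{\mathbb{W}_S(A)}^{\boldsymbol{\cdot}}$ — one uses that $\hat{\Omega}_{\mathbb{W}_S(A)}^{\boldsymbol{\cdot}}$ is anti-symmetric with $dd\omega=d\log[-1]\cdot d\omega$, that $dF_n=nF_nd$, and that the $K^{\boldsymbol{\cdot}}$-relation holds there — or was imposed as a generator of $N_S^{\boldsymbol{\cdot}}$, so the real content is the mutual consistency of these relations and the descent of $R$, $d$, $F_m$, $V_m$ to the quotient. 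Second, initiality: given any Witt complex $E$, the unit $\lambda_{E,S}\colon\mathbb{W}_S(A)\to E_S^0$ extends uniquely to a morphism of differential graded rings $\check{\Omega}_{\mathbb{W}_S(A)}^{\boldsymbol{\cdot}}\to E_S^{\boldsymbol{\cdot}}$, because all the defining relations of the tensor algebra quotients $\hat{\Omega}$ and $\check{\Omega}$ — anti-symmetry, $dd\omega=d\log[-1]\cdot d\omega$, the $J^{\boldsymbol{\cdot}}$- and $K^{\boldsymbol{\cdot}}$-relations — are forced in any Witt complex; extending by the operators $V_{E,p}$ on the remaining summands and using that $N_S^{\boldsymbol{\cdot}}$ maps to zero in $E$, one obtains by induction on $|S|$ a unique map of Witt complexes $\mathbb{W}_\bullet\Omega_A^{\boldsymbol{\cdot}}\to E$. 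Third, the three displayed squares commute by construction, since $R$, $d$ and $F_m$ on $\mathbb{W}_S\Omega_A^{\boldsymbol{\cdot}}$ were defined to be induced by the corresponding operators on $\check{\Omega}_{\mathbb{W}_S(A)}^{\boldsymbol{\cdot}}$ along $\eta_S$.

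The surjectivity of $\eta_S$ — equivalently, that $\mathbb{W}_S\Omega_A^{\boldsymbol{\cdot}}$ is generated as a graded ring by its degree-zero part $\mathbb{W}_S(A)$ — is the one substantive point, and I expect it to be the main obstacle. It amounts to showing that each summand $V_p\,\mathbb{W}_{S/p}\Omega_A^q$ and $dV_p\,\mathbb{W}_{S/p}\Omega_A^{q-1}$ already lies in the image of $\eta_S$; by induction on $|S|$ and on $q$ this reduces to rewriting $V_p(a_0\,da_1\cdots da_q)$ and $dV_p(a_0\,da_1\cdots da_q)$, with $a_i\in\mathbb{W}_{S/p}(A)$, as $\mathbb{W}_S(A)$-linear combinations of products $b_0\,db_1\cdots db_q$ with $b_i\in\mathbb{W}_S(A)$. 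These identities rest on the relations $F_p\,dV_p(a)=da+(p-1)\,d\log[-1]\cdot a$, the projection formula, and $dd\omega=d\log[-1]\cdot d\omega$ — which is precisely why one passes to $\hat{\Omega}_{\mathbb{W}_S(A)}^{\boldsymbol{\cdot}}$ and then $\check{\Omega}_{\mathbb{W}_S(A)}^{\boldsymbol{\cdot}}$ before forming the de~Rham-Witt quotient. Apart from this, the delicate part is the simultaneous bookkeeping, as $S$ grows, needed to keep the freshly introduced Verschiebung $V_p$ at level $S$ compatible with every Frobenius $F_m$, with the differential $d$, and with all restrictions $R_T^S$; it is the consistency of this whole web of relations, rather than any individual computation, that requires care.
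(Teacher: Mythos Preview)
Your proposal is viable, but the paper takes what you call the ``alternative'' route, and this choice matters. Rather than adjoining formal summands $V_p\,\mathbb{W}_{S/p}\Omega_A^q$ and $dV_p\,\mathbb{W}_{S/p}\Omega_A^{q-1}$ and then proving they were redundant, the paper defines $\mathbb{W}_S\Omega_A^{\boldsymbol{\cdot}}$ directly as the quotient of $\check{\Omega}_{\mathbb{W}_S(A)}^{\boldsymbol{\cdot}}$ by the graded ideal $N_S^{\boldsymbol{\cdot}}$ generated by all elements $\sum_\alpha V_n(x_\alpha)\,dy_{1,\alpha}\cdots dy_{q,\alpha}$ (and their differentials) for which $\eta_{S/n}(\sum_\alpha x_\alpha\,F_n dy_{1,\alpha}\cdots F_n dy_{q,\alpha})=0$ in the inductively constructed $\mathbb{W}_{S/n}\Omega_A^q$, for all $n\geq 2$ (not just primes). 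The Verschiebung is then \emph{defined} by $V_n\eta_{S/n}(x\,F_n dy_1\cdots F_n dy_q)=\eta_S(V_n(x)\,dy_1\cdots dy_q)$; the identity $dx=F_n dV_n(x)-(n-1)x\,F_n d[-1]_S$ guarantees that every element of $\mathbb{W}_{S/n}\Omega_A^q$ has this shape, and $N_S$ is exactly the ambiguity. Surjectivity of $\eta_S$ is thus automatic.

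The trade-off is that the paper's hard work shifts to showing that $R_T^S$, $d$, and $F_m$ all preserve $N_S^{\boldsymbol{\cdot}}$. This is the ``simultaneous bookkeeping'' you anticipate, and the paper carries it out by an explicit case analysis on the pair $(m,n)$: the three cases $n=km$, $m=kn$, and $(m,n)=1$, with the last case requiring a separate argument that both $m\cdot\eta_{S/m}F_m d\omega$ and $n\cdot\eta_{S/m}F_m d\omega$ vanish. Your direct-sum construction would need essentially the same computations to verify that $F_m$ and $d$ are well-defined on the quotient by your relations $N_S^{\boldsymbol{\cdot}}$, plus the surjectivity argument on top; so the paper's route is strictly more economical. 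Your sketch of surjectivity is correct in outline---it is precisely the projection formula plus the $F_p dV_p$ identity---but in the paper's organization this computation is absorbed into the definition of $V_n$ rather than proved afterward.
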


If $A$ is an $\Fp$-algebra and $S = \{1, p, \dots, p^{n-1}\}$,
then $\mathbb{W}_S\Omega_A^{\boldsymbol{\cdot}}$ agrees with the
original $p$-typical de~Rham-Witt complex
$W_n\Omega_A^{\boldsymbol{\cdot}}$ of
Bloch-Deligne-Illusie~\cite{illusie}. More generally, if $A$ is a
$\Z_{(p)}$-algebra and $S = \{1, p, \dots, p^{n-1}\}$, then
$\mathbb{W}_S\Omega_A^{\boldsymbol{\cdot}}$ agrees with the
$p$-typical de~Rham-Witt complex $W_n\Omega_A^{\boldsymbol{\cdot}}$
constructed by the author and Madsen~\cite{hm3} for $p$ odd and by
Costeanu~\cite{costeanu} for $p = 2$. Finally, if $2$ is either
invertible or zero in $A$ and $S$ is arbitrary, then
$\mathbb{W}_S\Omega_A^{\boldsymbol{\cdot}}$ agrees with the big
de~Rham-Witt complex introduced by the author and
Madsen~\cite{hm2}. We also note that if $f \colon R \to A$ is a map of
$\smash{ \Z_{(p)} }$-algebras and $S = \{1, p, \dots, p^{n-1}\}$, then the
relative $p$-typical de~Rham-Witt complex
$\smash{ W_n\Omega_{A/R}^{\boldsymbol{\cdot}} }$ of
Langer-Zink~\cite{langerzink} agrees with the quotient of
$\mathbb{W}_S\Omega_A^{\boldsymbol{\cdot}}$ by the differential graded
ideal generated by the image of $\mathbb{W}_S\Omega_R^1 \to
\mathbb{W}_S\Omega_A^1$. 

We recall that van~der~Kallen~\cite[Theorem~2.4]{vanderkallen1} and
Borger~\cite[Theorem~B]{borger} have proved independently that for
every \'{e}tale morphism $f \colon A \to B$ and every finite subset $S
\subset \N$ stable under division, the induced morphism
$$\xymatrix{
{ \mathbb{W}_S(A) } \ar[r]^-{\mathbb{W}_S(f)} &
{ \mathbb{W}_S(B) } \cr
}$$
again is \'{e}tale. Based on this theorem, we prove the following
result.

\begin{bigthm}\label{etaledescent}Let $f \colon A \to B$ be an
  \'{e}tale map and let $S \subset \N$ be a finite subset stable under
  division. Then the induced map
$$\xymatrix{
{ \mathbb{W}_S(B) \otimes_{\mathbb{W}_S(A)} \mathbb{W}_S\Omega_A^q }
\ar[r] &
{ \mathbb{W}_S\Omega_B^q } \cr
}$$
is an isomorphism, for all $q$.
\end{bigthm}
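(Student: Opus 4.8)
The plan is to identify, via the universal property of Theorem~\ref{bigdrwexists}, the functor $S \mapsto E_S^q := \mathbb{W}_S(B)\otimes_{\mathbb{W}_S(A)}\mathbb{W}_S\Omega_A^q$ with $S \mapsto \mathbb{W}_S\Omega_B^q$. The only external input is the theorem of van~der~Kallen and Borger quoted above: $\mathbb{W}_S(f)\colon\mathbb{W}_S(A)\to\mathbb{W}_S(B)$ is \'etale, and, as a mild elaboration of it, the squares comparing $\mathbb{W}_S$ with $\mathbb{W}_T$ (along restriction, for $T\subset S$) and with $\mathbb{W}_{S/m}$ (along $F_m$) are cocartesian. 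In particular $\Omega_{\mathbb{W}_S(B)}\cong\mathbb{W}_S(B)\otimes_{\mathbb{W}_S(A)}\Omega_{\mathbb{W}_S(A)}$, and since \'etale maps are formally \'etale, every derivation out of $\mathbb{W}_S(A)$ has a unique extension along $\mathbb{W}_S(f)$ to any $\mathbb{W}_S(B)$-module; I shall write $\iota$ for the structural maps $\mathbb{W}_S(A)\to\mathbb{W}_S(B)$.

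The first and main step is to promote $S\mapsto E_S^{\boldsymbol{\cdot}}$ to a Witt complex over $B$. The graded $\mathbb{W}_S(B)$-algebra structure, the unit $\lambda\colon\mathbb{W}_S(B)\to E_S^0=\mathbb{W}_S(B)$, the restriction maps $R_T^S$, and the Frobenius maps $F_m\colon E_S^{\boldsymbol{\cdot}}\to E_{S/m}^{\boldsymbol{\cdot}}$ are all obtained by extension of scalars from $\mathbb{W}_S\Omega_A^{\boldsymbol{\cdot}}$ using the cocartesian squares above. The differential $d$ is defined on $E_S^0=\mathbb{W}_S(B)$ as the unique derivation extending $d\colon\mathbb{W}_S(A)\to\mathbb{W}_S\Omega_A^1$, and then on all of $E_S^{\boldsymbol{\cdot}}$ by the Leibniz rule and $d(1\otimes\omega)=1\otimes d\omega$; the relations $dd\omega=d\log[-1]\cdot d\omega$ and $dF_m=mF_md$ hold on $E_S^{\boldsymbol{\cdot}}$ because in each case the two sides are maps of the same type (a derivation, respectively a derivation relative to $F_m$) into a $\mathbb{W}_S(B)$-module which agree on $\mathbb{W}_S(A)$, so formal \'etaleness forces agreement on $\mathbb{W}_S(B)$ and hence on $E_S^{\boldsymbol{\cdot}}$. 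The Verschiebung is the one genuinely non-formal ingredient: using the cocartesian Frobenius square to write $E_{S/m}^q\cong\mathbb{W}_S(B)\otimes_{\mathbb{W}_S(A)}\mathbb{W}_{S/m}\Omega_A^q$ with the tensor product formed along $F_m\colon\mathbb{W}_S(A)\to\mathbb{W}_{S/m}(A)$, I set $V_m(c\otimes\omega):=c\cdot(1\otimes V_m\omega)$, which is well defined exactly because of the projection formula $V_m(F_m(x)\omega)=x\,V_m\omega$ in $\mathbb{W}_S\Omega_A^{\boldsymbol{\cdot}}$, and which on $E_S^0$ recovers the Verschiebung of $\mathbb{W}_S(B)$ by naturality of $V_m$ together with the projection formula. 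The axioms involving $V_m$ — additivity, $F_mV_m=m$, $V_mV_n=V_{mn}$, the projection formula, and $F_pdV_p(a)=da+(p-1)d\log[-1]\cdot a$ — then follow from the corresponding identities over $A$ combined with $dF_m=mF_md$ over $B$; for instance the last identity, evaluated on $a=\sum F_p(c_i)\iota(x_i)$ with $c_i\in\mathbb{W}_S(B)$ and $x_i\in\mathbb{W}_{S/p}(A)$, unwinds on both sides to $\sum\big(pF_p(dc_i)\iota(x_i)+F_p(c_i)(1\otimes dx_i)+(p-1)F_p(c_i)\iota(x_i)(1\otimes d\log[-1])\big)$ in $E_{S/p}^1$.

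With $S\mapsto E_S^{\boldsymbol{\cdot}}$ a Witt complex over $B$, Theorem~\ref{bigdrwexists} supplies a map of Witt complexes over $B$
$$\psi\colon\mathbb{W}_S\Omega_B^{\boldsymbol{\cdot}}\longrightarrow E_S^{\boldsymbol{\cdot}}.$$
On the other hand the map in the statement, $\phi\colon E_S^q\to\mathbb{W}_S\Omega_B^q$, $\phi(c\otimes\omega)=c\cdot f_*(\omega)$ with $f_*$ the map induced by functoriality, is checked (by the same formal-\'etaleness arguments) to be a map of Witt complexes over $B$, so $\phi\circ\psi=\id$ by initiality of $\mathbb{W}_S\Omega_B^{\boldsymbol{\cdot}}$. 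Finally, both $\psi\circ f_*$ and the inclusion $\omega\mapsto 1\otimes\omega$ are maps of Witt complexes over $A$ from $\mathbb{W}_S\Omega_A^{\boldsymbol{\cdot}}$ to $E_S^{\boldsymbol{\cdot}}$, hence equal by initiality of $\mathbb{W}_S\Omega_A^{\boldsymbol{\cdot}}$; therefore $\im\psi$ contains $1\otimes\mathbb{W}_S\Omega_A^{\boldsymbol{\cdot}}$ and, $\psi$ being $\mathbb{W}_S(B)$-linear, all of $E_S^{\boldsymbol{\cdot}}$. Thus $\psi$ is surjective with left inverse $\phi$, hence an isomorphism, and $\phi=\psi^{-1}$ is the desired isomorphism.

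I expect the main obstacle to be the first step: not the formal relations, which are handled uniformly by formal \'etaleness, but checking that the Verschiebung maps defined through the projection formula verify every axiom of a Witt complex (including compatibility with $R$ and $F$ across all $S$), and pinning down which squares attached to $\mathbb{W}_S(-)$ are cocartesian — a point that should follow from, or be a routine sharpening of, the theorems of van~der~Kallen and Borger. If one wished to bypass the surjectivity argument, one could instead verify directly that $S\mapsto E_S^{\boldsymbol{\cdot}}$ is the initial Witt complex over $B$ equipped with a map of Witt complexes over $A$ from $\mathbb{W}_S\Omega_A^{\boldsymbol{\cdot}}$, which yields $\psi\circ\phi=\id$ as well.
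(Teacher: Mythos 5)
Your proof follows essentially the same route as the paper: build a Witt complex structure over $B$ on $\mathbb{W}_S(B)\otimes_{\mathbb{W}_S(A)}\mathbb{W}_S\Omega_A^{\boldsymbol{\cdot}}$ (differential via formal \'etaleness of $\mathbb{W}_S(f)$, Verschiebung via the cocartesian Frobenius square of Borger and van~der~Kallen), and then produce the two-sided inverse from initiality over $B$ for one composite and initiality over $A$ together with $\mathbb{W}_S(B)$-linearity for the other. The only cosmetic differences are that you present $V_m$ through a balanced-map/projection-formula argument where the paper composes $(\id\otimes V_n)\circ(F_n\otimes\id)^{-1}$ (the same map), and that your passing claim that the restriction square for $\mathbb{W}_S$ is cocartesian is neither needed for the argument nor asserted in Theorem~\ref{borgervanderkallentheorem}.
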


To prove Theorem~\ref{etaledescent}, we verify that the left-hand
terms form a Witt complex over the ring $B$ and use
Theorem~\ref{bigdrwexists} to obtain the inverse of the map in the
statement. The verification of the Witt complex axioms, in turn, is
significantly simplified by the existence of the divided Frobenius on
$\Omega_{\mathbb{W}(A)}$ as follows from
Theorem~\ref{universalderivations}.

Finally, we evaluate the de~Rham-Witt complex of $\Z$. The result is
that $\mathbb{W}\,\Omega_{\Z}^q$ is non-zero for $q \leqslant 1$
only. Moreover, we may consider
$\mathbb{W}\,\Omega_{\Z}^{\boldsymbol{\cdot}}$ as the quotient
$$\xymatrix{
{ \Omega_{\mathbb{W}(\Z)}^{\boldsymbol{\cdot}} } \ar[r] &
{ \mathbb{W}\,\Omega_{\Z}^{\boldsymbol{\cdot}} } \cr
}$$
of the de~Rham complex of $\mathbb{W}(\Z)$ by a differential graded
ideal generated by elements of degree $1$. Hence, following
Borger~\cite{borger2}, we may interpret
$\mathbb{W}\,\Omega_{\Z}^{\boldsymbol{\cdot}}$ as the complex of
differentials along the leaves of a codimension $1$ foliation of
$\Spec(\Z)$ considered as an $\F_1$-space. We note that, by contrast, 
$\Omega_{\mathbb{W}(\Z)}^q$ is non-zero for all $q$.

As mentioned earlier, the big de~Rham-Witt complex was introduced
in~\cite{hm2} with the purpose of giving an algebraic
description of the equivariant homotopy groups
$$\TR_q^r(A) 
= [ S^q \wedge (\mathbb{T}/C_r)_+, T(A) ]_{\mathbb{T}}$$
of the topological Hochschild $\mathbb{T}$-spectrum $T(A)$ of the ring
$A$. Here $\mathbb{T} = \R/\Z$ is the circle group, $C_r \subset
\mathbb{T}$ is the subgroup of order $r$, and $[-,-]_{\mathbb{T}}$ is
the abelian group of maps in the homotopy category of orthogonal
$\mathbb{T}$-spectra. We proved in~\cite[Section~1]{h} that the
groups $\TR_q^r(A)$ give rise to a Witt complex over the ring $A$ in
the sense of Definition~\ref{bigwittcomplex} below. Therefore, by  Theorem~\ref{bigdrwexists}, there is a unique map
$$\xymatrix{
{ \mathbb{W}_{\langle r \rangle}\Omega_A^q } \ar[r] &
{ \TR_q^r(A) } \cr
}$$
of Witt complexes over $A$, where $\langle r \rangle$ denotes the set
of divisors of $r$. We will show elsewhere that this map is an
isomorphism for all $r$ and all $q \leqslant 1$. 

\section{Witt vectors}\label{wittvectorsection}

We begin with a review of Witt vectors and $\lambda$-rings. The
material in this section is due to Cartier~\cite{cartier},
Grothendieck~\cite{grothendieck1}, Teichm\"{u}ller~\cite{teichmuller},
and Witt~\cite{witt} and accordingly we make no claim of
originality. The reader is also referred to the very readable account by
Bergman~\cite[Appendix]{mumford1} and to the more modern and general
exposition by Borger~\cite{borger}. We further mention the books by
Hazewinkel~\cite{hazewinkel} and Knutson~\cite{knutson}, which focus
more on the r\^{o}le of symmetric functions.

In the approach to Witt vectors taken here, all necessary congruences
are isolated in following lemma, commonly attributed to
Dwork~\cite[E.2.4]{hazewinkel}. Let $\N$ be the set of positive 
integers. We say that a subset $S \subset \N$, possibly empty, is a
truncation set if whenever $n \in S$ and $e$ is a divisor in $n$, then
$e \in S$. The ring of big Witt vectors $\mathbb{W}_S(A)$ associated
with the ring $A$ and the truncation set $S$ is defined to be the set
$A^S$ equipped with a ring structure such that the ghost map
$$\xymatrix{
{ \mathbb{W}_S(A) } \ar[r]^-{w} &
{ {A^{S}} } \cr
}$$
%
%
%
%
that takes the vector $a = (a_n \mid n \in S\,)$ to the sequence $w(a)
= \langle w_n(a) \mid
n \in S \,\rangle$ with
$$w_n(a) = \sum_{e \mid n} ea_e^{n/e}$$
is a natural transformation of functors from the category of
rings to itself. Here the target $A^S$ is considered a
ring with componentwise addition and multiplication. The elements
$a_n$ and $w_n(a)$ are called the Witt components and the ghost
components of the vector $a$, respectively. To prove 
that there exists a unique ring structure on $\mathbb{W}_S(A)$
characterized in this way, we first recall the following result, a
different proof of which is given
in~\cite[Lemma~17.6.1]{hazewinkel}. We write $v_p(n)$ for the $p$-adic
valuation of $n$, normalized such that $v_p(p) = 1$.

\begin{lemma}\label{dwork}Suppose that for every prime number
$p \in S$, there exists a ring homomorphism $\phi_p \colon A \to A$ with
the property that $\phi_p(a) \equiv a^p$ modulo $pA$. Then for every
sequence $x = \langle x_n \mid n \in S \rangle$, the following {\rm
  (i)--(ii)} are equivalent.
\begin{enumerate}
\item[{\rm (i)}]The sequence $x$ is in the image of the ghost map
$w \colon \mathbb{W}_S(A) \to A^S.$
\item[{\rm (ii)}]For every prime number $p \in S$ and every $n \in S$
with $v_p(n) \geqslant 1$,
$$x_n \equiv \phi_p(x_{n/p}) \hskip8mm \text{modulo $p^{v_p(n)}A$.}$$
\end{enumerate}
\end{lemma}

\begin{proof}We first show that if $a \equiv b$ modulo $pA$, then
$a^{p^{v-1}} \equiv b^{p^{v-1}}$ modulo $p^vA$. If we write $a = b +
p\epsilon$, then
$$a^{p^{v-1}} = b^{p^{v-1}} + \sum_{1 \leqslant i \leqslant p^{v-1}}
\binom{p^{v-1}}{i} b^{p^{v-1}-i}p^i\epsilon^i.$$
In general, the $p$-adic valuation of the binomial coefficient
$\binom{m+n}{n}$ is equal to the number of carrys in the addition of
$m$ and $n$ in base $p$. So in particular,
$$v_p\left( \binom{p^{v-1}}{i} p^i \right) = v - 1 + i - v_p(i) \geqslant v$$
which proves the claim. Now, suppose that $x = w(a)$
satisfies~(i). Since $\phi_p \colon A \to A$ is a ring-homomorphism
and lifts the Frobenius of $A/pA$, we have
$$\phi_p(w_{n/p}(a)) = \sum_{e \mid (n/p)} e \phi_p(a_e^{n/pe})$$
which is congruent to $\smash{\sum_{e \mid (n/p)} ea_e^{n/e}}$ modulo
$p^{v_p(n)}A$. But if $e$ divides $n$ but not $n/p$, then $v_p(e) =
v_p(n)$ and hence, this sum is congruent to $\smash{\sum_{e \mid n}
  ea_e^{n/e} = w_n(a)}$ modulo $p^{v_p(n)}A$. This shows that $x$
satisfies~(ii). Conversely, suppose that $x$ satisfies~(ii). We find a
vector $a = (a_n \mid n \in S)$ with $w_n(a) = x_n$ as follows. We let
$a_1 = x_1$ and assume, inductively, that $a_e$ has been chosen, for
all $e \neq n$ that divide $n$, such that $w_e(a) = x_e$. The
calculation above shows that for every prime number $p$ dividing $n$,
$$x_n - \sum_{e \mid n, e \neq n} ea_e^{n/e}$$
is congruent to zero modulo $p^{v_p(n)}A$. Therefore, this difference is
divisible by $n$ and hence is equal to $na_n$ for some 
$a_n \in A$. This shows that $x$ satisfies~(i).
\end{proof}

\begin{prop}\label{ringstructure}There exists a unique ring structure
on the domain of the ghost map
$$\xymatrix{
{ \mathbb{W}_S(A) } \ar[r]^-{w} &
{ A^{S} } \cr
}$$
making it a natural transformation of functors from rings to rings.
\end{prop}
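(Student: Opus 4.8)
The plan is to reduce the whole statement to the case of a torsion-free ring, where the ghost map is injective, and then to use Dwork's lemma (Lemma~\ref{dwork}) to manufacture the structure maps. The first step I would record is the injectivity statement: if $A$ is $\Z$-torsion-free, then $w \colon \mathbb{W}_S(A) = A^S \to A^S$ is injective. Indeed, isolating the top term in $w_n = \sum_{d \mid n} d\,a_d^{n/d}$ gives $w_n(a) = n a_n + P_n$, where $P_n$ is a polynomial in the $a_d$ with $d \mid n$ and $d < n$; so if $w(a) = w(a')$ one shows $a_n = a'_n$ by induction on $n$ along the divisibility order, using torsion-freeness to cancel the factor $n$.

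Next I would apply Dwork's lemma to the polynomial ring $R = \Z[a_n, b_n \mid n \in S]$. This ring is torsion-free, and for each prime $p$ it carries the ring endomorphism $\phi_p$ that fixes $\Z$ and raises every generator to its $p$th power; clearly $\phi_p(x) \equiv x^p$ modulo $pR$. Since $w_n(a) \equiv \phi_p(w_{n/p}(a))$ modulo $p^{v_p(n)}R$ whenever $v_p(n) \geq 1$ (and similarly for the $b_n$), and since $\phi_p$ commutes with addition, multiplication, and negation, the three sequences $\langle w_n(a) + w_n(b) \rangle$, $\langle w_n(a) w_n(b) \rangle$, and $\langle -w_n(a) \rangle$ all satisfy the congruences of Lemma~\ref{dwork}. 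Hence each lies in the image of the ghost map for $R$, yielding universal polynomials $s_n, p_n \in \Z[a_d, b_d \mid d \mid n]$ and $\iota_n \in \Z[a_d \mid d \mid n]$ with $w_n(s) = w_n(a) + w_n(b)$, $w_n(p) = w_n(a) w_n(b)$, and $w_n(\iota) = -w_n(a)$; moreover the constant vectors $0 = (0,0,\dots)$ and $1 = (1,0,0,\dots)$ have ghost sequences $\langle 0 \rangle$ and $\langle 1 \rangle$.

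Now, for an arbitrary ring $A$, I would define addition, multiplication, negation, and the two distinguished elements on $\mathbb{W}_S(A) = A^S$ by evaluating the universal polynomials $s_n, p_n, \iota_n$ coordinatewise — equivalently, by transporting the universal vectors along the unique ring homomorphism $R \to A$ determined by a given pair of Witt vectors. By construction $w$ is then additive and multiplicative and respects $0$ and $1$, and both $w$ and the operations are natural in $A$. It remains to verify the ring axioms for $\mathbb{W}_S(A)$. Each axiom is an identity between two maps $\mathbb{W}_S(A)^k \to \mathbb{W}_S(A)$ assembled from the structure maps, so it is represented by a pair of vectors of polynomials over the relevant universal polynomial ring $\Z[a_n, b_n, c_n \mid n \in S]$ (or fewer variables); applying $w$ to both sides and using that $A^S$ with coordinatewise operations is an honest ring, the two vectors acquire the same ghost coordinates; since the universal polynomial ring is torsion-free, the injectivity statement of the first step forces the two vectors to coincide, so the identity holds over the universal ring and hence, by naturality, over every $A$. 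For uniqueness, any ring structure making $w$ a natural transformation to rings must, over the universal ring $\Z[a_n, b_n \mid n \in S]$, assign to addition (and likewise to multiplication, negation, and the units) a vector with prescribed ghost coordinates, which is unique by injectivity; thus it agrees with the structure constructed above on the universal ring, and naturality propagates this to all rings.

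The only real obstacle is the essentially bookkeeping step of translating each ring axiom into an identity of vectors of polynomials over a suitable universal polynomial ring so that the injectivity argument applies uniformly; once the injectivity of $w$ on torsion-free rings and Lemma~\ref{dwork} are available, everything else is formal, and I would not write out these verifications in detail.
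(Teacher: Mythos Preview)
Your proof is correct and follows essentially the same approach as the paper: construct the universal polynomials $s_n, p_n, \iota_n$ over $\Z[a_n,b_n \mid n\in S]$ via Dwork's lemma, transport them to arbitrary rings by naturality, and verify the ring axioms using injectivity of the ghost map in the torsion-free case. The only cosmetic difference is that you check the ring axioms over the universal polynomial rings and propagate by naturality, whereas the paper checks them for an arbitrary torsion-free ring and then covers the general case by choosing a surjection from a torsion-free ring; these are equivalent maneuvers.
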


\begin{proof}Let $A$ be the free ring generated by $\{a_n,b_n \mid n
  \in S\}$. The unique ring homomorphism $\phi_p \colon A \to A$
that maps $a_n$ to $a_n^p$ and $b_n$ to $b_n^p$ satisfies 
$\phi_p(f) = f^p$ modulo $pA$. Hence, if $a$ and $b$ are the vectors
$(a_n \mid n \in S)$ and $(b_n \mid n \in S)$, respectively, then
Lemma~\ref{dwork} shows that the sequences $w(a) + w(b)$, $w(a) \cdot
w(b)$, and $-w(a)$ are in the image of the ghost map. It follows that
there are sequences of polynomials $s = (s_n \mid n \in S)$, $p = (p_n
\mid n \in S)$, and $i = (i_n \mid n \in S)$ such that $w(s) = w(a) +
w(b)$, $w(p) = w(a) \cdot w(b)$, and $w(i) = - w(a)$. Moreover, since
$A$ is torsion free, the ghost map is injective, and accordingly,
these polynomials are unique.

Let $A'$ be any ring. If $a' = (a_n' \mid n \in S)$ and $b' =
(b_n' \mid n \in S)$ are two vectors in $\mathbb{W}_S(A')$, then there
is a unique ring homomorphism $f \colon A \to A'$ with the property
that $\mathbb{W}_S(f)(a) = a'$ and $\mathbb{W}_S(f)(b) = b'$. We define
$a' + b' = \mathbb{W}_S(f)(s)$, $a \cdot b = \mathbb{W}_S(f)(p)$, and
$-a = \mathbb{W}_S(f)(i)$. To prove that the ring axioms are verified,
suppose first that $A'$ is torsion free. In this case, the ghost map is
injective, and hence, the ring axioms hold since they do so in
$A^S$. In general, we choose a surjective ring homomorphism 
$g \colon A'' \to A'$ from a torsion free ring $A''$. The induced
map $\mathbb{W}_S(g) \colon \mathbb{W}_S(A'') \to \mathbb{W}_S(A')$ is
again surjective, and since the ring axioms hold in the domain, they
do so, too, in the target.
\end{proof}

If $T \subset S$ are two truncation sets, then the forgetful map
$$\xymatrix{
{ \mathbb{W}_S(A) } \ar[r]^-{R_T^S} &
{ \mathbb{W}_T(A) } \cr
}$$
is a natural ring homomorphism called the restriction from $S$ to
$T$. If $S \subset \N$ is a truncation set and $n \in \N$, then the set 
$$S/n = \{ e \in \N \mid ne \in S\}$$
again is a truncation set, possibly empty. For every $n \in \N$, there
is a natural map
$$\xymatrix{
{ \mathbb{W}_{S/n}(A) } \ar[r]^-{V_n} &
{ \mathbb{W}_S(A) } \cr
}$$
that to the vector $a = (a_e \mid e \in S/n)$ assigns the vector $V_n(a) =
(b_m \mid m \in S)$, where $b_m$ is equal to $a_e$, if $m = ne$, and
$0$, otherwise. It is called the $n$th Verschiebung.

\begin{lemma}\label{verschiebung}For every $n \in \N$, the map
$V_n \colon \mathbb{W}_{S/n}(A) \to \mathbb{W}_S(A)$ is additive.
\end{lemma}

\begin{proof}The following diagram, where $V_n^w$ takes the sequence
$\langle x_e \mid e \in S/n \rangle$ to the sequence whose $m$th
component is $nx_e$, if $m = ne$, and $0$, otherwise, commutes.
$$\xymatrix{
{ \mathbb{W}_{S/n}(A) } \ar[r]^-{V_n} \ar[d]^-{w} &
{ \mathbb{W}_S(A) } \ar[d]^-{w} \cr
{ A^{S/n} } \ar[r]^-{V_n^w} &
{ A^S } \cr
}$$
Since $V_n^w$ is additive, so is $V_n$. Indeed, if $A$ is torsion
free, the horizontal maps are both injective, and hence $V_n$ is
additive in this case. In general, we choose a surjective ring
homomorphism $g \colon A' \to A$ and argue as in the proof of
Proposition~\ref{ringstructure}.
\end{proof}

\begin{lemma}\label{frobenius}For every $n \in \N$, there exists a
unique natural ring homomorphism 
$$\xymatrix{
{ \mathbb{W}_S(A) } \ar[r]^-{F_n} &
{ \mathbb{W}_{S/n}(A) } \cr
}$$
called the $n$th Frobenius that makes the following diagram, where the
map $F_n^w$ takes the sequence $x = \langle x_m \mid m \in S\rangle$
to the sequence $F_n^w(x) = \langle x_{ne} \mid e \in S/n \rangle$, commute.  
$$\xymatrix{
{ \mathbb{W}_S(A) } \ar[r]^-{F_n} \ar[d]^-{w} &
{ \mathbb{W}_{S/n}(A) } \ar[d]^-{w} \cr
{ A^S } \ar[r]^-{F_n^w} &
{ A^{S/n} } \cr
}$$
\end{lemma}

\begin{proof}The construction of the map $F_n$ is similar to the proof
of Proposition~\ref{ringstructure}. We let $A$ be the free ring
generated by $\{a_m \mid m \in S\}$, and let $a$ be the vector
$(a_m \mid m \in S)$. By Lemma~\ref{dwork}, the sequence $F_n^w(w(a))
\in A^{S/n}$ is the image by the ghost map of a vector
$$F_n(a) = (f_{n,e} \mid e \in S/n) \in \mathbb{W}_{S/n}(A),$$
this vector being unique since $A$ is torsion free. If $A'$ is any
ring and $a' = (a_m' \mid m \in S)$ a vector in $\mathbb{W}_S(A')$,
then we define $F_n(a') = \mathbb{W}_{S/n}(g)(F_n(a))$, where
$g \colon A \to A'$ is the unique ring homomorphism that maps $a$ to
$a'$. Finally, since the map $F_n^w$ is a ring homomorphism, an
argument similar to the proof of Lemma~\ref{verschiebung} shows that
also $F_n$ is a ring homomorphism.
\end{proof}

The Teichm\"{u}ller representative is the map
$$\xymatrix{
{ A } \ar[r]^-{[-]_S} &
{ \mathbb{W}_S(A) } \cr
}$$
whose $m$th component is $a$, if $m = 1$, and $0$, otherwise. It is a
multiplicative map. Indeed, the following diagram, where $[a]_S^w$
is the sequence with $m$th component $a^m$, commutes, and $[-]_S^w$ is
a multiplicative map.
$$\xymatrix{
{ A } \ar[r]^-{[-]_S} \ar@{=}[d] &
{ \mathbb{W}_S(A) } \ar[d]^-{w} \cr
{ A } \ar[r]^-{[-]_S^w} &
{ A^S } \cr
}$$
In particular, the Teichm\"{u}ller representative $[1]_S$ is the
multiplicative identity element in the ring $\mathbb{W}_S(A)$.

\begin{lemma}\label{variousrelations}Let $S \subset \N$ be a
truncation set and let $A$ be a ring.
\begin{enumerate}
\item[{\rm (i)}]For all $a \in \mathbb{W}_S(A)$, there is a convergent
sum
$$a = \sum_{n \in S} V_n([a_n]_{S/n}).$$
\item[{\rm (ii)}]For all $m,n \in \N$ with greatest common divisor 
$c = (m,n)$, 
$$F_mV_n = cV_{n/c}F_{m/c} \colon \mathbb{W}_{S/c}(A) \to \mathbb{W}_{S/c}(A).$$
\item[{\rm (iii)}]For all $n \in \N$, $a \in \mathbb{W}_S(A)$, and $a'
  \in \mathbb{W}_{S/n}(A)$,
$$a V_n(a') = V_n(F_n(a) a').$$
\item[{\rm (iv)}]For all $m,n \in \N$,
$$\begin{aligned}
F_m F_n = F_{mn} & \colon \mathbb{W}_S(A) \to \mathbb{W}_{S/mn}(A),
\cr
V_nV_m = V_{mn} & \colon \mathbb{W}_{S/mn}(A) \to \mathbb{W}_S(A). \cr
\end{aligned}$$
\item[{\rm (v)}]For all $n \in \N$ and $a \in A$,
$$F_n([a]_S^{\phantom{n}}) = [a]_{S/n}^n.$$
\end{enumerate}
\end{lemma}

\begin{proof}One readily verifies that the two sides of each equation have
the same image by the ghost map. This shows that the relations hold,
if $A$ is torsion free, and hence, in general. In statement~(i), the
convergence, for $S$ infinite, is with respect to the product topology
on $\mathbb{W}_S(A)$ induced by the discrete topology on $A$.
\end{proof}

\begin{prop}\label{W_S(Z)}The ring of Witt vectors in $\Z$ is equal to
the abelian group
$$\mathbb{W}_S(\Z) = \prod_{n \in S} \Z \cdot V_n([1]_{S/n})$$
with the multiplication given by
$$V_m([1]_{S/m}) \cdot V_n([1]_{S/n}) = c \cdot V_e([1]_{S/e})$$
where $c = (m,n)$ and $e = [m,n]$ are the greatest common divisor and
the least common multiple, respectively.
\end{prop}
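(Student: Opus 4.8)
The plan is to combine the additive decomposition of a Witt vector from Lemma \ref{variousrelations}(i) with the observation that, over $\Z$, the only Teichmüller representatives that arise are $[1]$ and $[0]$, so that every Witt vector is a $\Z$-linear combination of the elements $V_n([1]_{S/n})$. First I would establish the additive statement: by Lemma \ref{variousrelations}(i), every $a \in \mathbb{W}_S(\Z)$ can be written as $a = \sum_{n \in S} V_n([a_n]_{S/n})$, and since $[ka]_{S/n} = k[a]_{S/n}$ is false in general but $[m]_{S/n}$ for $m \in \Z$ is not simply $m[1]_{S/n}$, I must instead argue componentwise: the map $\Z^S \to \mathbb{W}_S(\Z)$ sending $(k_n)$ to $\sum_n k_n V_n([1]_{S/n})$ is additive (each $V_n$ is additive by Lemma \ref{verschiebung}), and I would check it is bijective by computing ghost components. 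Under the ghost map, $V_n([1]_{S/n})$ has $m$th ghost component equal to $n$ if $n \mid m$ and $0$ otherwise; hence $\sum_n k_n V_n([1]_{S/n})$ has $m$th ghost component $\sum_{n \mid m} n k_n$. Since $\Z$ is torsion-free the ghost map is injective, and the matrix $(n \cdot [n \mid m])_{m,n \in S}$ is triangular with respect to the divisibility partial order with diagonal entries $n \neq 0$, so it is injective over $\Q$; surjectivity of the original map onto $\mathbb{W}_S(\Z)$ then follows from Lemma \ref{variousrelations}(i), which exhibits an explicit preimage. This proves $\mathbb{W}_S(\Z) = \bigoplus_{n \in S} \Z \cdot V_n([1]_{S/n})$ as an abelian group.

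For the multiplicative structure, the key relations are the projection formula (Lemma \ref{variousrelations}(iii)), the identity $F_nV_n = n$ (Lemma \ref{variousrelations}(ii)), the commutation $F_mV_n = V_nF_m$ for $(m,n)=1$ (Lemma \ref{variousrelations}(iv)), and the facts that $[1]_{S/n}$ is a unit (indeed the identity element, since $[1]^w$ is the all-ones sequence) and that $F_m$, being a ring homomorphism, fixes $[1]$. I would compute
$$V_m([1]_{S/m}) \cdot V_n([1]_{S/n}) = V_m\big( F_m V_n([1]_{S/n}) \cdot [1]_{S/m} \big) = V_m\big( F_m V_n([1]_{S/n}) \big)$$
using Lemma \ref{variousrelations}(iii) and that $[1]$ is the multiplicative unit. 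Writing $m = c m'$, $n = c n'$ with $c = (m,n)$ and $(m',n') = 1$, and using $S/m = (S/c)/m'$ together with Lemma \ref{variousrelations}(iv) to slide $F_{m'}$ past $V_{n'}$, I get $F_m V_n = F_{m'}F_c V_c V_{n'} = F_{m'}(c \cdot V_{n'}) = c\, V_{n'}F_{m'}$ applied to $[1]$, and $F_{m'}[1] = [1]$, so this equals $c\, V_{n'}([1])$. Pushing forward along $V_m$ and using $V_m V_{n'} = V_{m n'}$ with $m n' = c m' n' = [m,n] = e$ yields $V_m([1]_{S/m}) \cdot V_n([1]_{S/n}) = c \cdot V_e([1]_{S/e})$, as claimed. (One must be slightly careful that all truncation sets in sight are the right ones, e.g.\ $n' \in S/m$ exactly when $mn' = e \in S$, which holds since $e$ divides any element of $S$ divisible by both $m$ and $n$; but in fact the product formula should be read inside $\mathbb{W}_S$ with the convention $V_e([1]) = 0$ when $e \notin S$, and this is forced by the additive decomposition anyway.)

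The main obstacle I anticipate is purely bookkeeping: keeping the nested truncation sets $S$, $S/m$, $S/n$, $S/c$, $(S/c)/m'$ straight, and making sure the identities of Lemma \ref{variousrelations} — which were stated for a fixed truncation set — are applied after the correct restrictions, so that for instance $F_c V_c = c$ is used on $\mathbb{W}_{S/m'}$ rather than on $\mathbb{W}_S$. There is no real analytic or combinatorial difficulty beyond this; once the additive decomposition is in hand, everything reduces to manipulating $F$'s and $V$'s with the four relations already proved, and the torsion-freeness of $\Z$ guarantees that any identity can be checked on ghost components if a direct manipulation becomes unwieldy.
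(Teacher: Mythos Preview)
Your multiplicative computation is correct and matches the paper, which simply cites Lemma~\ref{variousrelations}(ii)--(iv) without writing out the $F$/$V$ manipulation you carry out explicitly. The gap is in your surjectivity argument for the additive decomposition. You correctly flag that $[m]_{S/n} \neq m[1]_{S/n}$ in general, but then assert that Lemma~\ref{variousrelations}(i) ``exhibits an explicit preimage'' under the map $(k_n) \mapsto \sum_n k_n V_n([1])$. It does not: the formula $a = \sum_n V_n([a_n])$ produces Teichm\"uller lifts $[a_n]$, not integer multiples of $[1]$, so it does not place $a$ in the $\Z$-span of the $V_n([1]_{S/n})$. Your triangular ghost-matrix argument gives injectivity cleanly, but for surjectivity you would still need to know, for instance, that every $[m]_S$ lies in that span---and that is exactly Addendum~\ref{teichmuller}, which the paper deduces \emph{from} this proposition, so invoking it here would be circular.

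The paper avoids the issue with a short induction on finite $S$ via the exact sequence
\[
0 \to \mathbb{W}_{\{1\}}(\Z) \xrightarrow{\,V_m\,} \mathbb{W}_S(\Z) \xrightarrow{\,R_T^S\,} \mathbb{W}_T(\Z) \to 0,
\]
where $m = \max S$ and $T = S \smallsetminus \{m\}$: the right-hand term is in the span by induction, the left-hand term is visibly $\Z \cdot [1]_{\{1\}}$, and one never has to decompose a Teichm\"uller representative. Your ghost-coordinate approach could be completed by an index computation (showing $[\Z^S : w(\mathbb{W}_S(\Z))] = \prod_{n \in S} n$), but that is no easier than the exact-sequence induction. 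One further small point: you conclude with a direct sum $\bigoplus$, but the statement is a product $\prod$ and is asserted for arbitrary truncation sets; the paper handles infinite $S$ by writing $\mathbb{W}_S(\Z)$ as the limit over its finite sub-truncation sets.
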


\begin{proof}The formula for the product follows from
Lemma~\ref{variousrelations}~(ii)--(iv). For finite $S$, we
prove the statement by induction beginning from the
case $S = \emptyset$ which is trivial. So suppose that $S$ is
non-empty, let $m \in S$ be maximal, and let $T = S
\smallsetminus \{m\}$. The sequence of abelian groups
$$\xymatrix{
{ 0 } \ar[r] &
{ \mathbb{W}_{\{1\}}(\Z) } \ar[r]^-{V_m} &
{ \mathbb{W}_S(\Z) } \ar[r]^-{R_T^S} &
{ \mathbb{W}_T(\Z) } \ar[r] &
{ 0 } \cr
}$$
is exact, and we wish to show that it is equal to the following exact sequence.
$$\xymatrix{
{ 0 } \ar[r] &
{ \Z \cdot {[1]_{\{1\}}} } \ar[r]^-{V_m} &
{ \displaystyle{ \prod_{n \in S} \Z \cdot V_n([1]_{S/n}) } } \ar[r]^-{R_T^S} &
{ \displaystyle{ \prod_{n \in T} \Z \cdot V_n([1]_{T/n}) } } \ar[r] &
{ 0 } \cr
}$$ The latter sequence maps to the former, and by induction, the
right-hand terms of the two sequences are equal. Since also the
left-hand terms are equal, so are the middle terms. This completes the
proof for $S$ finite. Finally, every truncation set $S$ is the union
of its finite sub-truncation sets $S_{\alpha} \subset S$ and
$\mathbb{W}_S(\Z) = \lim_{\alpha} \mathbb{W}_{S_\alpha}(\Z)$.
\end{proof}

The values of the restriction, Frobenius, and Verschiebung maps
on the generators $V_n([1]_{S/n})$ are readily evaluated by using
Lemma~\ref{variousrelations}~(ii)--(iv). To give a formula for the
Teichm\"{u}ller representative, we recall the M\"{o}bius inversion
formula. Let $g \colon \N \to \Z$ be a function and define the
function $f \colon \N \to \Z$ by $f(n) = \sum_{e \mid n} g(e)$. Then
the original function is given by $g(n) = \sum_{e \mid n} \mu(e) f(n/e)$,
where $\mu \colon \N \to \{-1,0,1\}$ is the M\"{o}bius function
defined by $\mu(e) = (-1)^r$, if $e$ is a product of $r \geqslant 0$
distinct prime numbers, and $\mu(e) = 0$, otherwise.

\begin{addendum}\label{teichmuller}If $m$ is an integer and $S$ a
truncation set, then
$$[m]_S = \sum_{n \in S} \frac{1}{n} \big(\sum_{e \mid n}
\mu(e)m^{n/e} \big) V_n([1]_{S/n}).$$
In particular, the square root of unity $[-1]_S$ is equal to
$-[1]_S + V_2([1]_{S/2})$.
\end{addendum}

\begin{proof}It suffices to prove that the formula holds in
$\mathbb{W}_S(\Z)$. By Proposition~\ref{W_S(Z)}, there are
unique integers $r_e$, $e \in S$ such that
$$[m]_S = \sum_{e \in S} r_e V_e([1]_{S/e}).$$
Evaluating the $n$th ghost component of this equation, we find that
$$m^n = \sum_{e \mid n} er_e$$
from which the stated formula follows by M\"{o}bius
inversion. Finally, defining $g(n)$ to be $-1$, if $n = 1$; $2$, if
$n = 2$; and $0$, otherwise, we get $f(n) = \sum_{e \mid n}g(e) =
(-1)^n$, which proves the stated formula for $[-1]_S$.
\end{proof}

If $m = q$ is a prime power, then the coefficient of
$V_n([1]_{S/n})$ in $[m]_S$ is equal to the number of monic irreducible
polynomials of degree $n$ over the finite field $\F_q$.

\begin{lemma}\label{modpfrobenius}If $A$ is an $\Fp$-algebra and $S$ a
truncation set, then
$$F_p = R_{S/p}^S \circ \mathbb{W}_S(\varphi) \colon \mathbb{W}_S(A)
\to \mathbb{W}_{S/p}(A),$$
where $\varphi \colon A \to  A$ is the Frobenius endomorphism. 
\end{lemma}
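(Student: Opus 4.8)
The plan is to verify the asserted identity by reducing to the universal case of a polynomial $\F_p$-algebra and then checking it on ghost components. Concretely, I would first observe that both $F_p$ and $R_{S/p}^S \circ \mathbb{W}_S(\varphi)$ are natural transformations of functors on the category of $\F_p$-algebras, so it suffices to prove the equality when $A = \F_p[a_n \mid n \in S]$ is the polynomial ring on one generator $a_n$ for each $n \in S$, evaluated on the generic vector $a = (a_n \mid n \in S)$; every other case is obtained by applying $\mathbb{W}_{S/p}(g)$ for the unique $\F_p$-algebra map $g$ sending $a$ to the desired vector. The point of this reduction is that a polynomial $\F_p$-algebra, though not torsion free, is reduced, which lets us compare Witt vectors after a faithfully flat (indeed, injective) base change to a torsion-free ring.

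Next I would lift the computation to characteristic zero. Choose a torsion-free ring $\tilde A$ together with a surjection $\tilde A \to A$, for instance $\tilde A = \Z[a_n \mid n \in S]$ mapping onto $\F_p[a_n \mid n \in S]$; more usefully, pick a ring homomorphism $\phi_p \colon \tilde A \to \tilde A$ with $\phi_p(x) \equiv x^p \bmod p\tilde A$, namely the one raising each $a_n$ to its $p$th power, exactly as in the proof of Proposition~\ref{ringstructure}. On $\tilde A$ the ghost map $w \colon \mathbb{W}_S(\tilde A) \to \tilde A^S$ is injective, so any identity between Witt vectors over $\tilde A$ may be checked on ghost coordinates. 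I would then compute the ghost coordinates of $R_{S/p}^S(\mathbb{W}_S(\phi_p)(\tilde a))$ — where $\tilde a$ is the generic vector over $\tilde A$ — and compare with $F_p^w(w(\tilde a))$, whose $d$th coordinate is $w_{pd}(\tilde a) = \sum_{e \mid pd} e\,\tilde a_e^{pd/e}$ by Lemma~\ref{frobenius}. The $d$th ghost coordinate of $\mathbb{W}_S(\phi_p)(\tilde a)$ is $w_d(\phi_p(\tilde a)) = \sum_{e \mid d} e\,\phi_p(\tilde a_e)^{d/e} = \sum_{e \mid d} e\,\tilde a_e^{pd/e}$, and the two expressions differ precisely by the terms indexed by divisors $e$ of $pd$ that do not divide $d$; as noted in the proof of Dwork's lemma, those satisfy $v_p(e) = v_p(pd) \geqslant 1$, so the corresponding summands $e\,\tilde a_e^{pd/e}$ lie in $p\tilde A$. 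Hence $F_p^w(w(\tilde a)) \equiv w_d(\phi_p(\tilde a)) \bmod p\tilde A$ in each coordinate, which is to say that $F_p(\tilde a)$ and $R_{S/p}^S(\mathbb{W}_S(\phi_p)(\tilde a))$ have ghost coordinates congruent modulo $p$ — but this is weaker than what we want over $\tilde A$, so the congruence must be promoted to an equality after reduction mod $p$.

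So the final step is to descend back to $A = \F_p[a_n \mid n \in S]$: reducing the vectors $F_p(\tilde a)$ and $R_{S/p}^S(\mathbb{W}_S(\phi_p)(\tilde a))$ modulo $p$, their ghost coordinates become equal in $A^{S/p}$, and since $A$ is torsion free — wait, $A$ is an $\F_p$-algebra, so I instead argue directly that the two \emph{Witt vectors} over $A$ are equal. Indeed $\phi_p \colon \tilde A \to \tilde A$ reduces mod $p$ to the Frobenius endomorphism $\varphi \colon A \to A$ (since $\phi_p(a_n) = a_n^p$ reduces to $a_n^p = \varphi(a_n)$ and both sides are ring maps determined on generators), so $\mathbb{W}_S(\phi_p) \bmod p = \mathbb{W}_S(\varphi)$, and likewise $F_p$ over $\tilde A$ reduces to $F_p$ over $A$ by naturality. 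Thus it remains to check that $F_p(a) = R_{S/p}^S(\mathbb{W}_S(\varphi)(a))$ \emph{as vectors}, not just on ghost coordinates. The cleanest route is: the components of $\mathbb{W}_S(\varphi)(a)$ are $(\varphi(a_n) \mid n \in S) = (a_n^p \mid n \in S)$, and I claim the $d$th component of $F_p(a)$ equals $a_d^p$ for all $d \in S/p$. This last claim is the universal formula $f_{p,d} \equiv a_d^p \bmod p$ in $\Z[a_n \mid n\in S]$, which one extracts from the ghost computation above together with the inductive construction in Dwork's lemma — this verification that the Frobenius polynomials reduce to $p$th powers mod $p$ is the main technical obstacle, and I would present it as the key lemma, essentially repeating the inductive argument in the proof of Lemma~\ref{dwork} while tracking the congruence class mod $p$. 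Once that is in hand the stated equality follows over $A$ and hence, by naturality, over every $\F_p$-algebra.
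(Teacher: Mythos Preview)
Your proposal eventually converges to exactly the paper's argument: the statement is equivalent to the congruence $f_{p,n} \equiv a_n^p \bmod p$ in $\Z[a_n \mid n \in S]$, which is then proved by induction on $n$ using the estimate from Dwork's lemma. The initial detour through ghost coordinates over $\F_p[a_n]$ is unnecessary---the paper observes directly that the lemma is equivalent to this polynomial congruence and carries out the induction you sketch at the end.
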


\begin{proof}By definition $F_p(a) = (f_{p,e}(a) \mid e \in S/p)$ with
the elements $f_{p,e}$ of the free ring on $\{a_n \mid n \in S\}$
characterized by the system of equations 
$$\sum_{e \mid m} e f_{p,e}^{m/e} = \sum_{e \mid pm} ea_e^{pm/e}$$
indexed by $m \in S/p$. The lemma is equivalent to the statement that
for all $m \in S/p$, $f_{p,m} \equiv a_m^p$ modulo $p$, which we
proceed to prove by induction on $m \in S/p$. Since $f_{p,1} = a_1^p +
pa_p$, the statement holds for $m = 1$. So we let $m > 1$ and assume
that for all proper divisors $e$ of $m$, $f_{p,e} \equiv a_e^p$ modulo
$p$. This implies that $\smash{ ef_{p,e}^{m/e} \equiv ea_e^{pm/e} }$ modulo
$\smash{ p^{v_p(m)+1} }$ by the argument at the beginning of the proof
of Lemma~\ref{dwork}. We now write
$$\sum_{e \mid m} ef_{p,e}^{m/e} = \sum_{e \mid m} ea_e^{pm/e} + \sum_{e \mid
pm, e \nmid m} ea_e^{pm/e}$$
and note that if $e \mid pm$ and $e \nmid m$, then $v_p(e) =
v_p(m) + 1$. Therefore, we may conclude that 
$m f_{p,m} \equiv m a_m^p$
modulo $p^{v_p(m)+1}A$. But the free ring on $\{a_n \mid n \in S\}$ is
torsion free, so $f_{p,m} \equiv a_m^p$ modulo $p$ as desired. This
completes the proof.
\end{proof}

\begin{lemma}\label{invertible}Let $m$ be an integer, let $A$ be a
ring, and let $S$ be a truncation set. If $m$ is invertible in $A$,
then $m$ is invertible in $\mathbb{W}_S(A)$; and if $m$ is a
non-zero-divisor in $A$, then $m$ is a non-zero-divisor in
$\mathbb{W}_S(A)$.
\end{lemma}

\begin{proof}As in the proof of Proposition~\ref{W_S(Z)}, we may
assume that $S$ is finite. We proceed by induction on $S$ beginning
from the trivial case $S = \emptyset$. So let $S$ be non-empty and
assume the statement for all proper sub-truncation sets of $S$. We let
$n \in S$ be maximal, and let $T = S
\smallsetminus \{n\}$. In this situation, we have exact sequence
$$\xymatrix{
{ 0 } \ar[r] &
{ \mathbb{W}_{\{1\}}(A) } \ar[r]^-{V_n} &
{ \mathbb{W}_S(A) } \ar[r]^-{R_T^S} &
{ \mathbb{W}_T(A) } \ar[r] &
{ 0 } \cr
}$$
from which the induction step readily follows, since
$\mathbb{W}_{\{1\}}(A) = A$. 
\end{proof}

Let $p$ be prime number. We say that a sub-truncation set of the
truncation set
$$P = \{1, p, p^2, \dots\} \subset \N$$
is a $p$-typical truncation set. For instance, if $S$ is any truncation
set, then $S \cap P$ is a $p$-typical truncation set. The
$p$-typical truncation sets $T \subset P$ are $T = \emptyset$, $T
= P$, and $T = \{1, p, \dots, p^{n-1}\}$, where $n$ is a positive integer.
The ring
$\mathbb{W}_P(A)$ is called the ring of $p$-typical Witt vectors
and the ring $\mathbb{W}_{\{1,p,\dots,p^{n-1}\}}(A)$ is called the ring of
$p$-typical Witt vectors of length $n$ in $A$. 

\begin{prop}\label{wittptypicaldecomposition}Let $p$ be a prime
number, let $S$ be a truncation set, and let $I(S)$ be the set of $k
\in S$ not divisible by $p$. If $A$ is a ring in which every 
$k \in I(S)$ is invertible, then the ring homomorphism 
$$\xymatrix{
{ \mathbb{W}_S(A) } \ar[r]^-{g} &
{ \displaystyle{ \prod_{k \in I(S)} \mathbb{W}_{(S/k) \cap P}(A) } } \cr
}$$
whose $k$th component is $g_k = R_{(S/k) \cap P}^{S/k} \circ F_k$ is
an isomorphism.
\end{prop}

\begin{proof}We have a commutative diagram of ring homomorphisms
$$\begin{xy}
(-16,8)*+{ \mathbb{W}_S(A) }="a";
(16,8)*+{ \displaystyle{ \prod \mathbb{W}_{(S/k) \cap P}(A) } }="b";
(-16,-8)*+{ A^S }="c";
(16,-8)*+{ \displaystyle{ \prod A^{(S/k) \cap P} } }="d";
{ \ar^-{g} "b"; "a";};
{ \ar^-{w} "c"; "a";};
{ \ar^-{\prod w} "d";"b";};
{ \ar^-{g^w} "d";"c";};
\end{xy}$$
where the products on the right-hand side range over $k \in I(S)$ and
where $g^w$ is the map whose $k$th component $g_k^w$ is given by
$g_k^w(a)_{p^v} = a_{p^vk}$. The map $g^w$ is a bijection since the
sets $S \cap kP$ with $k \in I(S)$ partition $S$ and since the maps $S \cap
kP \to (S/k) \cap P$ that take $p^vk$ to $p^v$ are bijections. Let
$h^w$ be the inverse of $g^w$. We claim that there exists a natural
function
$h \colon \prod \mathbb{W}_{(S/k) \cap P}(A) \to \mathbb{W}_S(A)$ such
that $w \circ h = h^w \circ (\prod w)$. Granting this, the equalities
$g^w \circ h^w = \id$ and $h^w \circ g^w = \id$ imply that $g \circ h
= \id$ and $h \circ g = \id$, which proves the proposition.

To prove the claim, it suffices to show that, in the universal case,
where $A$ is the free $\Z[I(S)^{-1}]$-algebra generated by
$\{a_{k,p^v} \mid k \in I(S), p^v \in (S/k) \cap P\}$ and $a = (a_k)$
with $a_k = (a_{k,p^v}) \in \mathbb{W}_{(S/k) \cap P}(A)$, the element
$x = (h^w \circ (\prod w))(a)$ is in the image of $w \colon
\mathbb{W}_S(A) \to A^S$. The unique $\Z[I(S)^{-1}]$-algebra
homomorphism $\phi_p \colon A \to A$ that to $a_{k,p^v}$ associates
$a_{k,p^v}^p$ is a lift of the Frobenius of $A/pA$. Moreover, all
prime numbers $\ell \in S$ different from $p$ are invertible in
$A$. Therefore, we conclude from Lemma~\ref{dwork} that the sequence
$x = \langle x_n \mid n \in S\rangle$ is in the image of the ghost map
if and only if for all $n = p^vk \in S$ with $k \in I(S)$ and $v
\geqslant 1$, $x_{p^vk} \equiv \phi_p(x_{p^{v-1}k})$ modulo
$p^vA$. But $x_{p^vk} = w_{p^v}(a_k)$ and $\phi_p(x_{p^{v-1}k}) =
\phi_p(w_{p^{v-1}}(a_k))$ which are congruent modulo $p^vA$ by
Lemma~\ref{dwork}. Hence, there exists a vector $h(a) \in \mathbb{W}_S(A)$
such that $x = w(h(a))$ and this vector is unique, as $A$ is torsion
free. The vector $h(a)$, in turn, uniquely determines the desired
natural map $h$. This completes the proof. 
\end{proof}

\begin{example}\label{bigwittlengthn}If $S = \{1, 2, \dots, n\}$, then $(S/k) \cap P = \{1, p, \dots, p^{s-1} \}$ where $s =
  s(n,k)$ is the unique integer with $p^{s-1}k \leqslant n < p^s
  k$. Hence, if every integer $1 \leqslant k \leqslant n$
not divisible by $p$ is invertible in $A$, then
Proposition~\ref{wittptypicaldecomposition} gives an isomorphism
$$\xymatrix{
{ {}^{\phantom{()}}\mathbb{W}_{\{1,2,\dots,n\}}(A) } \ar[r]^-{\sim} &
{ \displaystyle{ \prod \; \mathbb{W}_{\{1,p,\dots,p^{s-1}\}}(A) } } \cr
}$$
where the product ranges over integers $1 \leqslant k \leqslant n$ not divisible
by $p$ and $s = s(n,k)$.
\end{example}

\begin{lemma}\label{VFp}If $A$ is an $\Fp$-algebra, then for every
truncation set $S$,
$$V_p \circ F_p = p \cdot \id \colon
\mathbb{W}_S(A) \to \mathbb{W}_S(A).$$ 
\end{lemma}

\begin{proof}We first reduce to the case where $S$ is a
$p$-typical truncation set. It follows from
Lemma~\ref{variousrelations} that the following diagram, where the
products range over $k \in I(S)$ and where the vertical maps are the
isomorphisms of Proposition~\ref{wittptypicaldecomposition},
commutes,
$$\begin{xy}
(-38,8)*+{ \mathbb{W}_S(A) }="a";
(0,8)*+{\mathbb{W}_{S/p}(A)}="b";
(38,8)*+{\mathbb{W}_S(A)}="c";
(-38,-8)*+{ \displaystyle{ \prod \mathbb{W}_{(S/k) \cap
      P}(A) } }="d";
(0,-8)*+{ \displaystyle{ \prod \mathbb{W}_{(S/pk) \cap
      P}(A) } }="e";
(38,-8)*+{ \displaystyle{ \prod \mathbb{W}_{(S/k) \cap
      P}(A). } }="f";
{ \ar^-{F_p} "b"; "a";};
{ \ar^-{V_p} "c"; "b";};
{ \ar^-{g} "d";"a";};
{ \ar^-{g} "e";"b";};
{ \ar^-{g} "f";"c";};
{ \ar^-{\prod F_p} "e";"d";};
{ \ar^-{\prod V_p} "f";"e";};
\end{xy}$$
Accordingly, it will suffice to prove the lemma for $p$-typical
truncation sets $S$, and we may further assume that $S$ is finite. 
It follows from Lemma~\ref{variousrelations}~(iii) that
$$V_p \circ F_p = V_p([1]_{S/p}) \cdot \id \colon \mathbb{W}_S(A) \to
\mathbb{W}_S(A)$$
and we proceed to prove that $V_p([1]_{S/p}) = p[1]_S$ by induction on
the cardinality $n$ of $S$. The case $n = 0$ holds trivially, so we
let $S = \{1, p, \dots, p^{n-1}\}$ be the $p$-typical truncation set
of cardinality $n> 0$ and assume that the identity in question has
been proved for all proper sub-truncation sets $T \subset S$. The
exact sequences
$$\xymatrix{
{ 0 } \ar[r] &
{ \mathbb{W}_{\{1\}}(A) } \ar[r]^-{V_{p^{n-1}}} &
{ \mathbb{W}_S(A) } \ar[r]^-{R_{S/p}^S} &
{ \mathbb{W}_{S/p}(A) } \ar[r] &
{ 0 } \cr
}$$
furnish an induction argument showing that $\mathbb{W}_S(A)$ is
annihilated by $p^n$. In particular, $V_p([1]_{S/p})$ is annihilated by
$p^{n-1}$. Moreover, it follows from Addendum~\ref{teichmuller} that
$$[p]_S = p [1]_S + \sum_{0 < s < n} \frac{1}{p^s}(p^{p^s} -
p^{p^{s-1}}) V_{p^s}([1]_{S/p^s})$$
and the left-hand side vanishes, since $A$ is an $\Fp$-algebra. The
inductive hypothesis shows that $V_{p^s}([1]_{S/p^s}) = 
p^{s-1}V_p([1]_{S/p})$, so the formula above becomes
$$0 = p[1]_S + (p^{p^{n-1}-1} - 1)V_p([1]_{S/p}).$$
But $p^{n-1}-1 \geqslant n-1$, so $V_p([1]_{S/p}) = p[1]_S$ which proves
the induction step. 
\end{proof}

Let $A$ be a $p$-torsion free ring equipped with a ring homomorphism
$\phi \colon A \to A$ such that $\phi(a) \equiv a^p$ modulo $pA$. By
Lemma~\ref{dwork}, there is a unique ring homomorphism
$$\lambda_{\phi} \colon A \to \mathbb{W}_P(A)$$
such that $w_{p^n} \circ \lambda_{\phi} = \phi^n$.  We define
$s_{\phi} \colon A \to \mathbb{W}_P(A/pA)$ to be the composition of
$\lambda_{\phi}$ and the map induced by the canonical projection of
$A$ onto $A/pA$. We recall that $A/pA$ is said to be perfect, if
the Frobenius $\varphi \colon A/pA \to A/pA$ is an automorphism.

\begin{prop}\label{padicring}Let $p$ be a prime number, let $n$ be a
non-negative integer, and let $S$ be the finite $p$-typical truncation
set of cardinality $n$. Let $A$ be a $p$-torsion free ring
equipped with a ring homomorphism $\phi \colon A \to A$ such that
$\phi(a) \equiv a^p$ modulo $pA$ and suppose that $A/pA$ is
perfect. In this situation, the map $s_{\phi}$ induces an isomorphism
$$\xymatrix{
{ A/p^nA } \ar[r]^-{\bar{s}_{\phi}} &
{ \mathbb{W}_S(A/pA). } \cr
}$$
\end{prop}

\begin{proof}We claim that the map $s_{\phi}$ induces a map
$\bar{s}_{\phi}$ as stated. Indeed, the restriction map 
$R_S^P \colon \mathbb{W}_P(A/pA) \to \mathbb{W}_S(A/pA)$ has kernel
$V_{p^n}\mathbb{W}_P(A/pA)$, and
$$V_{p^n}\mathbb{W}_P(A/pA) = V_{p^n}\mathbb{W}_P(\varphi^n(A/pA)) 
= V_{p^n}F_{p^n}\mathbb{W}(A/pA) = p^n\mathbb{W}_P(A/pA),$$
where the left-hand equality follows from $A/pA$ being perfect, the
middle equality from Lemma~\ref{modpfrobenius}, and the right-hand
equality from Lemma~\ref{VFp}. Now, the proof is completed by an induction
argument based on the commutative diagram
$$\begin{xy}
(-48,8)*+{ 0 }="a";
(-28,8)*+{ A/pA }="b";
(0,8)*+{ A/p^nA }="c";
(28,8)*+{ A/p^{n-1}A }="d";
(48,8)*+{ 0 }="e";
(-48,-8)*+{ 0 }="f";
(-28,-8)*+{ A/pA }="g";
(0,-8)*+{ \mathbb{W}_S(A/pA) }="h";
(28,-8)*+{ \mathbb{W}_{S/p}(A/pA) }="i";
(48.5,-8)*+{ 0, }="j";
{\ar "b";"a";};
{\ar^-{p^{n-1}} "c";"b";};
{\ar^-{\operatorname{pr}} "d";"c";};
{\ar^-{\varphi^{n-1}} "g";"b";};
{\ar^-{\bar{s}_{\phi}} "h";"c";};
{\ar^-{\bar{s}_{\phi}} "i";"d";};
{\ar "e";"d";};
{\ar "g";"f";};
{\ar^-{V_{p^{n-1}}} "h";"g";};
{\ar^-{R_{S/p}^S} "i";"h";};
{\ar "j";"i";};
\end{xy}$$
where the top horizontal sequence is exact since $A$ is $p$-torsion
free, and where the left-hand vertical map is an isomorphism since $A/pA$
is perfect.
\end{proof}

We return to the ring of big Witt vectors. We prove that the
underlying additive group of the ring $\mathbb{W}(A)$ is naturally
isomorphic to the multiplicative group
$$\Lambda(A) = (1 + tA[\![t]\!])^*$$
of power series with constant term $1$. We
also view the set $tA[\![t]\!]$ of power series with constant term $0$
as an abelian group under coefficientwise addition. We recall the
following result from~\cite[Section~1]{cartier}; see
also~\cite[Proposition~17.2.9]{hazewinkel}.

\begin{prop}\label{changeofcoordinates}The diagram of natural group
homomorphisms
$$\xymatrix{
{ \mathbb{W}(A) } \ar[r]^-{\gamma} \ar[d]^{w} &
{ \Lambda(A) } \ar[d]^{t\frac{d}{dt}\log} \cr
{ A_{\phantom{\N}}^{\N} } \ar[r]^-{\gamma^w} &
{ tA[\![t]\!], } \cr
}$$
where $\gamma(a_1,a_2,\dots ) = \prod_{n \geqslant 1} (1 -
a_nt^n)^{-1}$ and $\gamma^w\langle x_1,x_2,\dots \rangle = \sum_{n
  \geqslant 1} x_nt^n$,
commutes, and the horizontal maps are isomorphisms.
\end{prop}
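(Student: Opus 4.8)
The plan is to establish the commutativity of the diagram first and then deduce that $\gamma$ is an isomorphism from the fact that $\gamma^w$ is one, using the known structure of the ghost map. First I would check that $\gamma^w$ is an isomorphism of abelian groups: this is immediate, since $tA[\![t]\!]$ is the product $\prod_{n\geqslant 1}A$ with componentwise addition, and $\gamma^w$ is visibly that identification. Next I would verify that the square commutes on the nose. Taking $f = \gamma(a_1,a_2,\dots) = \prod_{n\geqslant 1}(1-a_nt^n)^{-1}$, the operator $t\frac{d}{dt}\log$ turns the product into a sum, and a term-by-term computation gives
$$t\frac{d}{dt}\log\prod_{n\geqslant 1}(1-a_nt^n)^{-1}
= \sum_{n\geqslant 1}\frac{na_nt^n}{1-a_nt^n}
= \sum_{n\geqslant 1}\sum_{k\geqslant 1}na_n^kt^{nk}
= \sum_{m\geqslant 1}\Big(\sum_{d\mid m}da_d^{m/d}\Big)t^m,$$
and the coefficient of $t^m$ here is exactly the ghost component $w_m(a_1,a_2,\dots)$. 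Hence $t\frac{d}{dt}\log\circ\gamma = \gamma^w\circ w$, which is the required commutativity.

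It remains to show that $\gamma$ is a homomorphism of abelian groups and a bijection. For the homomorphism property, one can proceed as in the proof of Proposition~\ref{ringstructure}: it suffices to treat the universal case $A = \Z[a_n,b_n\mid n\in\N]$, which is torsion free, so that the ghost map $w$ is injective; then the commutative square together with the additivity of $\gamma^w$ and of $t\frac{d}{dt}\log$ (the latter being a group homomorphism from $(1+tA[\![t]\!])^*$ to $tA[\![t]\!]$, valid over any ring) forces $\gamma$ to be additive on the universal ring, and naturality propagates this to all $A$. For bijectivity, I would argue directly on power series: given $f\in(1+tA[\![t]\!])^*$, one solves for the $a_n$ recursively by comparing coefficients in $f = \prod_{n\geqslant 1}(1-a_nt^n)^{-1}$, noting that the coefficient of $t^n$ in the product determines $a_n$ uniquely once $a_1,\dots,a_{n-1}$ are fixed, since all other contributions to that coefficient involve only $a_1,\dots,a_{n-1}$. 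This shows $\gamma$ is a bijection of sets, and combined with additivity it is an isomorphism of abelian groups.

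The main obstacle, such as it is, is purely bookkeeping: one must be careful that the recursive solvability argument for bijectivity of $\gamma$ does not secretly require any denominators, and indeed it does not — the coefficient of $t^n$ in $\prod(1-a_mt^m)^{-1}$ equals $a_n$ plus a polynomial with integer coefficients in $a_1,\dots,a_{n-1}$, so the recursion works over an arbitrary commutative ring $A$. An alternative, cleaner route to additivity avoids the universal-ring reduction entirely: one can cite the classical fact that the group structure on $\mathbb{W}(A)$ transported through $\gamma$ is by definition multiplication of power series, but since the excerpt defines $\mathbb{W}(A)$ via the ghost map rather than via power series, the torsion-free reduction argument above is the self-contained option consistent with the paper's development. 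I expect the whole proof to be short, with the coefficient identity $t\frac{d}{dt}\log(1-a_nt^n)^{-1} = \sum_{k\geqslant 1}na_n^kt^{nk}$ doing essentially all the work.
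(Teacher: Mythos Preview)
Your proposal is correct and follows essentially the same route as the paper: verify commutativity of the square by the logarithmic-derivative calculation, establish bijectivity of $\gamma$ by recursion on coefficients, and deduce additivity of $\gamma$ by reducing to the torsion-free case where the vertical maps are injective. One small point of precision: in the additivity step, the injectivity you actually need to invoke is that of the right vertical map $t\frac{d}{dt}\log$ (which is injective for torsion-free $A$ since $f' = 0$ forces $f = 1$), not of the ghost map $w$; the paper notes that both vertical maps are injective in this case, and it is the right-hand one that lets you cancel and conclude $\gamma(a+b) = \gamma(a)\gamma(b)$.
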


\begin{proof}It is clear that the maps in the diagram are natural
transformations of functors from the category of rings to the category 
of sets. Moreover, the calculation
$$\begin{aligned}
{}  t \frac{d}{dt}\log ( \prod_{e \geqslant 1} (1 - a_et^e)^{-1})
{} & = - \sum_{e \geqslant 1} t \frac{d}{dt}\log(1 - a_et^e) 
{} = \sum_{e \geqslant 1} \frac{ea_et^e}{1 - a_et^e} \cr
{} & = \sum_{e \geqslant 1} \sum_{q \geqslant 1} ea_e^q t^{qe} 
{} = \sum_{n \geqslant 1} \big( \sum_{e \mid n} ea_e^{n/e} \big) t^n
\cr
\end{aligned}$$
shows that the diagram commutes. It is also clear that the two
vertical maps are group homomorphisms and that the map $\gamma^w$ is
an isomorphism of abelian groups. This implies that the map $\gamma$ 
is a group homomorphism. Indeed, if $A$ is torsion free, then the
vertical maps both are injective, and in general, we choose a
surjective ring homomorphism $A' \to A$ from a torsion free ring and
use that $\mathbb{W}(-)$ and $\Lambda(-)$ both take surjective ring
homomorphisms to surjective group homorphisms.

It remains to show that $\gamma$ is a bijection. To this end, we write
$$\prod_{n \geqslant 1} (1 - a_nt^n)^{-1} = (1 + b_1t + b_2t^2 + \dots)^{-1}$$
where the coefficient $b_n$ is given by the sum
$b_n = \sum (-1)^r a_{i_1} \dots a_{i_r}$
that ranges over all $1 \leqslant i_1 < \dots < i_r \leqslant n$ such
that $i_1 + 2i_2 + \dots +ri_r = n$. It follows that the Witt
coordinates $a_n$ are uniquely determined, recursively, by the
coefficients $b_n$, and hence, that $\gamma$ is a bijection as
stated.
\end{proof}

\begin{remark}\label{signs}We will always consider the set
$\Lambda(A) = 1 + tA[\![t]\!]$ as a ring with the unique ring
structure that makes the map $\gamma \colon \mathbb{W}(A) \to
\Lambda(A)$ a ring isomorphism. This ring structure 
is characterized by begin natural in $A$, by addition being given by
power series multiplication, and by the product satisfying
$$(1-at)^{-1} * (1-bt)^{-1} = (1-abt)^{-1}$$
for all $a,b \in A$; compare~\cite[Section~4]{grothendieck1}. We note
that $(1-t)^{-1}$ is the multiplicative unit element in
$\Lambda(A)$. The reader is warned, however, that there exists four
different ring structures on the set $1 + tA[\![t]\!]$ satisfying the
first two of these requirements but with the last requirement replaced
by the four possible choices of signs in the product formula $(1 \pm
at)^{\pm1} * (1 \pm bt)^{\pm1} = (1 \pm abt)^{\pm1}$. The choice $++$
is used in~\cite{grothendieck1,atiyahtall,SGA6}, while the choice $-+$
is used in~\cite[Section~17.2]{hazewinkel}. The four different rings
$\Lambda(A)_{\pm\pm}$ are all naturally isomorphic, the
natural isomorphism $u_{\pm\pm} \colon \Lambda(A) \to
\Lambda(A)_{\pm\pm}$ given by $u_{\pm\pm}(f(t)) = (1\pm t)^{\pm1} *
f(t)$, where the product is evaluated in $\Lambda(A)$. We also write
$\gamma_{\pm\pm} \colon \mathbb{W}(A) \to \Lambda(A)_{\pm\pm}$ for the
natural ring isomorphism $\gamma_{\pm\pm} = u_{\pm\pm} \circ \gamma$;
in particular, $\gamma = \gamma_{--}$. 
\end{remark}

\begin{addendum}\label{changeofcoordinatesS}The map $\gamma$ induces
an isomorphism of abelian groups
$$\xymatrix{
{ \mathbb{W}_S(A) } \ar[r]^-{\gamma_S} &
{ \Lambda_S(A) } \cr
}$$
where $\Lambda_S(A)$ is the quotient of the multiplicative group
$\Lambda(A) = (1 + tA[\![t]\!])^*$ by the subgroup $I_S(A)$ of all power
series of the form $\prod_{n \in \N \smallsetminus S} (1 - a_nt^n)^{-1}$.
\end{addendum}

\begin{proof}The kernel of the restriction map $R_S^{\N} \colon
\mathbb{W}(A) \to \mathbb{W}_S(A)$ is equal to the subset of all
vectors $a = (a_n \mid n \in \N)$ such that $a_n = 0$, if $n \in
S$. The image of this subset by the map $\gamma$ is the subset $I_S(A)
\subset \Lambda(A)$.
\end{proof}

\begin{example}\label{bigwittlengthm}If $S = \{1,2,\dots,n\}$, then
$I_S(A) = (1 + t^{n+1}A[\![t]\!])^*$. Hence, in this case,
Addendum~\ref{changeofcoordinatesS} gives an isomorphism of abelian
groups
$$\xymatrix{
{ \mathbb{W}_{\{1,2,\dots,n\}}(A) } \ar[r]^-{\gamma_S} &
{ (1 + tA[\![t]\!])^*/(1 + t^{n+1}A[\![t]\!])^*. } \cr
}$$
For $A$ a $\Z_{(p)}$-algebra, the structure of this group was examined
in Example~\ref{bigwittlengthn}.
\end{example}

\begin{lemma}\label{pfrobenius}Let $A$ be an arbitrary ring. For every prime
number $p$, the natural ring homomorphism
$F_p \colon \mathbb{W}(A) \to \mathbb{W}(A)$
satisfies that $F_p(a) \equiv a^p$ modulo $p\mathbb{W}(A)$.
\end{lemma}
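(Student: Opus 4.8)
The plan is to reduce to a universal situation and then verify the assertion in ghost coordinates by means of Dwork's lemma. First I would pass to the polynomial ring $A = \Z[a_n \mid n \in \N]$ with $a = (a_n \mid n \in \N) \in \mathbb{W}(A)$ the tautological vector. Given any ring $A'$ and any $a' \in \mathbb{W}(A')$ there is a unique ring homomorphism $f \colon A \to A'$ with $\mathbb{W}(f)(a) = a'$, and since $F_p$ is natural and $\mathbb{W}(f)$ is a ring homomorphism, it suffices to produce $b \in \mathbb{W}(A)$ with $F_p(a) - a^p = p\,b$. The ring $A$ is a domain, hence torsion free, so the ghost map $w \colon \mathbb{W}(A) \to A^{\N}$ is injective; it therefore suffices to find a sequence $y = \langle y_n \mid n \in \N\rangle$ in the image of $w$ with $p\,y = w(F_p(a)) - w(a^p)$, and then $b = w^{-1}(y)$ does the job.

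By Lemma~\ref{frobenius} we have $w_n(F_p(a)) = w_{pn}(a)$, while $w_n(a^p) = w_n(a)^p$ since $w$ is multiplicative; for each prime $q$ let $\phi_q \colon A \to A$ be the ring homomorphism with $\phi_q(a_k) = a_k^q$, so that $\phi_q(x) \equiv x^q$ modulo $qA$, as in the proof of Proposition~\ref{ringstructure}. Since $a$ is a Witt vector, the sequence $w(a)$ satisfies the congruences of Lemma~\ref{dwork}; applied with the prime $p$ at the integer $pn$ these give $w_{pn}(a) \equiv \phi_p(w_n(a)) \equiv w_n(a)^p$ modulo $pA$, so $w_{pn}(a) - w_n(a)^p$ is divisible by $p$, and we let $y_n \in A$ be the unique element with $p\,y_n = w_{pn}(a) - w_n(a)^p$. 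To show that $y$ lies in the image of $w$, I would verify the congruences of Lemma~\ref{dwork} for $y$: fix a prime $q$ and an integer $n$ with $v_q(n) \geqslant 1$; using that $p\,\phi_q(y_{n/q}) = \phi_q(w_{pn/q}(a)) - \phi_q(w_{n/q}(a))^p$, one gets
$$p\,(y_n - \phi_q(y_{n/q})) = \big(w_{pn}(a) - \phi_q(w_{pn/q}(a))\big) - \big(w_n(a)^p - \phi_q(w_{n/q}(a))^p\big).$$

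If $q \neq p$, then $v_q(pn) = v_q(n)$ and both summands on the right lie in $q^{v_q(n)}A$ — the first by the congruence of Lemma~\ref{dwork} for $a$ at $pn$, the second by raising the congruence $w_n(a) \equiv \phi_q(w_{n/q}(a))$ modulo $q^{v_q(n)}A$ to the $p$th power — so that $p\,(y_n - \phi_q(y_{n/q})) \in q^{v_q(n)}A$, and since $p$ is invertible modulo $q$ it follows that $y_n \equiv \phi_q(y_{n/q})$ modulo $q^{v_q(n)}A$. If $q = p$, then $v_p(pn) = v_p(n)+1$, and both summands now lie in $p^{v_p(n)+1}A$: the first, which equals $w_{pn}(a) - \phi_p(w_n(a))$, by the congruence of Lemma~\ref{dwork} for $a$ at $pn$; and the second because $w_n(a) \equiv \phi_p(w_{n/p}(a))$ modulo $p^{v_p(n)}A$ with $v_p(n) \geqslant 1$ implies, via the elementary estimate $u^p - v^p \in p\,(u-v)A$ whenever $u \equiv v$ modulo $pA$, that $w_n(a)^p \equiv \phi_p(w_{n/p}(a))^p$ modulo $p^{v_p(n)+1}A$. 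Hence $p\,(y_n - \phi_p(y_{n/p})) \in p^{v_p(n)+1}A$, and cancelling the non-zero-divisor $p$ yields $y_n \equiv \phi_p(y_{n/p})$ modulo $p^{v_p(n)}A$. By Lemma~\ref{dwork} we conclude $y = w(b)$ for a unique $b \in \mathbb{W}(A)$, so $F_p(a) - a^p = p\,b$, and the reduction of the first paragraph completes the proof.

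The step I expect to be the main obstacle is the case $q = p$ above. Bounding the term $w_{pn}(a) - \phi_p(w_n(a))$ by appealing to a congruence of Lemma~\ref{dwork} for the Witt vector $F_p(a)$ would only give divisibility by $p^{v_p(n)}$, one power of $p$ short of what is needed; the remedy is to rewrite everything directly in terms of $a$, so that the relevant congruence is read off at $pn$ — where the $p$-adic valuation equals $v_p(n)+1$ rather than $v_p(n)$ — together with the gain of one power of $p$ that results from taking $p$th powers of a congruence modulo $p^vA$ with $v \geqslant 1$.
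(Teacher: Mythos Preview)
Your proof is correct and follows essentially the same approach as the paper's: reduce to the universal polynomial ring, define $y_n$ by $p\,y_n = w_n(F_p(a)-a^p)$, and verify the Dwork congruences for $y$ prime by prime, with the crucial $q=p$ case handled by reading the congruence for $a$ at $pn$ (gaining the extra power of $p$) together with the $p$th-power lift. The only cosmetic difference is that the paper expands $w_n(F_p(a)-a^p)$ explicitly as $\sum_{d\mid pn} d\,a_d^{pn/d} - \big(\sum_{d\mid n} d\,a_d^{n/d}\big)^p$ and argues with divisors, whereas you keep everything packaged as $w_{pn}(a)-w_n(a)^p$ and invoke the Dwork congruences for $a$ directly; your version is a bit cleaner but the substance is identical.
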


\begin{proof}By naturality, it suffices to consider 
$A = \Z[a_1,a_2,\dots]$ and $a = (a_1,a_2,\dots)$ and show that there
exists $b \in \mathbb{W}(A)$ with $F_p(a) - a^p = pb$. We have
$$w_n(F_p(a) - a^p) = \sum_{e \mid pn} ea_e^{pn/e} - \big( \sum_{e
\mid n} ea_e^{n/e}\big)^p$$
which clearly is congruent to zero modulo $pA$. So we let
$x = \langle x_n \mid n \in \N \rangle$ with
$$x_n = \frac{1}{p}w_n(F_p(a) - a^p)$$
and employ Lemma~\ref{dwork} to show that $x = w(b)$ with $b \in
\mathbb{W}(A)$. To this end, we must show that for every prime number
$\ell$ and every $n \in \ell\N$,
$$x_n \equiv \phi_{\ell}(x_{n/\ell})$$
modulo $\ell^{v_{\ell}(n)}A$, where $\phi_{\ell} \colon A \to A$ is the
unique ring homomorphism that takes $a_n$ to $a_n^{\ell}$. The
congruence in question is equivalent to the statement that
$$w_n(F_p(a) - a^p) \equiv \phi_{\ell}(w_{n/\ell}(F_p(a) - a^p))$$
modulo $\ell^{v_{\ell}(n)}A$, if $\ell \neq p$ and $n \in \ell\N$, and 
modulo $p^{v_p(n)+1}A$, if $\ell = p$ and $n \in p\N$. If
$\ell \neq p$, the statement follows from Lemma~\ref{dwork}, and if
$\ell = p$ and $n \in p\N$, we find
$$\begin{aligned}
{} & w_n(F_p(a) - a^p) - \phi_p(w_{n/p}(F_p(a) - a^p)) \cr
{} & = \sum_{e \mid pn} ea_e^{pn/e}
- \big(\sum_{e \mid n} ea_e^{n/e}\big)^p
- \sum_{e \mid n} ea_e^{pn/e}
+ \big(\sum_{e \mid (n/p)} ea_e^{n/e}\big)^p. \cr
\end{aligned}$$
If $e \mid pn$ and $e \nmid n$, then $v_p(e) = v_p(n) + 1$, so 
$$\sum_{e \mid pn} ea_e^{pn/e} \equiv \sum_{e \mid n} ea_e^{pn/e}$$
modulo $p^{v_p(n)+1}A$. Similarly,
if $e \mid n$ and $e \nmid (n/p)$, then $v_p(e) = v_p(n)$, and hence,
$$\sum_{e \mid n} ea_e^{n/e} \equiv \sum_{e \mid (n/p)} ea_e^{n/e}$$
modulo $p^{v_p(n)}A$. But then
$$\big(\sum_{e \mid n} ea_e^{n/e}\big)^p \equiv
\big(\sum_{e \mid (n/p)} ea_e^{n/e}\big)^p$$
modulo $p^{v_p(n)+1}A$ as required; compare the proof of Lemma~\ref{dwork}.
\end{proof}

We next recall the following result of Cartier
from~\cite[Theorem~17.6.17]{hazewinkel}.

\begin{prop}\label{universallambdaoperation}There exists a unique
natural ring homomorphism
$$\Delta = \Delta_A \colon \mathbb{W}(A) \to \mathbb{W}(\mathbb{W}(A))$$
such that for every positive integer $n$,
$$w_n \circ \Delta = F_n \colon \mathbb{W}(A) \to \mathbb{W}(A).$$
In addition, the following diagrams, where
$\epsilon_A = w_1 \colon \mathbb{W}(A) \to A$, commute. 
$$\begin{xy}
(-48,7)*+{ \mathbb{W}(A) }="11";
(-25,7)*+{ \mathbb{W}(\mathbb{W}(A)) }="12";
(-2,7)*+{ \mathbb{W}(A) }="13";
(21,7)*+{ \mathbb{W}(\mathbb{W}(\mathbb{W}(A))) }="14";
(53,7)*+{ \mathbb{W}(\mathbb{W}(A)) }="15";
(-25,-7)*+{ \mathbb{W}(A) }="22";
(21,-7)*+{ \mathbb{W}(\mathbb{W}(A)) }="24";
(53,-7)*+{ \mathbb{W}(A) }="25";
{\ar_-{\epsilon_{\mathbb{W}(A)}} "11";"12";};
{\ar^-{\mathbb{W}(\epsilon_A)} "13";"12";};
{\ar_-{\Delta_A} "12";"22";};
{\ar@{=} "11";"22";};
{\ar@{=} "13";"22";};
{\ar_-{\Delta_{\mathbb{W}(A)}} "14";"15";};
{\ar_-{\mathbb{W}(\Delta_A)} "14";"24";};
{\ar_-{\Delta_A} "24";"25";};
{\ar_-{\Delta_A} "15";"25";};
\end{xy}$$
\end{prop}

\begin{proof}We first prove that a natural ring homomorphism as stated
exists. It suffices to prove that in the universal case $A =
\Z[a_1,a_2,\dots]$ and $a = (a_1,a_2,\dots)$, there exists an element
$\Delta_A(a) \in \mathbb{W}(\mathbb{W}(A)$ whose image by the ghost
map
$$w \colon \mathbb{W}(\mathbb{W}(A)) \to \mathbb{W}(A)^{\N}$$
is the sequence $\langle F_n(a) \mid n \in \N\rangle$. It follows from
Lemma~\ref{invertible} that, in this case, the ghost map is injective,
so the element $\Delta_A(a)$ necessarily is unique. Now
Lemmas~\ref{dwork} and~\ref{pfrobenius} show that the sequence
$\langle F_n(a) \mid n \in \N\rangle$ is in the image of the ghost map
if and only if for every prime number $p$ and $n \in p\N$, the
congruence
$$F_n(a) \equiv F_p(F_{n/p}(a)) \hskip5mm 
\text{modulo $p^{v_p(n)}\mathbb{W}(A)$}$$
holds. But in fact equality holds by
Lemma~\ref{variousrelations}~(iv), so we conclude that the 
desired element $\Delta_A(a)$ with $w_n(\Delta_A(a)) = F_n(a)$
exists. Hence, there exists a unique natural ring homomorphism
$\Delta$ such that $w_n \circ \Delta = F_n$ for every $n \in
\N$. Finally, one readily verifies the commutativity of the two
diagrams in the statement by evaluating the corresponding maps in
ghost coordinates.
\end{proof}

\begin{remark}\label{deltaremark}The map
$\Delta_n \colon \mathbb{W}(A) \to \mathbb{W}(A)$ given by the $n$th
Witt component of the map $\Delta$ is generally not a ring
homomorphism. For example, for a prime number $p$, the map $\Delta_p$
is the unique natural solution to the equation
$$F_p(a) = a^p + p\Delta_p(a).$$
We also note that the map $\Delta$ has the property that for all $a \in A$,
$$\Delta([a]) = [[a]].$$
Indeed, we may assume that $A = \Z[a]$, in which case the ghost map is
injective, and applying $w_n$ on both sides, we get $F_n([a]) = [a]^n$
which holds by Lemma~\ref{variousrelations}~(v).
\end{remark}

The natural transformation $\Delta$ in
Proposition~\ref{universallambdaoperation} is called the universal
$\lambda$-operation. Using it, we may restate Grothendieck's
definition of a $\lambda$-ring from~\cite{grothendieck1} as follows.

\begin{definition}\label{lambdaring}A $\lambda$-ring is a pair
$(A,\lambda)$ of a ring $A$ and a ring homomorphism $\lambda
\colon A \to \mathbb{W}(A)$ that makes the following diagrams
commute.
$$\begin{xy}
(0,7)*+{ A }="11";
(20,7)*+{ \mathbb{W}(A) }="12";
(45,7)*+{ \mathbb{W}(\mathbb{W}(A)) }="13";
(68,7)*+{ \mathbb{W}(A) }="14";
(20,-7)*+{ A }="22";
(45,-7)*+{ \mathbb{W}(A) }="23";
(68,-7)*+{ A }="24";
{\ar_-{\epsilon_A} "11";"12";};
{\ar@{=}@<+.25ex> "11";"22";};
{\ar_-{\lambda} "12";"22";};
{\ar_-{\Delta_A} "13";"14";};
{\ar_-{\mathbb{W}(\lambda)} "13";"23";};
{\ar_-{\lambda} "14";"24";};
{\ar_-{\lambda} "23";"24";};
\end{xy}$$
A morphism of $\lambda$-rings $f \colon (A,\lambda_A) \to
(B,\lambda_B)$ is a ring homomorphism $f \colon A \to B$ with the
property that $\lambda_B \circ f = \mathbb{W}(f) \circ \lambda_A$. 
\end{definition}

If $(A,\lambda)$ is a $\lambda$-ring $(A,\lambda)$, then we write
$\lambda_n \colon A \to A$ for the map that to $a$ assigns the
$n$th Witt component $\lambda_n(a)$ of the Witt vector
$\lambda(a)$. The map $\lambda_n$ is generally neither additive nor
multiplicative.

\begin{remark}\label{lambdaringremark}We recall the translation
between the above definition of a $\lambda$-ring and the original
definition by Grothendieck as stated in~\cite[Definition~V.2.4]{SGA6}
(or in~\cite{grothendieck1} and~\cite[Section~1]{atiyahtall}, where a
$\lambda$-ring is called a special $\lambda$-ring), 
emphasizing the choices of signs; see
also~\cite[E.2.1]{hazewinkel}. The commutativity of the 
diagrams in Proposition~\ref{universallambdaoperation} express that
the triple $(\mathbb{W}(-),\Delta,\epsilon)$ is a comonad on the
category of commutative rings, and the commutativity of the diagrams
in Definition~\ref{lambdaring} express that the pair $(A,\lambda)$ is a
coalgebra over this comonad. Similarly, in the original definition, a
$\lambda$-ring is defined to
be a coalgebra $(A,\lambda_t)$ over the comonad
$(\Lambda(-)_{++},\Delta_t,\epsilon_t)$, where $\Lambda(-)_{++}$ is the
functor from the category of commutative rings to itself defined in
Remark~\ref{signs}; $\epsilon_{t,A} \colon \Lambda(A)_{++} \to
A$ is the natural ring homomorphism defined by
$\epsilon_{t,A}(1+a_1t+\dots) = a_1$; and 
$\Delta_{t,A} \colon \Lambda(A)_{++} \to
\Lambda(\Lambda(A)_{++})_{++}$ is the unique natural ring homomorphism
that is a section of $\epsilon_{t,\Lambda(A)_{++}}$ and satisfies that
for all $a \in A$,
$$\Delta_{t,A}(1+at) = 1 + (1+at_2)t_1.$$
We claim that the natural ring isomorphism $\gamma_{++}$ is an
isomorphism of comonads from $(\mathbb{W}(-),\Delta,\epsilon)$ to
$(\Lambda(-)_{++},\Delta_t,\epsilon_t)$ in the sense that if
$(A,\lambda)$ is a coalgebra over the former comonad, then
$(A,\gamma_{++} \circ \lambda)$ is a coalgebra over the latter
comonad. Indeed, this follows immediately from the above
characterization of $\Delta_t$ and from the formula
$\Delta_A([a]) = [[a]]$ from Remark~\ref{deltaremark}. This shows that
the two definitions of a $\lambda$-ring agree. Finally, we remark that
if $(A,\lambda)$ is a $\lambda$-ring and if we expand $\lambda_t =
\gamma_{++} \circ \lambda$ as
$$\lambda_t(a) = 1 + \lambda^{\!1}(a)t + \lambda^{\!2}(a)t^2 + \dots +
\lambda^n(a)t^{\!n} + \cdots,$$
then $\lambda^{\!n} \colon A \to A$ is called the $n$th exterior operation
associated with $(A,\lambda)$; it should not be confused with
$\lambda_n \colon A \to A$. Similarly, if we expand
$\sigma_t = \gamma \circ \lambda$ as
$$\sigma_t(a) = 1 + \sigma^1(a)t + \sigma^2(a)t^2 + \dots +
\sigma^n(a)t^n + \cdots,$$
then $\sigma^n \colon A \to A$ is called the $n$th symmetric
operation associated with $(A,\lambda)$.
\end{remark}

\begin{definition}\label{adamsoperations}Let $(A,\lambda)$ be a
$\lambda$-ring. The associated $n$th Adams operation is the composite
ring homomorphism $\psi_n = w_n \circ \lambda \colon A \to A$.
\end{definition}

We note that, by Proposition~\ref{changeofcoordinates}, the series
$\psi_t(a) = \sum_{n \geqslant 1}\psi_n(a)t^n$ is given by either one
of the following formulas which are, perhaps, more familiar;
$$\psi_t(a) = t \frac{d}{dt}\log\sigma_t(a), \hskip10mm
\psi_{-t}(a) = -t \frac{d}{dt}\log\lambda_t(a).$$
We recall the following standard properties of the Adams
operations, and mention Wilkerson's
result~\cite[Proposition~1.2]{wilkerson} that, if $A$ is a ring flat
over $\Z$ equipped with a family of ring endomorphisms $\psi_n$
satisfying~(i)--(iii) below, then there is a unique $\lambda$-ring
structure on $A$ for which the $\psi_n$ are the associated Adams
operators.

\begin{lemma}\label{adamdsoperationslemma}Let $(A,\lambda)$ be a
$\lambda$-ring. The associated Adams operations satisfy that
\begin{enumerate}
\item[{\rm (i)}] the map $\psi_1$ is the identity map of $A$;
\item[{\rm (ii)}] for all positive integers $m$ and $n$, $\psi_m \circ
  \psi_n = \psi_{mn}$; and
\item[{\rm (iii)}] for every prime number $p$ and $a \in A$, $\psi_p(a)
  \equiv a^p$ modulo $pA$.
\end{enumerate}
\end{lemma}

\begin{proof}The properties~(i) and~(iii) follow immediately from the
definitions, and~(ii) follows from the identities
$$\begin{aligned}
\psi_m \circ \psi_n
{} & = w_m \circ \lambda \circ w_n \circ \lambda
{} = w_m \circ w_n \circ \mathbb{W}(\lambda) \circ \lambda \cr
{} & = w_m \circ w_n \circ \Delta \circ \lambda
{} = w_m \circ F_n \circ \lambda 
{} = w_{mn} \circ \lambda
{} = \psi_{mn}. \cr
\end{aligned}$$
Here, the second identity follows from the naturality of
$w_n$; the third identity from the definition of a $\lambda$-ring; the
forth identity from the definition of the map $\Delta$; and the fifth 
identity from the definition of the map $F_n$.
\end{proof}

Finally, we recall the following general theorem which was proved
independently by
Borger~\cite[Theorem~B]{borger},~\cite[Corollary~15.4]{borger2} and  van~der~Kallen~\cite[Theorem~2.4]{vanderkallen1}.

\begin{theorem}\label{borgervanderkallentheorem}Let $f \colon A \to B$ be an
\'{e}tale morphism, let $S$ be a finite truncation set, and let $n$ be
a positive integer. Then the induced morphism
$$\begin{xy}
(0,0)*+{ \mathbb{W}_S(A) }="1";
(26,0)*+{ \mathbb{W}_S(B) }="2";
{ \ar^-{\mathbb{W}_S(f)} "2";"1";};
\end{xy}$$
is \'{e}tale and the square diagram
$$\begin{xy}
(0,7)*+{ \mathbb{W}_S(A) }="11";
(26,7)*+{ \mathbb{W}_S(B) }="12";
(0,-7)*+{ \mathbb{W}_{S/n}(A) }="21";
(26,-7)*+{ \mathbb{W}_{S/n}(B) }="22";
{\ar^-{\mathbb{W}_S(f)} "12";"11";};
{\ar^-{F_n} "21";"11";};
{\ar^-{\mathbb{W}_{S/n}(f)} "22";"21";};
{\ar^-{F_n} "22";"12";};
\end{xy}$$
is a cocartesian square of rings. \qed
\end{theorem}

We remark that in loc.~cit., the
Theorem~\ref{borgervanderkallentheorem} is stated only for the
finite truncation sets $\langle n \rangle$ that consist of all
divisors of a given positive integer $n$. However, as explained
in~\cite[Section~9.5]{borger}, the case of a general finite truncation
set readily follows from the special case.

\section{Modules and derivations over $\lambda$-rings}\label{modulesandderivationssection}

In general, if $\mathscr{C}$ is a category in which finite limits
exist and if $X$ is an object of $\mathscr{C}$, then Beck, in his
thesis~\cite[Definition~5]{beck}, defines the category of $X$-modules
to be the category $(\mathscr{C}/X)_{\operatorname{ab}}$ of abelian group
objects in the category over $X$. He also defines the derivations from
$X$ to the $X$-module $(Y/X,+_Y,0_Y,-_Y)$ to be the set
$$\operatorname{Der}(X,(Y/X,+_Y,0_Y,-_Y)) = \Hom_{\mathscr{C}/X}(X/X,Y/X)$$
of morphisms in the category $\mathscr{C}/X$ equipped with the abelian
group structure induced by the abelian group object structure on
$Y/X$. In this section, we identify and study these notions in the
case of the category $\mathscr{A}_{\lambda}$ of $\lambda$-rings.

We recall that, in general, an adjunction from a category
$\mathscr{C}$ to a category $\mathscr{D}$ is a quadruple
$(F,G,\epsilon,\eta)$ of functors $F \colon \mathscr{C} \to
\mathscr{D}$ and $G \colon \mathscr{D} \to \mathscr{C}$ and 
natural transformations $\epsilon \colon F \circ G \Rightarrow
\id_{\mathscr{D}}$ and $\eta \colon \id_{\mathscr{C}} \Rightarrow
G \circ F$ such that the following composite natural transformations
are equal to the respective identity natural transformations,
$$\xymatrix{
{ F } \ar@{=>}[r]^-(.47){F \circ \eta} &
{ F \circ G \circ F } \ar@{=>}[r]^-(.47){\epsilon \circ F} &
{ F, } &
{ G } \ar@{=>}[r]^-(.47){\eta \circ G} &
{ G \circ F \circ G } \ar@{=>}[r]^-(.47){G \circ \epsilon} &
{ G; } \cr
}$$
compare~\cite[Theorem~IV.1.2]{maclane}. We refer to this requirement
by saying that the triangle identities hold.
The natural
transformations $\epsilon$ and $\eta$ are called the counit and the
unit of the adjunction, respectively, and the adjunction is said to be
an adjoint equivalence if they both are isomorphisms. A
functor $G \colon \mathscr{D} \to \mathscr{C}$ is said to admit a
left adjoint, if there exists an adjunction $(F,G,\epsilon,\eta)$ with
$G$ as its second component, and in this case, the functor $F$ is said
to be a left adjoint of the functor $G$. If $(F',G,\epsilon',\eta')$
is another such adjunction, then the composite
$$\xymatrix{
{ F } \ar@{=>}[r]^-(.47){F \circ \eta'} &
{ F \circ G \circ F' } \ar@{=>}[r]^-(.47){\epsilon \circ F'} &
{ F' } \cr
}$$
is the unique natural transformation $\sigma \colon F \Rightarrow F'$
with the property that the diagrams
$$\xymatrix{
{ F \circ G } \ar@{=>}[r]^-(.47){\epsilon} \ar@{=>}[d]^-{\,\sigma
  \circ G} &
{ \id_{\mathscr{D}} } \ar@{=}[d] &&
{ \id_{\mathscr{C}} } \ar@{=>}[r]^-(.47){\eta} \ar@{=}[d] &
{ G \circ F }  \ar@{=>}[d]^-{\,\sigma \circ G} \cr
{ F' \circ G } \ar@{=>}[r]^-(.47){\epsilon'} &
{ \id_{\mathscr{D}} } &&
{ \id_{\mathscr{C}} } \ar@{=>}[r]^-(.47){\eta'} &
{ G \circ F' } \cr
}$$
commute and is an isomorphism; see~\cite[Theorem~IV.7.2]{maclane}. In
this sense, a left adjoint of a functor $G$, if it exits, is unique, up
to unique isomorphism. Similar statements hold for right adjoint
functors.

Let $\mathscr{A}$ be the category of rings. We always assume rings to
be commutative and unital, unless otherwise stated. Given a ring $A$,
we define an adjunction $(F,G,\epsilon,\eta)$ from the category 
$(\mathscr{A}/A)_{\operatorname{ab}}$ of abelian group objects in the
over-category $\mathscr{A}/A$ to the category $\mathscr{M}(A)$ of
$A$-modules in the usual sense, following
Beck~\cite[Example~8]{beck}. So let $f \colon B \to A$ be an object of 
$\mathscr{A}/A$, and let
$$\xymatrix{
{ B \times_A^{\phantom{A}}B } \ar[r]^-{+_B} \ar[d] &
{ B } \ar[d]^{f} &&
{ A } \ar[r]^-{0_B} \ar@{=}[d] &
{ B } \ar[d]^{f} &&
{ B } \ar[r]^-{-_B} \ar[d]^{f} &
{ B } \ar[d]^{f} \cr
{ A } \ar@{=}[r] &
{ A } &&
{ A } \ar@{=}[r] &
{ A } &&
{ A } \ar@{=}[r] &
{ A } \cr
}
$$
be abelian group object structure maps. The functor $F$ associates to
the abelian group object $(f,+_B,0_B,-_B)$ the $A$-module $M$ given by the
kernel of $f$ with the $A$-module structure $a \cdot x  =
0_B(a)x$. Conversely, if $M$ is an $A$-module, then we let
$A \ltimes M$ be the ring given by the direct sum $A \oplus M$
equipped with the multiplication 
$$(a,x) \cdot (a',x') = (aa',ax'+a'x)$$
and define $G(M)$ to be the abelian group object $(f,+,0,-)$, where
$f \colon A \ltimes M \to A$ is the canonical projection, and 
the abelian group object structure maps are given 
by $(a,x)+(a,x') = (a,x+x')$, $0(a) = (a,0)$, and $-(a,x) =
(a,-x)$. Finally, we define $\epsilon \colon G \circ F \Rightarrow
\id$ by $\epsilon(a,x) = 0_B(a)+x$ and $\eta \colon \id \Rightarrow F
\circ G$ by $\eta(x) = (0,x)$. For later use, we include a proof of the
following result of Beck~\cite[Example~8]{beck}.

\begin{lemma}\label{adjunctionlemma1}If $A$ is a ring, then the
quadruple $(F,G,\eta,\epsilon)$ defined above is an adjoint
equivalence of categories from $(\mathscr{A}/A)_{\operatorname{ab}}$
to $\mathscr{M}(A)$.
\end{lemma}

\begin{proof}It is clear that $\eta$ is well-defined and a natural
isomorphism, and it is also clear that $\epsilon$ is a natural
isomorphism of underlying additive groups. We must show that
$\epsilon$ is a multiplicative map and a map of abelian group
objects; we first consider the latter statement. So we fix an object
$(f \colon B \to A, +_B, 0_B, -_B)$ of
$(\mathscr{A}/A)_{\operatorname{ab}}$ and let $M = \ker(f)$ with the
$A$-module structure defined above. By definition, we have
$\epsilon(a,0) = 0_B(a)$ which  shows that $\epsilon$ preserves zero
maps. To see that $\epsilon$ preserves addition maps, we first note
that, since $+_B$ is a ring homomorphism,
$$(u+v)+_B(u'+v') =  (u+_Bu')+(v+_Bv')$$
for all $(u,u'),(v,v') \in B \times_AB$. In particular, if $x,y \in
M$, then
$$x+_By = (x+0)+_B(0+y) = (x+_B0)+(0+_By) = x+y,$$
where we also use that $0= 0_B(0)$ is a common zero element for the
two compositions $+$ and $+_B$ on $M$. We therefore conclude that for
all $a \in A$ and $x,x' \in M$, 
$$\begin{aligned}
{} &\epsilon(a,x) +_B \epsilon(a,x') = (0(a) + x) +_B (0(a) + x') \cr
{} & = (0(a) +_B 0(a)) + (x +_B x') = 0(a) + (x + x') =
\epsilon(a,x+x'), \cr
\end{aligned}$$
as desired. We have showed that $\epsilon$ is compatible with the zero
and addition maps; but then it is also compatible with negation maps.

It remains to prove that the map $\epsilon$ is multiplicative, or
equivalently, that $M \subset B$ is a square-zero ideal. Since 
$+_B \colon B \times_AB \to B$ is a ring homomorphism, we have that
$uv +_B u'v' = (u+_Bu')(v+_Bv')$, for all
$(u,u'),(v,v') \in B \times_AB$. In particular,
$$xy + x'y' = (x+y)(x'+y')$$
for all $x,x',y,y' \in M$, since $+_B = +$ on $M$. Taking 
$y = x' = 0$, we find that $xy' = 0$ for all $x,y' \in M$ as
desired. This completes the proof.
\end{proof}

We will prove the analogous statement for $\lambda$-rings in
Proposition~\ref{adjunctionlemma2} below, but first we examine
the Witt vectors of $A \ltimes M$. The polynomials $s_n$,
$p_n$, and $i_n$ that define the 
sum, product, and opposite in the ring of Witt vectors all have
constant term zero. Therefore, the ring of Witt vectors is defined
also for non-unital rings. Moreover, modulo terms of higher degree,
these polynomials are congruent to $a_n+b_n$, $a_nb_n$, and $-a_n$,
respectively, as one readily proves by induction. Therefore, if $M$ is
an abelian group considered as a non-unital ring with zero
multiplication, then the non-unital ring $\mathbb{W}_S(M)$ has zero
multiplication and its underlying additive group is equal to $M^S$
with componentwise addition. In the same way, one shows that 
the polynomials $f_{n,e}$ and $d_{m,e}$ that define the $n$th
Frobenius and the universal $\lambda$-operation all have constant term
zero and that they are congruent to $na_{ne}$ and $a_{me}$,
respectively, modulo terms of higher degree. Therefore, for $M$ as above,
the map $F_n \colon \mathbb{W}_S(M) \to \mathbb{W}_{S/n}(M)$ takes $(x_m \mid
m \in S)$ to $(nx_{ne} \mid e \in S/n)$ and the map $\Delta_M \colon
\mathbb{W}(M) \to \mathbb{W}(\mathbb{W}(M))$ takes $(x_m \mid m \in
\N)$ to $((x_{me} \mid e \in \N) \mid m \in \N)$.

\begin{lemma}\label{directsumring}Let $S$ be truncation set, let $A$
be a ring, and let $M$ be an $A$-module. The canonical inclusions
induce a ring isomorphism
$$\operatorname{in}_{1*}+\operatorname{in}_{2*} \colon
\mathbb{W}_S(A) \ltimes \mathbb{W}_S(M) \to
\mathbb{W}_S(A \ltimes M),$$
provided that $\mathbb{W}_S(M)$ is given the $\mathbb{W}_S(A)$-module
structure, where for $a \in \mathbb{W}_S(A)$ and $x \in
\mathbb{W}_S(M)$, $ax \in \mathbb{W}_S(M)$ has Witt components $(ax)_n
= w_n(a)x_n$.
\end{lemma}

\begin{proof}We consider the following diagram of rings and ring
homomorphisms, whose underlying diagram of additive groups is
split-exact.
$$\xymatrix{
{ 0 } \ar[r] &
{ M } \ar[r]^(.42){\operatorname{in}_2} &
{ A \ltimes M } \ar@<-.7ex>[r]_(.62){\pr_1} &
{ A } \ar[r] \ar@<-.7ex>[l]_(.38){\operatorname{in}_1} &
{ 0 }
}$$
It induces the following diagram of rings and ring homomorphisms,
whose underlying diagram of additive groups again is split-exact.
$$\xymatrix{
{ 0 } \ar[r] &
{ \mathbb{W}_S(M) } \ar[r]^(.42){\operatorname{in}_{2*}} &
{ \mathbb{W}_S(A \ltimes M) } \ar@<-.7ex>[r]_(.62){\pr_{1*}} &
{ \mathbb{W}_S(A) } \ar[r] \ar@<-.7ex>[l]_(.38){\operatorname{in}_{1*}} &
{ 0 }
}$$
It follows that the map of the statement is a ring isomorphism, if
$\mathbb{W}_S(M)$ is given the $\mathbb{W}_S(A)$-module structure such
that
$\operatorname{in}_{2*}(ax) = \operatorname{in}_{1*}(a)
\operatorname{in}_{2*}(x)$,
for all $a \in \mathbb{W}_S(A)$ and $x \in \mathbb{W}_S(M)$.  It
remains to prove that $ax$ is equal to the Witt vector $y$ with $n$th
component $w_n(a)x_n$. Since every ring admits a surjective ring
homomorphism from a torsion free ring, we may assume that $A$ and $M$
are both torsion free. Moreover, since the ghost map is injective in 
this case, it will suffice to show that $w_n(ax) = w_n(y)$, or
equivalently, that $\operatorname{in}_2(w_n(ax) ) =
\operatorname{in}_2(w_n(y))$, for all $n \geqslant 1$. Now, since
$w_n$ is a natural ring homomorphism, we find that for all $n
\geqslant 1$,
$$\begin{aligned}
\operatorname{in}_2(w_n(ax))
{} & = w_n(\operatorname{in}_{2*}(ax))
{} = w_n(\operatorname{in}_{1*}(a)\operatorname{in}_{2*}(x))
{} = w_n(\operatorname{in}_{1*}(a))w_n(\operatorname{in}_{2*}(x)) \cr
{} & = \operatorname{in}_1(w_n(a))\operatorname{in}_2(w_n(x)) 
{} = \operatorname{in}_2(w_n(a)w_n(x)) 
{} = \operatorname{in}_2(nw_n(a)x_n) \cr
{} & = \operatorname{in}_2(ny_n) 
{} = \operatorname{in}_2(w_n(y)) \cr
\end{aligned}$$
as desired. Here the fifth equality follows from the definition of the
multiplication on the ring $A \ltimes M$.
\end{proof}

\begin{addendum}\label{explicitisomorphism}Let $S$ be a truncation
set, let $A$ be a ring, and let $M$ be an $A$-module. If $a \in
\mathbb{W}_S(A)$ and $x \in \mathbb{W}_S(M)$, then the components
$b_n = (a_n,y_n)$ of the Witt vector $b = \operatorname{in}_{1*}(a) +
\operatorname{in}_{2*}(x) \in \mathbb{W}_S(A \ltimes M)$ satisfy that
for all $n \in S$,
$$\sum_{e \mid n} a_e^{(n/e)-1}y_e = x_n.$$
\end{addendum}

\begin{proof}We may assume that $A$ and $M$ are torsion free and
proceed to calculate $w_n(b)$ in two different ways. First, since
$w_n$ is a natural ring homomorphism, we have
$$\begin{aligned}
w_n(b) 
{} & = w_n(\operatorname{in}_{1*}(a))+w_n(\operatorname{in}_{2*}(x))
= \operatorname{in}_1(w_n(a))+\operatorname{in}_2(w_n(x)) \cr
{} & = (w_n(a),w_n(x)) = (\sum_{e \mid n} ea_e^{n/e},nx_n). \cr
\end{aligned}$$
Second, by the definition of the multiplication in $A \ltimes M$, we
have
$$w_n(b) = 
\sum_{e \mid n} eb_e^{n/e} = \sum_{e \mid n} e(a_e,y_e)^{n/e} =
(\sum_{e \mid n} ea_e^{n/e}, \sum_{e \mid n}na_e^{(n/e)-1}y_e).$$
The stated formula follows as $M$ was assumed to be torsion free.
\end{proof}

\begin{example}\label{primeexample}Let $p$ be a prime number. Then
$y_p = x_p - a_1^{p-1}x_1$.
\end{example}

In general, if $f \colon A \to B$ is a ring homomorphism and if $M$
and $N$ are modules over $A$ and $B$, respectively, then we define an
$f$-linear map $h \colon M \to N$ to be an additive map such that
$h(ax) = f(a)h(x)$, for all $a \in A$ and $x \in M$. In the following,
given an $A$-module $M$ and a truncation set $S \subset \N$, we write
$\mathbb{W}_S(M)$ for the $\mathbb{W}_S(A)$-module given by the set
$M^S$ with componentwise addition and with the scalar multiplication
of $a \in \mathbb{W}_S(A)$ and $x \in \mathbb{W}_S(M)$ defined by to
be the element $ax \in \mathbb{W}_S(M)$ with
$$(ax)_n = \psi_{A,n}(a)x_n$$
for all $n \in S$; compare Definition~\ref{adamsoperations} and
Lemma~\ref{directsumring}. We remark that if $M$ is the ring $A$
considered a module over itself via multiplication, then the
$\mathbb{W}_S(A)$-module $\mathbb{W}_S(M)$ defined above usually is
not the same as the ring $\mathbb{W}_S(A)$ considered as a module over
itself via multiplication. To avoid confusion, we will use
$\mathbb{W}_S(A)$ to indicate the ring of Witt vectors only and will
not use it indicate either module over this ring.

\begin{definition}\label{lambdamodule}Let $(A,\lambda_A)$ be a
$\lambda$-ring. An $(A,\lambda_A)$-module is a pair
$(M,\lambda_M)$ of an $A$-module $M$ and a $\lambda_A$-linear map
$$\lambda_M \colon M \to \mathbb{W}(M)$$
with the property that the diagrams
$$\begin{xy}
(0,7)*+{ M }="11";
(20,7)*+{ \mathbb{W}(M) }="12";
(45,7)*+{ \mathbb{W}(\mathbb{W}(M)) }="13";
(68,7)*+{ \mathbb{W}(M) }="14";
(20,-7)*+{ M }="22";
(45,-7)*+{ \mathbb{W}(M) }="23";
(68,-7)*+{ M }="24";
{\ar_-{\epsilon_M} "11";"12";};
{\ar@{=}@<+.25ex> "11";"22";};
{\ar_-{\lambda_M} "12";"22";};
{\ar_-{\Delta_M} "13";"14";};
{\ar_-{\mathbb{W}(\lambda)} "13";"23";};
{\ar_-{\lambda_M} "14";"24";};
{\ar_-{\lambda_M} "23";"24";};
\end{xy}$$
commute. A morphism $h \colon (M,\lambda_M) \to (N,\lambda_N)$ of
$(A,\lambda)$-modules is an $A$-linear map $h \colon M \to N$ such
that $\lambda_N \circ h = \mathbb{W}(h) \circ \lambda_M$.
\end{definition}

\begin{remark}\label{lambdamoduleremark}Let $(A,\lambda_A)$ be a
$\lambda$-ring, $M$ an $A$-module, and $\lambda_M \colon M \to
\mathbb{W}(M)$ a map. Then $(M,\lambda_M)$ is an
$(A,\lambda_A)$-module if and only if the components $\lambda_{M,n}
\colon M \to M$ are $\psi_{A,n}$-linear and satisfy $\lambda_{M,1} =
\id_M$ and $\lambda_{M,m} \circ \lambda_{M,n} = \lambda_{M,mn}$, for
all $m,n \in \N$. Hence, we may identify the category
$\mathscr{M}(A,\lambda_A)$ with the category
$\mathscr{M}(A^{\psi}[\N])$ of left modules over the twisted monoid
algebra $A^{\psi}[\N]$ by associating to the $(A,\lambda_A)$-module
$(M,\lambda_M)$ the left $A^{\psi}[\N]$-module given by the $A$-module
$M$ and with $n \in \N$ acting through the map
$\lambda_{M,n} \colon M \to M$. In particular, the category
$\mathscr{M}(A,\lambda_A)$ is abelian.
\end{remark}

\begin{example}\label{freelambdamodule}Let $(A,\lambda_A)$ be a
$\lambda$-ring. The functor that to an $(A,\lambda_A)$-module
$(M,\lambda_M)$ assigns the underlying set of $M$ has a left adjoint
functor that to the set $S$ assigns the free $(A,\lambda_A)$-module
$(F(S),\lambda_{F(S)})$ defined as follows. The $A$-module $F(S)$ is
defined to be 
the free $A$-module generated by the symbols $\lambda_{F(S),n}(s)$,
where $s \in S$ and $n \in \N$, and $\lambda_{F(S)} \colon F(S) \to
\mathbb{W}(F(S))$ is defined to be the map with $m$th component 
$$\lambda_{F(S),m}( \sum a_{s,n}\lambda_{F(S),n}(s)) 
= \sum \psi_{A,m}(a_{s,n})\lambda_{F(S),mn}(s).$$
It follows from Remark~\ref{lambdamoduleremark} that the pair
$(F(S),\lambda_{F(S)})$ is an $(A,\lambda_A)$-module. The unit of the
adjunction maps $s \in S$ to $\lambda_{F(S),1}(s) \in F(S)$, and the
counit of the adjunction maps $\sum a_{x,n}\lambda_{F(M),n}(x) \in
F(M)$ to $\sum  a_{x,n}\lambda_{M,n}(x) \in M$. It is straightforward
to verify that the triangle identities hold.
\end{example}

\begin{example}\label{lambdaringwithadamsoperations}If $(A,\lambda_A)$
is a $\lambda$-ring, then there is an $(A,\lambda_A)$-module
$(M,\lambda_M)$ defined by setting $M = A$ and $\lambda_{M,n} =
\psi_{A,n}$. This $(A,\lambda_A)$-module is not a free
$(A,\lambda_A)$-module in the sense of Example~\ref{freelambdamodule},
except in trivial cases. We warn the reader that the pair
$(A,\lambda_A)$ is typically not an $(A,\lambda_A)$-module, let alone
a free $(A,\lambda_A)$-module; compare the discussion preceding
Definition~\ref{lambdamodule}. 
\end{example}

\begin{example}\label{algebraicktheory}Let $A$ be a ring, unital and
commutative, and let $K_*(A)$ be the graded ring given by the Quillen
$K$-groups. The ring $K_0(A)$ has a canonical $\lambda$-ring structure
defined by Grothendieck~\cite{grothendieck1}, and for all 
$q \geqslant 1$, the group $K_q(A)$ has a canonical structure of a
module over this $\lambda$-ring defined by Kratzer~\cite{kratzer} and
Quillen~\cite{hiller}. The $(K_0(A),\lambda_{K_0(A)})$-module
structure maps are given by
$$\lambda_{K_q(A),n}^{\phantom{n}} = (-1)^{n-1}\lambda_{K_q(A)}^n
\colon K_q(A) \to K_q(A)$$
with $\lambda_{K_q(A)}^n$ defined
in~\cite[Th\'{e}or\`{e}me~5.1]{kratzer}. 
\end{example}

Let $U \colon \mathscr{A}_{\lambda} \to \mathscr{A}$ be the forgetful
functor from the category of $\lambda$-rings to the category of rings
that to a $\lambda$-ring $(A,\lambda)$ assigns the underlying ring
$A$. It admits the right adjoint functor $R \colon \mathscr{A} \to
\mathscr{A}_{\lambda}$ defined by $R(A) = (\mathbb{W}(A),\Delta_A)$
with the counit and unit maps defined by $\lambda \colon (A,\lambda)
\to (\mathbb{W}(A),\Delta_A)$ and $\epsilon_A \colon \mathbb{W}(A) \to
A$, respectively. The forgetful functor $U$ also admits a left
adjoint, but this will not be relevant for the discussion below. Since
$\mathbb{W}(-)$ preserves limits, the forgetful functor $U$ creates
limits. Indeed, if $\{(A_i,\lambda_i)\}$ is a diagram of
$\lambda$-rings and if $\{ p_i \colon A \to A_i \}$ is a limit in
$\mathscr{A}$ of the diagram $\{A_i\}$, then $\{ \mathbb{W}(p_i)
\colon \mathbb{W}(A) \to \mathbb{W}(A_i) \}$ is a limit in
$\mathscr{A}$ of the diagram $\{\mathbb{W}(A_i)\}$.  Therefore, we
conclude that the pair $(A,\lambda)$, where $\lambda \colon A \to
\mathbb{W}(A)$ is defined to be the unique map with $i$th component
$\lambda_i \circ p_i \colon A \to \mathbb{W}(A_i)$, is a
$\lambda$-ring and that the family $\{ p_i \colon (A,\lambda) \to
(A_i,\lambda_i) \}$ is a limit in $\mathscr{A}_{\lambda}$ of the given
diagram. It follows that that for $(A,\lambda_A)$ a $\lambda$-ring, we
obtain an adjunction 
$$\begin{xy}
(-13,0)*+{ \mathscr{A}_{\lambda}/(A,\lambda_A) }="a";
(13,0)*+{ \mathscr{A}/A, }="b";
{ \ar@<.7ex>^-{U_{(A,\lambda_A)}} "b";"a";};
{ \ar@<.7ex>^-{R_{(A,\lambda_A)}} "a";"b";};
\end{xy}$$
where the left adjoint functor $U_{(A,\lambda_A)}$ takes $f \colon
(B,\lambda_B) \to (A,\lambda_A)$ to $f \colon B \to A$, the
right adjoint functor $R_{(A,\lambda_A)}$ takes $f \colon B \to A$ to
$p_2 \colon (C,\lambda_C) \to (A,\lambda_A)$, for
$$\xymatrix{
{ (C,\lambda_C) } \ar[r]^-{p_1} \ar[d]^{p_2} &
{ (\mathbb{W}(B),\Delta_B) } \ar[d]^-{\mathbb{W}(f)} \cr
{ (A,\lambda_A) } \ar[r]^-{\lambda_A} &
{ (\mathbb{W}(A),\Delta_A) } \cr
}$$
a choice of a pullback, the counit is $\epsilon_B \circ p_1$, and the
unit is the unique map with components $\lambda_B$ and $f$. Since
the functors $U_{(A,\lambda_A)}$ and $R_{(A,\lambda_A)}$ both preserve
limits, this adjunction, in turn, induces an adjunction
$$\begin{xy}
(-16,0)*+{ (\mathscr{A}_{\lambda}/(A,\lambda_A))_{\operatorname{ab}} }="a";
(16,0)*+{ (\mathscr{A}/A)_{\operatorname{ab}} }="b";
{ \ar@<.7ex>^-{U_{(A,\lambda_A)}} "b";"a";};
{ \ar@<.7ex>^-{R_{(A,\lambda_A)}} "a";"b";};
\end{xy}$$
between the associated categories of abelian group
objects. Corresponding to this, we have the following adjunction
$$\begin{xy}
(-12,0)*+{ \mathscr{M}(A,\lambda_A) }="a";
(12,0)*+{ \mathscr{M}(A) }="b";
{ \ar@<.7ex>^-{U'} "b";"a";};
{ \ar@<.7ex>^-{R'} "a";"b";};
\end{xy}$$
where the left adjoint functor $U'$ takes $(M,\lambda_M)$ to $M$,
the right adjoint functor $R'$ takes $N$ to
$(\lambda_{A*}(\mathbb{W}(N)),\Delta_N)$, and the counit and unit 
maps are defined to be the maps $\epsilon_N \colon
\lambda_{A*}(\mathbb{W}(N)) \to N$ and $\lambda_M \colon (M,\lambda_M)
\to (\lambda_{A*}(\mathbb{W}(M)),\Delta_M)$, respectively. Here we write
$\lambda_{A*}(\mathbb{W}(N))$ for the $\mathbb{W}(A)$-module
$\mathbb{W}(N)$ considered as an $A$-module via $\lambda_A$.

\begin{prop}\label{adjunctionlemma2}Let $(A,\lambda_A)$ be a
$\lambda$-ring. There exists, up to unique isomorphism, a unique
adjunction
$(F^{\lambda},G^{\lambda},\epsilon^{\lambda},\eta^{\lambda})$ from
$(\mathscr{A}_{\lambda}/(A,\lambda_A))_{\operatorname{ab}}$ to
$\mathscr{M}(A,\lambda_A)$ such that, in the following diagram, the
square of left adjoint functors commutes,
$$\begin{xy}
(-17,7)*+{ (\mathscr{A}/A)_{\operatorname{ab}} }="a";
 (17,7)*+{ \mathscr{M}(A) }="b";
(-17,-7)*+{ (\mathscr{A}_{\lambda}/(A,\lambda_A))_{\operatorname{ab}}
}="c";
(17,-7)*+{ \mathscr{M}(A,\lambda_A). }="d";
{ \ar@<.7ex>^-{F} "b";"a";};
{ \ar@<.7ex>^-{G} "a";"b";};
{ \ar@<.7ex>^-{U_{(A,\lambda_A)}} "a";"c";};
{ \ar@<.7ex>^-{R_{(A,\lambda_A)}} "c";"a";};
{ \ar@<.7ex>^-{F^{\lambda}} "d";"c";};
{ \ar@<.7ex>^-{G^{\lambda}} "c";"d";};
{ \ar@<.7ex>^-{U'} "b";"d";};
{ \ar@<.7ex>^-{R'} "d";"b";};
\end{xy}$$
Moreover, the adjunction
$(F^{\lambda},G^{\lambda},\epsilon^{\lambda},\eta^{\lambda})$ is an
adjoint equivalence of categories.
\end{prop}

We remark that, by the uniqueness statement for adjoints, which we
recalled at the beginning of the section, the commutativity of the
square of left adjoint functors in the diagram in
Proposition~\ref{adjunctionlemma2} implies that the corresponding
square of right adjoint functors commutes, up to unique natural
isomorphism.

\begin{proof}If $(f,+,0,-)$ is an object of
$(\mathscr{A}_{\lambda}/(A,\lambda_A))_{\operatorname{ab}}$ with
underlying map of $\lambda$-rings $f \colon (B,\lambda_B) \to
(A,\lambda_A)$, then we define $F^{\lambda}(f,+,-,0)$ to be the pair
$(M,\lambda_M)$ of the kernel $M = F(f)$ of $f$ and the induced map
$\lambda_M \colon M \to \mathbb{W}(M)$ of kernels of the vertical maps
in the following diagram. We note that $U' \circ F^{\lambda} = F \circ
U_{(A,\lambda_A)}$ as stated.
$$\begin{xy}
(-10,7)*+{ B }="a";
(10,7)*+{ \mathbb{W}(B) }="b";
(-10,-7)*+{ A }="c";
(10,-7)*+{ \mathbb{W}(A) }="d";
{ \ar^-{\lambda_B} "b";"a";};
{ \ar^-{f} "c";"a";};
{ \ar^-{\mathbb{W}(f)} "d";"b";};
{ \ar^-{\lambda_A} "d";"c";};
\end{xy}$$
Conversely, if $(M,\lambda_M)$ is an $(A,\lambda_A)$-module,
then we define $G^{\lambda}(M,\lambda_M)$ to be the abelian group
object $G(M)$ in $\mathscr{A}/A$ with the underlying ring
$B = A \ltimes M$ equipped with the $\lambda$-ring structure
$\lambda_B \colon B \to \mathbb{W}(B)$ given by
the composite map
$$\begin{xy}
(-31,0)*+{ \phantom{()} A \ltimes M }="a";
(0,0)*+{ \mathbb{W}(A) \ltimes \mathbb{W}(M) }="b";
(36,0)*+{ \mathbb{W}(A \ltimes M); \phantom{()} }="c";
{ \ar^-{\lambda_A \oplus \lambda_M} "b";"a";};
{ \ar^-{\operatorname{in}_{1*} +\operatorname{in}_{2*}} "c";"b";};
\end{xy}$$
compare Lemma~\ref{directsumring}. To prove that
$G^{\lambda}(M,\lambda_M)$ is well-defined, we must 
show (a)~that $(B,\lambda_B)$ is a $\lambda$-ring; (b)~that the
canonical projection $f \colon (B,\lambda_B) \to (A,\lambda_A)$ is a
$\lambda$-ring homomorphism; and (c)~that the abelian group object
structure maps $+_B$, $0_B$, and $-_B$ on $f \colon B \to A$ are
$\lambda$-ring homomorphisms. First, the map
$\lambda_A \oplus \lambda_M$ is a ring homomorphism, since $\lambda_M$
is a $\lambda_A$-linear map. Moreover, Lemma~\ref{directsumring} shows that also
$\operatorname{in}_{1*}+\operatorname{in}_{2*}$ is a ring
homomorphism, so $\lambda_B$ is a ring homomorphism. To prove~(a), it
remains to show that the diagrams in Definition~\ref{lambdaring} commute.
The left-hand diagram commutes, since $\epsilon_A
\circ \lambda_A = \id_A$ and $\epsilon_M \circ \lambda_M = \id_M$ and
since $\operatorname{in}_{1*} + \operatorname{in}_{2*}$ is the
identity map on the first Witt component. To see that the right-hand
square commutes, we consider the following larger diagram,
$$\begin{xy}
(-47,14)*+{ \mathbb{W}(\mathbb{W}(A \ltimes M)) }="11";
(42, 14)*+{ \mathbb{W}(A \ltimes M) }="13";
(-47,0)*+{ \mathbb{W}(\mathbb{W}(A) \ltimes \mathbb{W}(M)) }="21";
(0,0)*+{ \mathbb{W}(\mathbb{W}(A)) \ltimes \mathbb{W}(\mathbb{W}(M))
}="22";
(42,0)*+{ \mathbb{W}(A) \ltimes \mathbb{W}(M)}="23";
(-47,-14)*+{ \mathbb{W}(A \ltimes M) }="31";
(0,-14)*+{ \mathbb{W}(A) \ltimes \mathbb{W}(M) }="32";
(42,-14)*+{ A \ltimes M. }="33";
{ \ar_-{\Delta_{A \ltimes M}} "11";"13";};
{ \ar_-{\mathbb{W}(\operatorname{in}_{1*}+\operatorname{in}_{2*})} "11";"21";};
{ \ar_-{\operatorname{in}_{1*}+\operatorname{in}_{2*}} "13";"23";};
{ \ar_-{\operatorname{in}_{1*}+\operatorname{in}_{2*}} "21";"22";};
{ \ar_-{\Delta_A \oplus \Delta_M} "22";"23";};
{ \ar_-{\mathbb{W}(\lambda_A \oplus \lambda_M)} "21";"31";};
{ \ar_-{\mathbb{W}(\lambda_A) \oplus \mathbb{W}(\lambda_M)} "22";"32";};
{ \ar_-{\lambda_A \oplus \lambda_M} "23";"33";};
{ \ar_-{\operatorname{in}_{1*}+\operatorname{in}_{2*}} "31";"32";};
{ \ar_-{\lambda_A \oplus \lambda_M} "32";"33";};
\end{xy}$$
Here, the lower right-hand square commutes, since $(A,\lambda_A)$ is a
$\lambda$-ring and since $(M,\lambda_M)$ is an $(A,\lambda_A)$-module,
and the lower left-hand square commutes by the naturality of
$\operatorname{in}_{1*}+\operatorname{in}_{2*}$. To prove the upper
rectangular diagram commutes, it suffices to show that the two
compositions with the $n$th ghost map
$$\xymatrix{
{ \mathbb{W}(\mathbb{W}(A \ltimes M)) } \ar[r]^-{w_n} &
{ \mathbb{W}(A \ltimes M) } \cr
}$$
agree. 
This, in turn, follows from the calculation
$$\begin{aligned}
{} & w_n \circ \Delta_{A \ltimes M} \circ
(\operatorname{in}_{1*}+\operatorname{in}_{2*}) 
{} = F_n \circ (\operatorname{in}_{1*}+\operatorname{in}_{2*}) 
{} = (\operatorname{in}_{1*}+\operatorname{in}_{2*}) \circ (F_n
\oplus F_n) \cr
{} & = (\operatorname{in}_{1*}+\operatorname{in}_{2*}) \circ (w_n \oplus
w_n) \circ (\Delta_A \oplus \Delta_M) \cr
{} & = (\operatorname{in}_{1*}+\operatorname{in}_{2*}) \circ w_n \circ
(\operatorname{in}_{1*}+\operatorname{in}_{2*}) \circ (\Delta_A \oplus
\Delta_M) \cr
{} & {} = w_n \circ
\mathbb{W}(\operatorname{in}_{1*}+\operatorname{in}_{2*}) \circ
(\operatorname{in}_{1*}+\operatorname{in}_{2*}) \circ (\Delta_A \oplus
\Delta_M), \cr
\end{aligned}$$
where the first and third equalities hold by the definition of
$\Delta$, where the second and fourth equalities hold by the
naturality of $\operatorname{in}_{1*}+\operatorname{in}_{2*}$, and
where the fifth equality equality holds by the naturality of
$w_n$. This proves~(a). Next, if $h \colon (M,\lambda_M) \to
(N,\lambda_N)$ is a map of $(A,\lambda_A)$-modules, then the following
diagram commutes,
$$\begin{xy}
(-31,7)*+{ A \ltimes M }="11";
(0,7)*+{ \mathbb{W}(A) \ltimes \mathbb{W}(M) }="12";
(36,7)*+{ \mathbb{W}(A \ltimes M) }="13";
(-31,-7)*+{ A \ltimes N }="21";
(0,-7)*+{ \mathbb{W}(A) \ltimes \mathbb{W}(N) }="22";
(36,-7)*+{ \mathbb{W}(A \ltimes N). }="23";
{ \ar^-{\lambda_A \oplus \lambda_M} "12";"11";};
{ \ar^-{\operatorname{in}_{1*}+\operatorname{in}_{2*}} "13";"12";};
{ \ar^-{\id \oplus h} "21";"11";};
{ \ar^-{\id \oplus h_*} "22";"12";};
{ \ar^-{(\id \oplus h)_*} "23";"13";};
{ \ar^-{\lambda_A \oplus \lambda_N} "22";"21";};
{ \ar^-{\operatorname{in}_{1*}+\operatorname{in}_{2*}} "23";"22";};

\end{xy}$$
Taking $(N,\lambda_N)$ to be the trivial $(A,\lambda_A)$-module,~(b)
follows. We use other instances of this diagram to prove~(c). The maps
$0_B$ and $-_B$ are induced by the $(A,\lambda_A)$-module maps
$0_M \colon (0,\id) \to (M,\lambda_M)$ and
$-_M \colon (M,\lambda_M) \to (M,\lambda_M)$ that map $0$ to $0$ and
$x$ to $-x$, respectively. Hence, the diagram shows that both are
$\lambda$-ring homomorphisms. Finally, if we define
$\lambda_{M \oplus M}$ to be the composite map
$$\begin{xy}
(-33,0)*+{ \phantom{()} M \oplus M }="a";
(0,0)*+{ \mathbb{W}(M) \oplus \mathbb{W}(M) }="b";
(38,0)*+{ \mathbb{W}(M \oplus M). \phantom{()} }="c";
{ \ar^-{\lambda_M \oplus \lambda_M} "b";"a";};
{ \ar^-{\operatorname{in}_{1*} +\operatorname{in}_{2*}} "c";"b";};
\end{xy}$$
then $(M \oplus M, \lambda_{M \oplus M})$ is a direct sum of the
$(A,\lambda_A)$-module $(M,\lambda_M)$ with itself. Now, the addition
map $+_B$ is given by the map
$$\xymatrix{
{ (A \ltimes (M \oplus M),\lambda_{A \ltimes (M \oplus M)}) }
\ar[rr]^-{\id \oplus +_M} &&
{ (A \ltimes M, \lambda_{A \ltimes M}), }
}$$
where $+_M \colon (M \oplus M, \lambda_{M \oplus M}) \to
(M,\lambda_M)$ takes $(x,y)$ to $x+y$. To complete the proof of~(c), we
must verify that $+_M$ is a map of $(A,\lambda_A)$-modules, that is,
that the diagram
$$\begin{xy}
(-31,7)*+{ M \oplus M }="11";
(0,7)*+{ \mathbb{W}(M) \oplus \mathbb{W}(M) }="12";
(36,7)*+{ \mathbb{W}(M \oplus M) }="13";
(-31,-7)*+{ M }="21";
(0,-7)*+{ \mathbb{W}(M) }="22";
(36,-7)*+{ \mathbb{W}(M) }="23";
{ \ar^-{\lambda_M \oplus \lambda_M} "12";"11";};
{ \ar^-{\operatorname{in}_{1*}+\operatorname{in}_{2*}} "13";"12";};
{ \ar^-{+_M} "21";"11";};
{ \ar^-{+_{\mathbb{W}(M)}} "22";"12";};
{ \ar^-{\mathbb{W}(+_M)} "23";"13";};
{ \ar^-{\lambda_M} "22";"21";};
{ \ar@{=} "23";"22";};
\end{xy}$$
commutes. But the left-hand square commutes, as $\lambda_M$ is
an additive map, and the right-hand square commutes, since the addition in
$\mathbb{W}(M)$ is given by adding the Witt components of vectors, so
(c)~follows. This completes the proof that the functor $G^{\lambda}$
is well-defined. We also note that, by construction, we have 
$U_{(A,\lambda_A)} \circ G^{\lambda} = G \circ U'$.

Finally, we claim that there are unique natural isomorphisms
$$\xymatrix{
{ G^{\lambda} \circ F^{\lambda} } \ar@{=>}[r]^-{\epsilon^{\lambda}} &
{ \id } &
{ \id } \ar@{=>}[r]^-{\eta^{\lambda}} &
{ F^{\lambda} \circ G^{\lambda} } \cr
}$$
such that $U_{(A,\lambda_A)}(\epsilon^{\lambda}) = \epsilon \circ
U_{(A,\lambda_A)}$ and $U'(\eta^{\lambda}) = \eta \circ U'$. Indeed,
this amounts to the following diagrams being commutative, where, in
the bottom diagram, $i \colon M \to B$ is the (chosen) of kernel of $f
\colon B \to A$;
$$\begin{xy}
(-31,19)*+{ M }="11";
(0,19)*+{ \mathbb{W}(M) }="12";
(36,19)*+{ \mathbb{W}(M) }="13";
(-31,5)*+{ A \ltimes M }="21";
(0,5)*+{ \mathbb{W}(A) \ltimes \mathbb{W}(M) }="22";
(36,5)*+{ \mathbb{W}(A \ltimes M) }="23";
{ \ar^-{\lambda_M} "12";"11";};
{ \ar@{=} "13";"12";};
{ \ar^-{\operatorname{in}_2} "21";"11";};
{ \ar^-{\operatorname{in}_2} "22";"12";};
{ \ar^-{\operatorname{in}_{2*}} "23";"13";};
{ \ar^-{\lambda_A \oplus \lambda_M} "22";"21";};
{ \ar^-{\operatorname{in}_{1*}+\operatorname{in}_{2*}} "23";"22";};
(-31,-5)*+{ A \ltimes M }="11";
(0,-5)*+{ \mathbb{W}(A) \ltimes \mathbb{W}(M) }="12";
(36,-5)*+{ \mathbb{W}(A \ltimes M) }="13";
(-31,-19)*+{ B }="21";
(0,-19)*+{ \mathbb{W}(B) }="22";
(36,-19)*+{ \mathbb{W}(B). }="23";
{ \ar^-{\lambda_A \oplus \lambda_M} "12";"11";};
{ \ar^-{\operatorname{in}_{1*}+\operatorname{in}_{2*}} "13";"12";};
{ \ar^-{0_B + i} "21";"11";};
{ \ar^-{0_{B*}+i_*} "22";"12";};
{ \ar^-{(0_B+i)_*} "23";"13";};
{ \ar^-{\lambda_B} "22";"21";};
{ \ar@{=} "23";"22";};
\end{xy}$$
The left-hand squares in the two diagrams commute by naturality and
the right-hand squares commute by the universal property of the direct
sum. 
\end{proof}

\begin{remark}\label{changeofrings}A map of $\lambda$-rings $f \colon
(B,\lambda_B) \to (A,\lambda_A)$ gives rise to a functor
$$f_* \colon \mathscr{M}(A,\lambda_A) \to \mathscr{M}(B,\lambda_B)$$
defined by viewing an $(A,\lambda_A)$-module $(N,\lambda_N)$ as a
$(B,\lambda_B)$-module $f_*(N,\lambda_N)$ via the map $f$. The functor
$f_*$ has a left adjoint functor $f^*$ that to a $(B,\lambda_B)$-module
$(M,\lambda_M)$ associates the $(A,\lambda_A)$-module
$f^*(M,\lambda_M) = (A,\lambda_A)
\otimes_{(B,\lambda_B)}(M,\lambda_M)$ defined by
$$(A,\lambda_A) \otimes_{(B,\lambda_B)}(M,\lambda_M) = (A \otimes_B M,
\lambda_{A \otimes_BM})$$ 
where $\lambda_{A \otimes_BM}$ is given by the composition of
$\lambda_A \otimes_{\lambda_B} \lambda_M$ and the map
$$\mathbb{W}(A) \otimes_{\mathbb{W}(B)} \mathbb{W}(M) \to
\mathbb{W}(A \otimes_B M),$$
that to $a \otimes x$ associates the vector whose $n$th Witt component
is $w_n(a) \otimes x_n$. 
\end{remark}

\begin{definition}\label{lambdaderivation}Let $(A,\lambda_A)$ be a
$\lambda$-ring and let $(M,\lambda_M)$ be an
$(A,\lambda_A)$-module. A derivation from $(A,\lambda_M)$ to
$(M,\lambda_M)$ is a map of sets
$$D \colon (A,\lambda_A) \to (M,\lambda_M)$$
such that the following~(1)--(3) hold.
\begin{enumerate}
\item[(1)]For all $a,b \in A$, $D(a+b) = D(a) + D(b)$.
\item[(2)]For all $a,b \in A$, $D(ab) = bD(a) + aD(b)$.
\item[(3)]For all $a \in A$ and $n \in \N$, $\lambda_{M,n}(D(a)) =
  \sum_{e \mid n} \lambda_{A,e}(a)^{(n/e)-1}D(\lambda_{A,e}(a))$.
\end{enumerate}
The set of derivations from $(A,\lambda_A)$ to $(M,\lambda_M)$ is
denoted by $\operatorname{Der}((A,\lambda_A),(M,\lambda_M))$.
\end{definition} 

We next show that, under the equivalence of categories given in
Proposition~\ref{adjunctionlemma2}, Definition~\ref{lambdaderivation}
agrees with Beck's general definition of a derivation.

\begin{prop}\label{derivationslemma}Let $(A,\lambda_A)$ be a
$\lambda$-ring, let $(M,\lambda_M)$ be an $(A,\lambda_A)$-module, and
let $f \colon (A \ltimes M, \lambda_{A \ltimes M}) \to (A,\lambda_A)$
be the canonical projection. The map
$$\xymatrix{
{ \operatorname{Der}((A,\lambda_A),(M,\lambda_M)) } \ar[r] &
{ \Hom_{\mathscr{A}_{\lambda}/(A,\lambda_A)}(\id_{(A,\lambda_A)}, f) } \cr
}$$
that to $D$ assigns $(\id_A, D)$ is a bijection. 
\end{prop}

\begin{proof}The map from $\operatorname{Der}(A,M)$ to
$\Hom_{\mathscr{A}/A}(\id_A,f)$ that takes $D$ to $(\id_A,D)$ is a
bijection, as is well-known and readily verified. We must show that
$D$ satisfies~(3) if and only if $(\id_A,D) \colon A \to A \ltimes M$
is a $\lambda$-ring homomorphism, and the latter means that the
following diagram commutes.
$$\begin{xy}
(-31,7)*+{ A }="11";
(36,7)*+{ \mathbb{W}(A) }="13";
(-31,-7)*+{ A \ltimes M }="21";
(0,-7)*+{ \mathbb{W}(A) \ltimes \mathbb{W}(M) }="22";
(36,-7)*+{ \mathbb{W}(A \ltimes M) }="23";
{ \ar^-{\lambda_A} "13";"11";};
{ \ar^-{(\id_A,D)} "21";"11";};
{ \ar^-{(\id_A,D)_*} "23";"13";};
{ \ar^-{\lambda_A \oplus \lambda_M} "22";"21";};
{ \ar^-{\operatorname{in}_{1*}+\operatorname{in}_{2*}} "23";"22";};
\end{xy}$$
Now, on the one hand, the map $(\id_A,D)$ takes $a$ to $(a,Da)$
which by $\lambda_A \oplus \lambda_M$ is mapped to $(\lambda_A,
\lambda_M(Da))$ whose $n$th Witt component is
$(\lambda_{A,n}(a),\lambda_{M,n}(Da))$ and, on the other hand, the
$e$th Witt component of the image of $a$ by the composite map
$(\id_A,D)_* \circ \lambda_A$ is equal to
$(\lambda_{A,e}(a),D\lambda_{A,e}(a))$. Hence, 
Lemma~\ref{directsumring} and Addendum~\ref{explicitisomorphism} show
that the diagram commutes if and only if $D$ satisfies~(3).
\end{proof}

\begin{lemma}\label{universalderivation}Let $(A,\lambda_A)$ be a
$\lambda$-ring. There exists a derivation
$$\xymatrix{
{ (A,\lambda_A) } \ar[r]^-{d} &
{ (\Omega_{(A,\lambda_A)}, \lambda_{\Omega_{(A,\lambda_A)}}) } \cr
}$$
which corepresents the functor that to an $(A,\lambda_A)$-module
$(M,\lambda_M)$ assigns the set of derivations
$\operatorname{Der}((A,\lambda_A),(M,\lambda_M))$. 
\end{lemma}

\begin{proof}We define the target of the map $d$ to be the quotient of
the free $(A,\lambda_A)$-module $(F,\lambda_F)$ generated by 
$\{ d(a) \mid a \in A \}$ by the sub-$(A,\lambda_A)$-module
$(R,\lambda_R) \subset (F,\lambda_F)$ generated by 
$d(a+b) - d(a) - d(b)$ with $a,b \in A$; by $d(ab) - bd(a) -
ad(b)$ with $a,b \in A$; and by $\lambda_{F,n}(da) -
  \sum_{e \mid n} \lambda_{A,e}(a)^{(n/e)-1}d(\lambda_{A,e}(a))$ with
$a \in A$ and $n \in \N$. The map $d$ takes $a \in A$ to the class
of $d(a)$ in $\Omega_{(A,\lambda_A)}$. It is clear from the
construction that given a derivation $D \colon (A,\lambda_A) \to
(M,\lambda_M)$, there is a well-defined map of $(A,\lambda_A)$-modules
$f \colon (\Omega_{(A,\lambda_A)},\lambda_{\Omega_{(A,\lambda_A)}})
\to (M,\lambda_M)$
such that $D = f \circ d$ and that $f$ is unique with this
property. This proves the lemma.
\end{proof}

The map $d \colon A \to \Omega_{(A,\lambda_A)}$ in
Lemma~\ref{universalderivation}, in particular, is a derivation of the
ring $A$, and hence, it defines a map of $A$-modules 
$\Omega_A \to \Omega_{(A,\lambda_A)}$. We call this map the canonical
map and now prove Theorem~\ref{universalderivations}, which states that
it is an isomorphism.

\begin{proof}[Proof of Theorem~\ref{universalderivations}]We consider
the diagram of  adjunctions
$$\begin{xy}
(-34,7)*+{ \mathscr{A}/A }="11";
(0,7)*+{ (\mathscr{A}/A)_{\operatorname{ab}} }="12";
(34,7)*+{ \mathscr{M}(A) }="13";
(-34,-7)*+{ \mathscr{A}_{\lambda}/(A,\lambda_A) }="21";
(0,-7)*+{ (\mathscr{A}_{\lambda}/(A,\lambda_A))_{\operatorname{ab}}
}="22";
(34,-7)*+{ \mathscr{M}(A,\lambda_A), }="23";
{ \ar@<.7ex>^-{(-)_{\operatorname{ab}}} "12";"11";};
{ \ar@<.7ex>^-{i} "11";"12";};
{ \ar@<.7ex>^-{F} "13";"12";};
{ \ar@<.7ex>^-{G} "12";"13";};
{ \ar@<.7ex>^-{U_{(A,\lambda_A)}} "11";"21";};
{ \ar@<.7ex>^-{R_{(A,\lambda_A)}} "21";"11";};
{ \ar@<.7ex>^-{U_{(A,\lambda_A)}} "12";"22";};
{ \ar@<.7ex>^-{R_{(A,\lambda_A)}} "22";"12";};
{ \ar@<.7ex>^-{U'} "13";"23";};
{ \ar@<.7ex>^-{R'} "23";"13";};
{ \ar@<.7ex>^-{(-)_{\operatorname{ab}}^{\lambda}} "22";"21";};
{ \ar@<.7ex>^-{i^{\lambda}} "21";"22";};
{ \ar@<.7ex>^-{F^{\lambda}} "23";"22";};
{ \ar@<.7ex>^-{G^{\lambda}} "22";"23";};
\end{xy}$$
where the functors $i$ and $i^{\lambda}$ forget the abelian group
object structure maps, and where $(-)_{\operatorname{ab}}$ and
$(-)_{\operatorname{ab}}^{\lambda}$ are the respective left adjoint
functors which we now define. In the right-hand square, the top
and bottom adjunctions are adjoint equivalences of categories by
Lemma~\ref{adjunctionlemma1} and Proposition~\ref{adjunctionlemma2},
respectively. Hence, the composition of the top adjunctions in the
diagram determine the top adjunction in the left-hand square, up to
unique natural isomorphism, and similarly for the bottom
adjunctions. 

Now, we define an adjunction $(H,K,\epsilon,\eta)$ with
$K = i \circ G$ as follows. The functor $K$ takes the $A$-module $M$ to the 
canonical projection $f \colon A \ltimes M \to A$, and we let $H$
be the functor that to $f \colon B \to A$ assigns the $A$-module $A
\otimes_B\Omega_B$, and let $\epsilon$ and $\eta$ be the natural
transformations given by $\epsilon(1 \otimes d(a,x)) = x$ and $\eta(b)
= (f(b), 1 \otimes db)$, respectively. We must show that the two
composite natural transformations
$$\xymatrix{
{ H } \ar@{=>}[r]^-(.47){H \circ \eta} &
{ H \circ K \circ H } \ar@{=>}[r]^-(.47){\epsilon \circ H} &
{ H, } &
{ K } \ar@{=>}[r]^-(.47){\eta \circ K} &
{ K \circ H \circ K } \ar@{=>}[r]^-(.47){K \circ \epsilon} &
{ K } \cr
}$$
are equal to the respective identity natural transformations. But $H
\circ \eta$ maps $a \otimes db$ in $H(f \colon B \to A)$ to $a \otimes
d(f(b),1 \otimes db)$ in $(H \circ K \circ H)(f \colon B \to A)$ and
$\epsilon \circ H$, in turn, maps this element to $a \cdot (1 \otimes
db) = a \otimes db$ in $H(f \colon B \to A)$; and $\eta \circ K$ maps
$(a,x)$ in $K(M)$ to $(a,1 \otimes d(a,x))$ in $(K \circ H \circ
K)(M)$ and $K \circ \epsilon$, in turn, maps this element to $(a,x)$ in
$K(M)$. This shows that $(H,K,\epsilon,\eta)$ is an
adjunction. Similarly, we define an adjunction
$(H^{\lambda},K^{\lambda},\epsilon^{\lambda},\eta^{\lambda})$ with
$K^{\lambda} = i^{\lambda} \circ G^{\lambda}$ as follows. The functor
$K^{\lambda}$ takes the $(A,\lambda_A)$-module $(M,\lambda_A)$ to the
canonical projection $f \colon (A \ltimes M, \lambda_{A \ltimes M})
\to (A,\lambda_A)$, and we let $H^{\lambda}$ be the functor that to $f
\colon (B,\lambda_B) \to (A,\lambda_A)$ assigns the
$(A,\lambda_A)$-module $(A,\lambda_A) \otimes_{(B,\lambda_B)}
\Omega_{(B,\lambda_B)}$, and let $\epsilon^{\lambda}$ and
$\eta^{\lambda}$ be the natural transformations given by
$\smash{ \epsilon^{\lambda}(1 \otimes d(a,x)) = x }$ and
$\smash{ \eta^{\lambda}(b) = (f(b), 1 \otimes db) }$,
respectively. The change-of-rings functor that we use here was defined
in Remark~\ref{changeofrings}. The proof that the triangle
identities hold follows mutatis mutandis from the calculation in the
case of the adjunction $(H,K,\epsilon,\eta)$. This shows that
$(H^{\lambda},K^{\lambda},\epsilon^{\lambda},\eta^{\lambda})$ is an
adjunction. 

Having established the diagram of adjunctions at the beginning of the
proof, we note that the composite functors $\smash{ R_{(A,\lambda_A)}
  \circ K }$ and $\smash{ K^{\lambda} \circ R' }$ agree, up to unique
natural isomorphism. Indeed, the following diagram is cartesian,
$$\begin{xy}
(-34,7)*+{ A \ltimes \lambda_{A*}\mathbb{W}(M) }="11";
(0,7)*+{ \mathbb{W}(A) \ltimes \mathbb{W}(M) }="12";
(36,7)*+{ \mathbb{W}(A \ltimes M) }="13";
(-34,-7)*+{ A }="21";
(0,-7)*+{ \mathbb{W}(A) }="22";
(36,-7)*+{ \mathbb{W}(A). }="23";
{ \ar^-{\lambda_A \oplus \id} "12";"11";};
{ \ar^-{\operatorname{in}_{1*}+\operatorname{in}_{2*}} "13";"12";};
{ \ar^-{\pr_1} "21";"11";};
{ \ar^-{\pr_1} "22";"12";};
{ \ar^-{\pr_1} "23";"13";};
{ \ar^-{\lambda_A} "22";"21";};
{ \ar@{=} "23";"22";};
\end{xy}$$
By the uniqueness of left adjoint functors, up to unique natural
isomorphism, we conclude that also the composite functors 
$\smash{ H \circ U_{(A,\lambda_A)} }$ and $\smash{ U' \circ
  H^{\lambda} }$ agree, up to unique natural isomorphism. It follows
that the canonical natural transformation 
$$\xymatrix{
{ A \otimes_B \Omega_B } \ar[r] &
{ U'((A, \lambda_A) \otimes_{(B, \lambda_B)} \Omega_{(B,\lambda_B)}) }
\cr
}$$
is an isomorphism, and taking $(B,\lambda_B) = (A,\lambda_A)$, the theorem
follows.
\end{proof}

\begin{theorem}\label{dividedfrobenius}Let $A$ be a ring. There are
natural maps $F_n \colon \Omega_{\mathbb{W}(A)} \to
\Omega_{\mathbb{W}(A)}$ that are $F_n$-linear and satisfy that for all
$a \in \mathbb{W}(A)$,
$$F_n(da) = \sum_{e \mid n}
\Delta_{A,e}(a)^{(n/e)-1}d\Delta_{A,e}(a).$$
Moreover, the following~{\rm(1)--(3)} hold.
\begin{enumerate}
\item[{\rm(1)}] For all $m,n \in \N$, $F_mF_n = F_{mn}$, and $F_1 = \id$.
\item[{\rm(2)}] For all $n \in \N$ and $a \in \mathbb{W}(A)$, $dF_n(a) =
nF_n(da)$.
\item[\rm{(3)}] For all $n \in \N$ and $a \in A$, $F_n(d[a]) =
  [a]^{n-1}d[a]$.
\end{enumerate}
\end{theorem}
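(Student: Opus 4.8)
The plan is to deduce the theorem from Theorem~\ref{universalderivations} applied to the universal $\lambda$-ring $(\mathbb{W}(A),\Delta_A)$. By Lemma~\ref{universalderivation} the universal $\lambda$-derivation $d\colon(\mathbb{W}(A),\Delta_A)\to(\Omega_{(\mathbb{W}(A),\Delta_A)},\lambda_{\Omega_{(\mathbb{W}(A),\Delta_A)}})$ exists, and Theorem~\ref{universalderivations} identifies the underlying $\mathbb{W}(A)$-module $\Omega_{(\mathbb{W}(A),\Delta_A)}$ with $\Omega_{\mathbb{W}(A)}$ compatibly with the universal derivation $d\colon\mathbb{W}(A)\to\Omega_{\mathbb{W}(A)}$. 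I define $F_n\colon\Omega_{\mathbb{W}(A)}\to\Omega_{\mathbb{W}(A)}$ to be the module structure map $\lambda_{\Omega_{(\mathbb{W}(A),\Delta_A)},n}$ under this identification. By Proposition~\ref{universallambdaoperation}, the $n$th Adams operation of $(\mathbb{W}(A),\Delta_A)$ is $\psi_{\mathbb{W}(A),n}=w_n\circ\Delta_A=F_n$, so Remark~\ref{lambdamoduleremark} shows that $F_n$ is $F_n$-linear, that $F_1=\id$, and that $F_mF_n=F_{mn}$, which is~(1); naturality of $F_n$ in $A$ follows from the naturality of the two constructions being composed. Since $d$ is a $\lambda$-derivation, condition~(3) of Definition~\ref{lambdaderivation} applied to $a\in\mathbb{W}(A)$ yields exactly the asserted formula $F_n(da)=\sum_{e\mid n}\Delta_{A,e}(a)^{(n/e)-1}d\Delta_{A,e}(a)$; together with additivity and $F_n$-linearity this determines $F_n$ on all of $\Omega_{\mathbb{W}(A)}$, so $F_n$ is well defined.

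Property~(2) I would verify by a direct computation. By Remark~\ref{lambdaadamsoperations} (equivalently, by the defining property of the Witt components $\Delta_{A,e}$ of $\Delta_A$), the Frobenius satisfies $F_n(a)=\sum_{e\mid n}e\,\Delta_{A,e}(a)^{n/e}$ in $\mathbb{W}(A)$. Applying the de~Rham differential and the Leibniz rule term by term gives
$$dF_n(a)=\sum_{e\mid n}e\cdot\frac{n}{e}\,\Delta_{A,e}(a)^{(n/e)-1}d\Delta_{A,e}(a)=n\sum_{e\mid n}\Delta_{A,e}(a)^{(n/e)-1}d\Delta_{A,e}(a)=nF_n(da).$$

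For property~(3) the key point is to evaluate $\Delta_A$ on a Teichm\"uller representative. I claim $\Delta_A([a])=[[a]]$, the Teichm\"uller representative of $[a]\in\mathbb{W}(A)$ in $\mathbb{W}(\mathbb{W}(A))$; equivalently $\Delta_{A,1}([a])=[a]$ and $\Delta_{A,e}([a])=0$ for $e>1$. By naturality it suffices to treat $A=\Z[t]$ and $a=t$, where $\mathbb{W}(A)$ is torsion free by Lemma~\ref{invertible} and hence $\Delta_A$ is determined by ghost components. The $n$th ghost component of $\Delta_A([t])$ is $F_n([t])=[t]^n$ (verified in ghost coordinates of $\mathbb{W}(A)$, using that the Teichm\"uller map is multiplicative), which is also the $n$th ghost component of $[[t]]$; hence $\Delta_A([t])=[[t]]$. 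Substituting $\Delta_{A,e}([a])$ into the formula from~(3) of Definition~\ref{lambdaderivation}, every term with $e>1$ vanishes (since $\Delta_{A,e}([a])=0$, so $d\Delta_{A,e}([a])=0$, and $\Delta_{A,e}([a])^{(n/e)-1}=0$ when $e<n$), leaving $F_nd[a]=\Delta_{A,1}([a])^{n-1}d\Delta_{A,1}([a])=[a]^{n-1}d[a]$.

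The only step with any real content is the identification $\Delta_A([a])=[[a]]$ used in~(3); everything else is formal once Theorem~\ref{universalderivations} is available, so I do not expect a serious obstacle.
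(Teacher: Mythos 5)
Your proposal is correct and follows essentially the same route as the paper's proof: define $F_n$ as the $n$th Witt component of the $(\mathbb{W}(A),\Delta_A)$-module structure on $\Omega_{(\mathbb{W}(A),\Delta_A)}\cong\Omega_{\mathbb{W}(A)}$, read off the defining formula and properties (1) from Remark~\ref{lambdamoduleremark} and Definition~\ref{lambdaderivation}(3), verify (2) by the Leibniz computation on $F_n(a)=\sum_{e\mid n}e\Delta_{A,e}(a)^{n/e}$, and reduce (3) to $\Delta_A([a])=[[a]]$ checked in ghost coordinates over a torsion-free base. Your explicit invocation of Lemma~\ref{invertible} to justify injectivity of the ghost map on $\mathbb{W}(\mathbb{W}(A))$ is exactly the point the paper uses implicitly.
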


\begin{proof}Applying Theorem~\ref{universalderivations} to the
universal $\lambda$-ring $(\mathbb{W}(A),\Delta_A)$, we conclude that
the canonical map $\Omega_{\mathbb{W}(A)} \to
\Omega_{(\mathbb{W}(A),\Delta_A)}$ is an isomorphism. Since the target
of this map is a $(\mathbb{W}(A),\Delta_A)$-module, we have the
natural map
$$F_n = \lambda_{\Omega_{(\mathbb{W}(A),\Delta_A)},n} 
\colon \Omega_{(\mathbb{W}(A),\Delta_A)} 
\to \Omega_{(\mathbb{W}(A),\Delta_A)}$$
defined to be the $n$th Witt component of the
$(\mathbb{W}(A),\Delta_A)$-module structure map; compare
Remark~\ref{lambdamoduleremark}. It is an $F_n = w_n \circ
\Delta_A$-linear map and Definition~\ref{lambdaderivation}~(3) implies
that it is given by the stated formula. Properties~(1) and~(2) follow
immediately from the definition of a
$(\mathbb{W}(A),\Delta_A)$-module and from the calculation
$$dF_n(a) = d( \sum_{e \mid n} e\Delta_{A,e}(a)^{n/e})
= \sum_{e \mid n} n\Delta_{A,e}(a)^{(n/e)-1}d\Delta_{A,e}(a) =
nF_n(da),$$
where the first and last equality follow from the definition of
$\Delta_A$. Finally, to prove property~(3), it suffices to show that 
$\Delta_{A,e}([a])$ is equal to $[a]$, if $e = 1$, and is equal to
$0$, if $e > 1$, or equivalently, that $\Delta([a]) = [[a]]$, and this
was proved in Remark~\ref{deltaremark}.
\end{proof}

\section{The anticommutative graded algebras
  $\smash{ \hat{\Omega}_{\mathbb{W}(A)}^{\cdot} }$
and $\smash{ \check{\Omega}_{\mathbb{W}(A)}^{\cdot} }$}\label{derhamsection}

We next introduce the anticommutative graded $\mathbb{W}(A)$-algebra
$\smash{ \hat{\Omega}_{\mathbb{W}(A)}^{\cdot} }$. It agrees with the
alternating algebra $\smash{ \Omega_{\mathbb{W}(A)}^{\cdot} = 
  \bigwedge_{\mathbb{W}(A)}\Omega_{\mathbb{W}(A)}^1 }$, if the element
$$d\log[-1] = [-1]^{-1}d[-1] \in \Omega_{\mathbb{W}(A)}^1$$
is zero, but is different, in general. We note that $2d\log[-1] =
d\log[1] = 0$ and that, by Lemma~\ref{variousrelations}~(v) and
Theorem~\ref{dividedfrobenius}~(3), $F_n(d\log[-1]) = d\log[-1]$ for
all $n \in \N$. 

\begin{definition}\label{omegahat}Let $A$ be a ring. The graded
$\mathbb{W}(A)$-algebra
$$\hat{\Omega}_{\mathbb{W}(A)}^{\cdot} =
T_{\mathbb{W}(A)}^{\cdot}\Omega_{\mathbb{W}(A)}^1 /
J^{\cdot}$$
is defined to be the quotient of the tensor algebra of the
$\mathbb{W}(A)$-module $\Omega_{\mathbb{W}(A)}^1$ by the graded ideal
generated by all elements of the form
$$da \otimes da - d\log[-1] \otimes F_2(da)$$
with $a \in \mathbb{W}(A)$.
\end{definition}

We remark that the defining relation $da \cdot da = d\log[-1] \cdot
F_2(da)$ is analogous to the relation $\{a,a\} = \{-1,a\}$ in Milnor
$K$-theory. 

\begin{lemma}\label{antisymmetric}The graded $\mathbb{W}(A)$-algebra
$\hat{\Omega}_{\mathbb{W}(A)}^{\cdot}$ is anticommutative.
\end{lemma}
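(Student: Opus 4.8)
The plan is to reduce the claim to a single quadratic relation among the generators $da$, $a \in \mathbb{W}(A)$, and then propagate it through the whole graded algebra. Recall first that a graded ring $\Lambda^{\boldsymbol{\cdot}}$ is anti-symmetric if $\omega\eta = (-1)^{ij}\eta\omega$ whenever $\omega \in \Lambda^i$ and $\eta \in \Lambda^j$; this is strictly weaker than being alternating, which would in addition demand $\omega^2 = 0$ for $\omega$ of odd degree, and indeed $\hat{\Omega}_{\mathbb{W}(A)}^{\boldsymbol{\cdot}}$ is not alternating in general because $(da)^2 = d\log[-1]\cdot F_2 da$. Since $\Omega_{\mathbb{W}(A)}^1$ is generated as a $\mathbb{W}(A)$-module by the elements $da$ and the tensor algebra is taken over the commutative ring $\mathbb{W}(A)$, the degree-$n$ component of $\hat{\Omega}_{\mathbb{W}(A)}^{\boldsymbol{\cdot}}$ is spanned, as an abelian group, by the monomials $r\cdot da_1\cdots da_n$ with $r,a_1,\dots,a_n \in \mathbb{W}(A)$, and the degree-zero component $\mathbb{W}(A)$ is central. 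Hence it will be enough to establish that
$$da\cdot db + db\cdot da = 0 \quad\text{in}\quad \hat{\Omega}_{\mathbb{W}(A)}^2$$
for all $a,b \in \mathbb{W}(A)$.

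To prove this relation I would apply the defining relation of Definition~\ref{omegahat} with $a+b$ in place of $a$. Because $d$ is a derivation, $d(a+b) = da + db$, so in the tensor algebra
$$d(a+b)\otimes d(a+b) = da\otimes da + da\otimes db + db\otimes da + db\otimes db,$$
while, because $F_2 \colon \Omega_{\mathbb{W}(A)} \to \Omega_{\mathbb{W}(A)}$ is $F_2$-linear and in particular additive by Theorem~\ref{dividedfrobenius},
$$d\log[-1]\otimes F_2 d(a+b) = d\log[-1]\otimes F_2 da + d\log[-1]\otimes F_2 db.$$
Subtracting, and using that the three elements $da\otimes da - d\log[-1]\otimes F_2 da$, $db\otimes db - d\log[-1]\otimes F_2 db$, and $d(a+b)\otimes d(a+b) - d\log[-1]\otimes F_2 d(a+b)$ all lie in the graded ideal $J^{\boldsymbol{\cdot}}$, one obtains that $da\otimes db + db\otimes da$ lies in $J^{\boldsymbol{\cdot}}$, which is exactly the displayed relation in the quotient.

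Granting this, I would conclude as follows: given monomials $\omega = r\cdot da_1\cdots da_i$ and $\eta = s\cdot db_1\cdots db_j$, the scalars $r$ and $s$ are central, and moving each of $db_1,\dots,db_j$ to the left past each of $da_1,\dots,da_i$ requires $ij$ transpositions of adjacent degree-one factors, each contributing a sign $-1$, so $\omega\eta = (-1)^{ij}\eta\omega$; since such monomials span $\hat{\Omega}_{\mathbb{W}(A)}^{\boldsymbol{\cdot}}$, the algebra is anti-symmetric. I do not expect a genuine obstacle here: the only points requiring care are the additivity of $F_2$ on $\Omega_{\mathbb{W}(A)}^1$, which is precisely what Theorem~\ref{dividedfrobenius} provides, and the bookkeeping that the two-sided ideal $J^{\boldsymbol{\cdot}}$ is spanned by the stated generators together with their left $\mathbb{W}(A)$-multiples and two-sided tensor multiples, so that membership of $da\otimes db + db\otimes da$ in $J^{\boldsymbol{\cdot}}$ really yields the relation in every degree. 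One may also note in passing that the case $a=b$ is consistent, since it forces $2\,d\log[-1]\cdot F_2 da = 0$, which holds because $2\,d\log[-1] = d[1] = 0$.
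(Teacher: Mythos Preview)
Your proof is correct and follows essentially the same approach as the paper: reduce to showing $da\cdot db + db\cdot da = 0$ in degree two by applying the defining relation to $a+b$ and using the additivity of $F_2d$. The paper's version is terser, computing directly in the quotient rather than checking membership in $J^{\boldsymbol{\cdot}}$, and it omits the routine propagation to higher degrees that you spell out, but the argument is the same.
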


\begin{proof}It suffices to show that the sum
$da \cdot db + db \cdot da \in \hat{\Omega}_{\mathbb{W}(A)}^2$ is
equal to zero for all $a,b \in \mathbb{W}(A)$. Now, on the one hand, we have
$$d(a+b) \cdot d(a+b) = d\log[-1] \cdot F_2d(a+b) = d\log[-1] \cdot
F_2da + d\log[-1] \cdot F_2db,$$
since $F_2d$ is additive, and on the other hand, we have
$$\begin{aligned}
{} & d(a+b) \cdot d(a+b) = da \cdot da + da \cdot db + db \cdot da +
db \cdot db \cr
{} & = d\log[-1] \cdot F_2da + da \cdot db + db \cdot da + d\log[-1]
\cdot F_2db \cr
\end{aligned}$$
This shows that $da \cdot db + db \cdot da$ is zero as desired.
\end{proof}

\begin{prop}\label{uniquederivation}There exists a unique graded derivation
$$d \colon \hat{\Omega}_{\mathbb{W}(A)}^{\cdot}
\to \hat{\Omega}_{\mathbb{W}(A)}^{\cdot}$$
that extends the derivation $d \colon \mathbb{W}(A) \to
\Omega_{\mathbb{W}(A)}^1$ and satisfies the formula
$$dd\omega = d\log[-1] \cdot d\omega$$
for all $\omega \in
\hat{\Omega}_{\mathbb{W}(A)}^{\cdot}$. Moreover, the element
$d\log[-1]$ is a cycle.
\end{prop}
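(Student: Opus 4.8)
The goal is to build a graded derivation $d$ on $\hat\Omega_{\mathbb{W}(A)}^{\boldsymbol{\cdot}}$ extending the universal derivation on $\mathbb{W}(A)$, subject to the twisted square-zero condition $dd\omega = d\log[-1]\cdot d\omega$, and to check it is well-defined, unique, and that $d(d\log[-1]) = 0$. The natural strategy is: first construct a derivation $\tilde d$ on the tensor algebra $T_{\mathbb{W}(A)}^{\boldsymbol{\cdot}}\Omega_{\mathbb{W}(A)}^1$, then show it descends to the quotient by the graded ideal $J^{\boldsymbol{\cdot}}$. For the construction on the tensor algebra, I would define $\tilde d$ on generators by $\tilde d(a) = da$ for $a\in\mathbb{W}(A)$ and $\tilde d(da) = d\log[-1]\cdot da$ on the degree-one part, and then extend to all of $T^{\boldsymbol{\cdot}}$ by forcing the graded Leibniz rule $\tilde d(\omega\cdot\eta) = \tilde d\omega\cdot\eta + (-1)^{|\omega|}\omega\cdot\tilde d\eta$. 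One must check this is consistent — i.e. that the prescription on a product of generators is independent of how the product is parenthesized — which is automatic because the Leibniz rule is associative-compatible, and check additivity and that it is a well-defined $\mathbb{Z}$-linear (not $\mathbb{W}(A)$-linear) endomorphism raising degree by one. At this stage $\tilde d\tilde d$ on $T^{\boldsymbol{\cdot}}$ is the operator $\omega\mapsto d\log[-1]\cdot\omega$ up to sign bookkeeping; I would verify this identity on generators and propagate it by Leibniz, also using $\tilde d(d\log[-1]) = d\log[-1]\cdot d\log[-1]$, which by the defining relation of $\hat\Omega^{\boldsymbol{\cdot}}$ (applied to $a = [-1]$, noting $F_2 d[-1] = [-1]d[-1] = d\log[-1]$ by Theorem~\ref{dividedfrobenius}(3) and $[-1]^2 = [1]$) equals... and here one sees the subtlety that some of these identities only hold modulo $J^{\boldsymbol{\cdot}}$, so the cleanest route is to do the $\tilde d\tilde d$ computation after passing to the quotient.

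\textbf{Descent to the quotient.} The main obstacle is showing $\tilde d(J^{\boldsymbol{\cdot}})\subseteq J^{\boldsymbol{\cdot}}$, so that $\tilde d$ induces $d$ on $\hat\Omega_{\mathbb{W}(A)}^{\boldsymbol{\cdot}}$. Since $J^{\boldsymbol{\cdot}}$ is the two-sided graded ideal generated by the elements $r_a := da\otimes da - d\log[-1]\otimes F_2da$, and $\tilde d$ satisfies the Leibniz rule, it suffices to show $\tilde d(r_a)\in J^{\boldsymbol{\cdot}}$ for each $a\in\mathbb{W}(A)$ (a product $\omega\cdot r_a\cdot\eta$ maps under $\tilde d$ to a sum of terms each containing $r_a$ or $\tilde d r_a$ as a factor). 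Computing: $\tilde d(da\otimes da) = d\log[-1]\cdot da\cdot da - da\cdot(d\log[-1]\cdot da)$, and one must rewrite $da\cdot da$ using $r_a$ and handle the commutation of $d\log[-1]$ past $da$ using anti-symmetry (Lemma~\ref{antisymmetric}); similarly $\tilde d(d\log[-1]\otimes F_2 da)$ expands via $\tilde d(d\log[-1])$ and $\tilde d(F_2 da)$, where $F_2 da$ is a sum of terms $\Delta_{A,e}([\,]\cdots)$-type expressions in $\Omega^1$, so $\tilde d(F_2 da) = d\log[-1]\cdot F_2 da$ again lands in degree-two generators. Carefully bookkeeping the $d\log[-1]$ factors and the signs, and repeatedly invoking the relation $r_a\equiv 0$, one finds $\tilde d(r_a)$ is a $\mathbb{W}(A)$-combination of elements of $J^{\boldsymbol{\cdot}}$. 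I expect this sign-and-generator chase to be the genuinely delicate computation; the key enabling facts are anti-symmetry of $\hat\Omega^{\boldsymbol{\cdot}}$, that $F_2$ is additive (Theorem~\ref{dividedfrobenius}), and the identity $\tilde d(F_2 da) = d\log[-1]\cdot F_2 da$ which follows since $F_2 da\in\Omega^1_{\mathbb{W}(A)}$ is a sum of exact one-forms $d(\text{something})$ and $\tilde d d = d\log[-1]\cdot d$ on such.

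\textbf{Uniqueness and the cycle property.} Uniqueness is immediate: any graded derivation $d'$ extending $d\colon\mathbb{W}(A)\to\Omega^1$ and satisfying $d'd'\omega = d\log[-1]\cdot d'\omega$ agrees with the constructed $d$ on the degree-zero part $\mathbb{W}(A)$, hence on the exact one-forms $da = d'(a)$, and then on all of degree one by $\mathbb{W}(A)$-linearity (every element of $\Omega^1_{\mathbb{W}(A)}$ is a sum $\sum b_i\, da_i$, and $d'(b_i\, da_i) = db_i\cdot da_i + b_i\, d'd'(a_i) = db_i\cdot da_i + b_i\, d\log[-1]\cdot da_i = d(b_i\,da_i)$); since $\hat\Omega^{\boldsymbol{\cdot}}$ is generated as an algebra in degrees $\le 1$, the Leibniz rule forces $d' = d$ everywhere. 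Finally, for the cycle property: $d\log[-1] = [-1]^{-1}d[-1]$, and by Theorem~\ref{dividedfrobenius}(3), $F_2 d[-1] = [-1]^{2-1}d[-1] = [-1]d[-1]$, so the defining relation $r_{[-1]}$ reads $d[-1]\cdot d[-1] = d\log[-1]\cdot[-1]d[-1] = [-1]\,d\log[-1]\cdot d[-1]$; multiplying by the unit $[-1]^{-1} = [-1]$ and using that $d[-1]\cdot d[-1] = ([-1]\,d\log[-1])\cdot([-1]\,d\log[-1]) = d\log[-1]\cdot d\log[-1]$ (as $[-1]$ is central and $[-1]^2 = [1]$), one gets $d\log[-1]\cdot d\log[-1] = d\log[-1]\cdot d\log[-1]$, consistently, and more usefully $d(d\log[-1]) = d([-1]^{-1})\cdot d[-1] + [-1]^{-1}\,dd[-1] = -[-1]^{-2}d[-1]\cdot d[-1] + [-1]^{-1}d\log[-1]\cdot d[-1] = -d\log[-1]\cdot d\log[-1] + d\log[-1]\cdot d\log[-1] = 0$, where I used $d([-1]^{-1}) = -[-1]^{-2}d[-1] = -[-1]d[-1] = -d\log[-1]$ and $dd[-1] = d\log[-1]\cdot d[-1]$. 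So $d\log[-1]$ is a cycle.
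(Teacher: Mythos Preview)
Your strategy is essentially the same as the paper's---define $d$ by a formula forced by the Leibniz rule together with $d(da)=d\log[-1]\cdot da$, then check well-definedness---but your framing contains a genuine gap. You propose to construct $\tilde d$ on the tensor algebra $T_{\mathbb{W}(A)}^{\boldsymbol{\cdot}}\Omega^1_{\mathbb{W}(A)}$ first and then descend through $J^{\boldsymbol{\cdot}}$. This does not work: $\tilde d$ is not well-defined on $T^{\boldsymbol{\cdot}}$. The degree-one piece $T^1=\Omega^1_{\mathbb{W}(A)}$ is already a quotient by the Leibniz relation $d(ab)-a\,db-b\,da$, and applying your prescription to that relation yields $-(da\otimes db+db\otimes da)\in T^2$, which is nonzero in $\Omega^1\otimes_{\mathbb{W}(A)}\Omega^1$ in general. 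It only vanishes after passing to $\hat\Omega_{\mathbb{W}(A)}^{\boldsymbol{\cdot}}$, by the anti-symmetry Lemma~\ref{antisymmetric}. Your remark that consistency ``is automatic because the Leibniz rule is associative-compatible'' misses exactly this check. The paper avoids the issue by writing down the closed formula $d(a_0\,da_1\cdots da_q)=da_0\,da_1\cdots da_q+q\,d\log[-1]\cdot a_0\,da_1\cdots da_q$ directly on $\hat\Omega^{\boldsymbol{\cdot}}$ and verifying well-definedness against \emph{both} the $\Omega^1$-relation $d(ab)=a\,db+b\,da$ and the ideal $J^{\boldsymbol{\cdot}}$.

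There is also a smaller error in your descent computation: you assert that $\tilde d(F_2da)=d\log[-1]\cdot F_2da$ because $F_2da$ is ``a sum of exact one-forms''. It is not: $F_2da=a\,da+d\Delta_2(a)$, and $a\,da$ is not exact when $2$ is not invertible. If you compute honestly in $\hat\Omega^2$ you get $\tilde d(F_2da)=da\cdot da+d\log[-1]\cdot F_2da=2\,d\log[-1]\cdot F_2da=0$, not $d\log[-1]\cdot F_2da$. This does not break the argument---the paper's verification that $d(da\cdot da-d\log[-1]\cdot F_2da)$ lies in $J^{\boldsymbol{\cdot}}$ goes through with the correct value---but your stated reason is wrong. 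Your uniqueness argument and your computation of $d(d\log[-1])=0$ are fine and match the paper.
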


\begin{proof}The relation $dd\omega = d\log[-1] \cdot
  d\omega$
 implies that $d\log[-1]$ is a cycle for the desired derivation $d$. Indeed,
$$\begin{aligned}
d(d\log[-1]) & = d([-1]d[-1]) = d[-1] \cdot d[-1] + [-1] \cdot dd[-1] \cr
{} & = d\log[-1] \cdot F_2d[-1] + [-1]d\log[-1] \cdot d[-1] \cr
{} & = d\log[-1] \cdot [-1]d[-1] + d\log[-1] \cdot [-1]d[-1] \cr
\end{aligned}$$
which is zero by Lemma~\ref{antisymmetric}. This proves that the
desired derivation $d$ necessarily is unique in that for all
$a_0,a_1,\dots,a_q \in \mathbb{W}(A)$, the following formula must hold,
$$d(a_0da_1 \dots da_q) = da_0da_1 \cdots da_q + qd\log[-1] \cdot
a_0da_1\dots da_q.$$
Here $qd\log[-1]$ is equal to either $d\log[-1]$ or
$0$ as $q$ is odd or even. To complete the proof, it
remains to prove that the map $d$ given by this formula is
(a)~well-defined, (b)~a graded derivation, and (c)~satisfies
$dd\omega = d\log[-1] \cdot d\omega$. First, we have
$$\begin{aligned}
{} & d(a_0da_1 \dots da_p \cdot b_0db_1 \dots db_q)
= d(a_0b_0da_1 \dots da_p db_1 \dots db_q) \cr
{} & = d(a_0b_0)da_1 \dots da_pdb_1 \dots db_q
+ (p+q)d\log[-1] \cdot a_0b_0da_1 \dots da_pdb_1 \dots db_q \cr
{} & = da_0da_1 \dots da_p \cdot b_0db_1 \dots db_q + pd\log[-1] \cdot
a_0da_1 \dots da_p \cdot b_0db_1 \dots db_q \cr
{} & \hskip3mm +(-1)^p( a_0da_1\dots da_p \cdot db_0db_1 \dots db_q
+ a_0da_1\dots da_p \cdot qd\log[-1] \cdot b_0db_1 \dots db_q) \cr
{} & =  d(a_0da_1 \dots da_p) \cdot b_0db_1 \dots db_q + (-1)^p a_0da_1
\dots da_p \cdot d(b_0db_1 \dots db_q) \cr
\end{aligned}$$
which proves~(b). Next, using that $q^2+q$ is always even, we find that
$$\begin{aligned}
{} & dd(a_0da_1\dots da_q) = d(da_0da_1\dots da_q + q d\log[-1] \cdot
a_0da_1\dots da_q) \cr
{} & = (q+1)d\log[-1] \cdot da_0da_1 \dots da_q - qd\log[-1] \cdot
da_0da_1\dots da_q \cr
{} & \hskip7mm - qd\log[-1] \cdot qd\log[-1] \cdot a_0da_1 \dots da_q
\cr
{} & = d\log[-1] \cdot (da_0da_1 \dots da_q + qd\log[-1] \cdot a_0da_1
\dots da_q ) \cr
{} & = d\log[-1] \cdot d(a_0da_1 \dots da_q) \cr
\end{aligned}$$
which proves~(c). Finally, to prove~(a), we must show that for all
$a,b \in \mathbb{W}(A)$, the elements $d(d(ab) - bda -adb)$ and
$d(dada - d\log[-1] \cdot F_2da)$ of
$\hat{\Omega}_{\mathbb{W}(A)}^{\cdot}$ are zero.
First, using Lemma~\ref{antisymmetric} together with~(b) and~(c), we
find that
$$\begin{aligned}
{} & d(d(ab) - bda - adb) = dd(ab) - dbda -bdda - dadb - addb \cr
{} & = d\log[-1] \cdot d(ab) - d\log[-1] \cdot bda - d\log[-1] \cdot
adb \cr
{} & = d\log[-1] \cdot (d(ab) - bda - adb) \cr
\end{aligned}$$
which is zero, since $d \colon \mathbb{W}(A) \to
\hat{\Omega}_{\mathbb{W}(A)}^1$ is a derivation. This shows that the
first type of elements are zero. Next,~(b) and~(c) show that
$$d(dada) = 2d\log[-1] \cdot dada$$
which is zero as is
$$\begin{aligned}
d(d\log[-1] \cdot F_2da) & = d\log[-1] \cdot dF_2da 
= d\log[-1] \cdot d(ada + d\Delta_2(a)) \cr
{} & = d\log[-1] \cdot (dada + d\log[-1] \cdot F_2da) \cr
\end{aligned}$$
by the definition of $\hat{\Omega}_{\mathbb{W}(A)}^2$. Hence also
the second type of elements are zero. This completes the proof of~(a)
and hence of the proposition. 
\end{proof}

\begin{remark}\label{comparisonwithderhamcomplex}In general, there
is no $\mathbb{W}(A)$-algebra map $\smash{ f \colon
\hat{\Omega}_{\mathbb{W}(A)}^{\cdot} \to
\Omega_{\mathbb{W}(A)}^{\cdot} }$ that is compatible with
the derivations.
\end{remark}

\begin{prop}\label{omegafrobenius}Let $A$ be a ring and let $n$ be a
positive integer. There is a unique homomorphism of graded rings
$$F_n \colon \hat{\Omega}_{\mathbb{W}(A)}^{\cdot} \to
\hat{\Omega}_{\mathbb{W}(A)}^{\cdot}$$
that is given by the maps $F_n \colon \mathbb{W}(A) \to \mathbb{W}(A)$
and $F_n \colon \Omega_{\mathbb{W}(A)}^1 \to
\Omega_{\mathbb{W}(A)}^1$ in degrees $0$ and $1$,
respectively. In addition,  the following formula holds.
$$dF_n = nF_nd$$
\end{prop}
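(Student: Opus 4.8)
The plan is to build $F_n$ on $\hat{\Omega}_{\mathbb{W}(A)}^{\boldsymbol{\cdot}}$ by first defining it on the tensor algebra, then checking that it preserves the defining ideal, and finally to establish $dF_n=nF_nd$ by a derivation argument that reduces it to the case of exact $1$-forms. By Theorem~\ref{dividedfrobenius} we are given the ring homomorphism $F_n\colon\mathbb{W}(A)\to\mathbb{W}(A)$ and an $F_n$-linear map $F_n\colon\Omega_{\mathbb{W}(A)}^1\to\Omega_{\mathbb{W}(A)}^1$. Since $\Omega_{\mathbb{W}(A)}^1$ is a $\mathbb{W}(A)$-module and the latter map is semilinear over the former, the $q$-fold tensor powers $F_n^{\otimes q}$ over $\mathbb{W}(A)$ are well-defined --- one has $F_n(c\omega)\otimes F_n(\omega')=F_n(\omega)\otimes F_n(c\omega')$ for $c\in\mathbb{W}(A)$ and $\omega,\omega'\in\Omega_{\mathbb{W}(A)}^1$ --- and assemble into a homomorphism of graded rings $F_n\colon T_{\mathbb{W}(A)}^{\boldsymbol{\cdot}}\Omega_{\mathbb{W}(A)}^1\to T_{\mathbb{W}(A)}^{\boldsymbol{\cdot}}\Omega_{\mathbb{W}(A)}^1$. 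To see that this descends to $\hat{\Omega}_{\mathbb{W}(A)}^{\boldsymbol{\cdot}}=T_{\mathbb{W}(A)}^{\boldsymbol{\cdot}}\Omega_{\mathbb{W}(A)}^1/J^{\boldsymbol{\cdot}}$ it is enough, $F_n$ being a graded ring map and $J^{\boldsymbol{\cdot}}$ being generated by the elements $da\otimes da-d\log[-1]\otimes F_2da$, to check that $F_n$ carries each such generator into $J^{\boldsymbol{\cdot}}$. Using Theorem~\ref{dividedfrobenius}(1) and~(3), together with $F_n([-1])=[(-1)^n]$ and $[-1]^2=[1]$, one finds $F_n(d\log[-1])=d\log[-1]$ and $F_nF_2da=F_2F_nda$, so that the image of the generator is $\omega\otimes\omega-d\log[-1]\otimes F_2\omega$ with $\omega=F_nda$. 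Hence it suffices to establish the sub-lemma that $\omega\otimes\omega-d\log[-1]\otimes F_2\omega$ lies in $J^{\boldsymbol{\cdot}}$ for \emph{every} $\omega\in\Omega_{\mathbb{W}(A)}^1$. For $\omega=da$ this is the defining relation; for $\omega=c\,da$ the left-hand side is congruent modulo $J^{\boldsymbol{\cdot}}$ to $d\log[-1]\otimes(c^2-F_2(c))F_2da$, which is divisible by $2$ by Lemma~\ref{pfrobenius} and therefore vanishes since $d\log[-1]$ is $2$-torsion; and it passes to arbitrary sums because $F_2$ is additive and $\omega\otimes\omega'+\omega'\otimes\omega\in J^{\boldsymbol{\cdot}}$ by the anti-symmetry of Lemma~\ref{antisymmetric}. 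Uniqueness of $F_n$ on $\hat{\Omega}_{\mathbb{W}(A)}^{\boldsymbol{\cdot}}$ is immediate, that ring being generated by its components in degrees $0$ and $1$.

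For the identity $dF_n=nF_nd$ I would observe that, $F_n$ being a homomorphism of graded rings and $d$ a graded derivation, the operator $\delta=dF_n-nF_nd$ satisfies the twisted Leibniz rule $\delta(xy)=\delta(x)F_n(y)+(-1)^{|x|}F_n(x)\delta(y)$; as $\hat{\Omega}_{\mathbb{W}(A)}^{\boldsymbol{\cdot}}$ is generated as a ring by $\mathbb{W}(A)$ and $\{\,da\mid a\in\mathbb{W}(A)\,\}$, it is enough to see that $\delta$ vanishes on these. On $\mathbb{W}(A)$ it vanishes by Theorem~\ref{dividedfrobenius}(2), and on $da$ it asserts $d(F_nda)=n\,d\log[-1]\cdot F_nda$, using $F_n(d\log[-1])=d\log[-1]$ and $dd\omega=d\log[-1]\cdot d\omega$. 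To prove this I would bring in the untwisted differential $D$ on $\hat{\Omega}_{\mathbb{W}(A)}^{\boldsymbol{\cdot}}$ defined in degree $q$ by $D\omega=d\omega-q\,d\log[-1]\cdot\omega$; then $D(a_0da_1\cdots da_q)=da_0da_1\cdots da_q$, $D$ is a graded derivation, $D^2=0$, and $d(F_nda)=D(F_nda)+d\log[-1]\cdot F_nda$. Expanding the formula of Theorem~\ref{dividedfrobenius} for $F_nda$ and using $(dc)^2=d\log[-1]\cdot F_2dc$ gives $D(F_nda)=\sum_{e\mid n}((n/e)-1)\,\Delta_{A,e}(a)^{(n/e)-2}\,d\log[-1]\cdot F_2d\Delta_{A,e}(a)$, which lies in $d\log[-1]\cdot\hat{\Omega}_{\mathbb{W}(A)}^1$ and hence is annihilated by $2$; and applying $d$ to the identity $nF_nda=dF_na$ of Theorem~\ref{dividedfrobenius}(2) and using $dd\omega=d\log[-1]\cdot d\omega$ shows that $D(F_nda)$ is also annihilated by $n$.

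When $n$ is odd these two vanishings force $D(F_nda)=0$, and then $d(F_nda)=d\log[-1]\cdot F_nda=n\,d\log[-1]\cdot F_nda$ because $n-1$ is even. When $n$ is even, both sides of the asserted identity are zero, so one must show $d(F_nda)=0$: writing $n=2^v m$ with $m$ odd and $v\ge1$ and putting $\omega=F_mda$, the odd case gives $D\omega=0$, while the direct computation $D(F_2\eta)=d\log[-1]\cdot F_2\eta+2\,F_2(D\eta)$ for $\eta\in\hat{\Omega}_{\mathbb{W}(A)}^1$ --- which rests on $D(F_2c)=dF_2c=2F_2dc$ and $D(F_2da)=(da)^2=d\log[-1]\cdot F_2da$ --- yields inductively $D(F_2^k\omega)=d\log[-1]\cdot F_2^k\omega$ for all $k\ge1$, so that $d(F_nda)=D(F_2^v\omega)+d\log[-1]\cdot F_2^v\omega=2\,d\log[-1]\cdot F_2^v\omega=0$. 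The main obstacle is precisely this step $d(F_nda)=n\,d\log[-1]\cdot F_nda$, and within it the even case: one cannot simply divide by $n$ in $\hat{\Omega}_{\mathbb{W}(A)}^{\boldsymbol{\cdot}}$, so the $2$-torsion introduced by $d\log[-1]$ has to be traced by hand through the auxiliary differential $D$ and its compatibility with $F_2$.
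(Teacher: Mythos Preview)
Your argument is correct, but it takes a genuinely different route from the paper's. For well-definedness, the paper reduces at once to a prime $p$ and computes $F_p(da)\cdot F_p(da)$ directly from the two-term formula $F_p(da)=a^{p-1}da+d\Delta_p(a)$, using that $F_2(a)\equiv a^2\bmod 2\mathbb{W}(A)$; you instead prove the stronger structural fact that $\omega\otimes\omega-d\log[-1]\otimes F_2\omega\in J^{\boldsymbol{\cdot}}$ for \emph{every} $\omega\in\Omega_{\mathbb{W}(A)}^1$, which is elegant and sidesteps the reduction to primes. For the relation $dF_n=nF_nd$, both proofs reduce via the twisted Leibniz rule to the single claim $d(F_nda)=n\,d\log[-1]\cdot F_nda$ on exact $1$-forms; the paper again passes to a prime and checks this in two lines (for $p$ odd one gets $(p-1)a^{p-2}(da)^2+d\log[-1]\cdot F_pda$, and the first term is killed by the even coefficient; for $p=2$ both sides vanish). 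Your approach with the auxiliary differential $D$ and the inductive formula $D(F_2\eta)=d\log[-1]\cdot F_2\eta+2F_2(D\eta)$ is more elaborate but has the virtue of handling all $n$ uniformly without the explicit two-term formula for $F_p$. The paper's computation is shorter; your sub-lemma for well-definedness is a cleaner statement that might be of independent use.
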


\begin{proof}The uniqueness statement is clear: The map $F_n$ is
necessarily given by
$$F_n(a_0da_1\dots da_q) = F_n(a_0)F_n(da_1) \dots F_n(da_q)$$
where $a_0, \dots, a_q \in \mathbb{W}(A)$. We show that this formula
gives a well-defined map. To prove this, we must show that for every $a \in
\mathbb{W}(A)$,
$$F_n(da)F_n(da) = F_n(d\log[-1])F_n(F_2da).$$
It will suffice to let $n = p$ be a prime number. In this case, we find that
$$\begin{aligned}
{} & F_p(da) F_p(da) 
= (a^{p-1}da + d\Delta_p(a)) \cdot (a^{p-1}da + d\Delta_p(a)) \cr
{} & = (a^{p-1})^2da \cdot da + d\Delta_p(a) \cdot d\Delta_p(a) 
= d\log[-1] \cdot ((a^{p-1})^2F_2da + F_2d\Delta_p(a)) \cr
{} & = d\log[-1] \cdot (F_2(a^{p-1})F_2da + F_2d\Delta_p(a)) 
= d\log[-1] \cdot F_2F_pda \cr
{} & = F_p(d\log[-1] \cdot F_2da) \cr
\end{aligned}$$
where we have used that $F_2(a)$ is congruent to $a^2$ modulo
$2\mathbb{W}(A)$. This shows that the map $F_n$ is well-defined. It is
a graded ring homomorphism by definition. 

We next prove the formula $dF_n = nF_nd$. Again, we may assume that $n
= p$ is a prime number. We already know from the definition of
$F_n \colon \Omega_{\mathbb{W}(A)}^1 \to \Omega_{\mathbb{W}(A)}^1$
that for all $a \in \mathbb{W}(A)$, $dF_p(a) = pF_p(a)$. Now, for all
$a \in \mathbb{W}(A)$, 
$$dF_p(da) = d(a^{p-1}da + d\Delta_p(a))
= (p-1)a^{p-2}dada + d\log[-1] \cdot F_pda$$
which is equal to zero for $p = 2$, and equal to $d\log[-1] \cdot
F_pda$ for $p$ odd. Hence, for every prime $p$ and every $a \in
\mathbb{W}(A)$, we have 
$$dF_p(da) = pd\log[-1] \cdot F_pda = pF_p(d\log[-1] \cdot da) =
pF_pd(da)$$
as desired. Now, let $a_0,\dots,a_q \in \mathbb{W}(A)$. We find
$$\begin{aligned}
{} & dF_p(a_0da_1\dots da_q)
= d(F_p(a_0)F_pda_1 \dots F_pda_q) \cr
{} & = dF_p(a_0)F_pda_1 \dots F_pda_q 
+ \sum_{1 \leqslant i \leqslant q} (-1)^{i-1}
F_p(a_0)F_pda_1 \dots dF_pda_i \dots F_pda_q \cr
{} & = pF_pd(a_0)F_pda_1\dots F_pda_q
+ \sum_{1 \leqslant i \leqslant q} (-1)^{i-1}
F_p(a_0)F_pda_1 \dots pF_pd(da_i) \dots F_pda_q \cr
{} & = pF_pd(a_0da_1 \dots da_q) \cr
\end{aligned}$$
as stated. This completes the proof.
\end{proof}

We next define the quotient graded algebra $\smash{
  \check{\Omega}_{\mathbb{W}(A)}^{\cdot} }$ of the graded
algebra $\smash{ \hat{\Omega}_{\mathbb{W}(A)}^{\cdot} }$
and show that the Frobenius $F_n$ and derivation $d$ descend to this
quotient.

\begin{definition}\label{omegacheck}Let $A$ be a ring. The graded
$\mathbb{W}(A)$-algebra
$$\check{\Omega}_{\mathbb{W}(A)}^{\cdot} 
=\hat{\Omega}_{\mathbb{W}(A)}^{\cdot} /
K^{\cdot}$$
is defined to be the quotient by the graded
ideal $K^{\cdot}$ generated by the elements
$$F_pdV_p(a) - da - (p-1)d\log[-1] \cdot a$$
where $p$ ranges over all prime numbers and $a$ over all elements of $\mathbb{W}(A)$.
\end{definition}

We remark that the element $F_pdV_p(a) - da - (p-1)d\log[-1] \cdot a$
is annihilated by $p$. In particular, it is zero, if $p$ is invertible
in $A$, and hence, in $\mathbb{W}(A)$. 

\begin{lemma}\label{frobeniusdescends}The Frobenius $F_n
  \colon \hat{\Omega}_{\mathbb{W}(A)}^{\cdot} \to
\hat{\Omega}_{\mathbb{W}(A)}^{\cdot}$ induces a map of graded algebras
$$F_n \colon \check{\Omega}_{\mathbb{W}(A)}^{\cdot} \to 
\check{\Omega}_{\mathbb{W}(A)}^{\cdot}.$$
\end{lemma}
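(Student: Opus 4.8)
To show that $F_n$ descends to $\check{\Omega}_{\mathbb{W}(A)}^{\boldsymbol{\cdot}}$, I need to check that $F_n$ maps the graded ideal $K^{\boldsymbol{\cdot}}$ into itself. Since $K^{\boldsymbol{\cdot}}$ is generated as a graded ideal by the degree-one elements $F_pdV_p(a) - da - (p-1)d\log[-1]\cdot a$ with $p$ prime and $a \in \mathbb{W}(A)$, and since $F_n$ is a homomorphism of graded rings by Proposition~\ref{omegafrobenius}, it suffices to show that $F_n$ carries each such generator into $K^{\boldsymbol{\cdot}}$. Moreover, using the multiplicativity $F_mF_n = F_{mn}$ from Theorem~\ref{dividedfrobenius}(1), it is enough to treat the case where $n = \ell$ is itself a prime number; the general case then follows by factoring $n$ into primes.

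\textbf{The two cases.} So fix primes $p$ and $\ell$ and an element $a \in \mathbb{W}(A)$, and consider $F_\ell\bigl(F_pdV_p(a) - da - (p-1)d\log[-1]\cdot a\bigr)$. There are two cases according to whether $\ell \neq p$ or $\ell = p$. When $\ell \neq p$, the Frobenius and Verschiebung commute, $F_\ell V_p = V_p F_\ell$, by Lemma~\ref{variousrelations}(iv); combining this with $F_\ell F_p = F_p F_\ell$, with the fact that $F_\ell$ commutes with $d$ up to the factor $\ell$ in the appropriate sense (here $dF_\ell = \ell F_\ell d$, but one must be careful: what is needed is how $F_\ell$ interacts with $d$ applied to degree-zero elements, i.e. $F_\ell(db) = $ the divided-Frobenius formula), and with $F_\ell(d\log[-1]) = [-1]^{\ell-1}d[-1] = d\log[-1]$ for $\ell$ odd and $F_2(d\log[-1]) = [-1]d[-1] = d\log[-1]$ as well (using Theorem~\ref{dividedfrobenius}(3)), one rewrites $F_\ell$ of the generator as a $\mathbb{W}(A)$-linear combination of generators of the same type with $a$ replaced by $F_\ell(a)$. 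When $\ell = p$, one uses instead $F_pV_p = p\cdot\id$ from Lemma~\ref{variousrelations}(ii): then $F_p F_p dV_p(a) = F_p d(F_p V_p(a))/\!\ldots$ — more carefully, $F_p(F_pdV_p(a)) = F_p^2 dV_p(a)$, and one needs the relation $F_p(dV_p(a))$ which follows from the defining relation of $K^{\boldsymbol{\cdot}}$ itself, modulo $K$. This self-referential structure is exactly why the relation was built the way it was.

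\textbf{The main obstacle.} The delicate point is the bookkeeping of the $d\log[-1]$-correction terms, especially at $p = 2$ where $d\log[-1]$ is $2$-torsion and the factor $(p-1) = 1$; one must verify that the anti-symmetry of $\hat{\Omega}_{\mathbb{W}(A)}^{\boldsymbol{\cdot}}$ (Lemma~\ref{antisymmetric}) and the identity $F_2(d\log[-1]) = d\log[-1]$ conspire to make the correction terms land in $K^{\boldsymbol{\cdot}}$ rather than merely in $\hat{\Omega}_{\mathbb{W}(A)}^{\boldsymbol{\cdot}}$. I expect the $\ell = p$ case to be the genuinely substantive one, since it forces one to use the defining relations of $K$ to evaluate $F_pd(V_p a)$, and one must check that the resulting expression, after applying the outer $F_p$ and collecting the $d\log[-1]$ terms, is again a $\mathbb{W}(A)$-linear combination of the defining generators of $K^{\boldsymbol{\cdot}}$ — in particular that no genuinely new relations are needed. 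A clean way to organize this is to first establish, working modulo $K^{\boldsymbol{\cdot}}$, a simplified formula for $F_p(dV_p(a))$ and $F_p(da)$ in $\check{\Omega}_{\mathbb{W}(A)}^1$, and then verify the generator is killed by direct substitution; since all relevant operators are already known to be well-defined on $\hat{\Omega}_{\mathbb{W}(A)}^{\boldsymbol{\cdot}}$, no further well-definedness check is required.
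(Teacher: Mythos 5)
Your overall plan matches the paper's proof: reduce via $F_mF_n = F_{mn}$ to the case $n=\ell$ prime, then check directly that the image of $F_\ell\bigl(F_pdV_p(a) - da - (p-1)d\log[-1]\cdot a\bigr)$ in $\check{\Omega}^1_{\mathbb{W}(A)}$ is zero, splitting into cases according to whether $\ell = p$ or $\ell \neq p$ (and, within each, whether the relevant primes are $2$ or odd). However, two of your expectations are off the mark. First, the case $\ell \neq p$ is not formal: it requires combining $F_\ell V_p = V_pF_\ell$, the projection-formula identity $V_p(a)^{\ell} = p^{\ell-1}V_p(a^\ell)$, and the expansion $F_\ell(db) = b^{\ell-1}db + d\bigl((F_\ell(b)-b^\ell)/\ell\bigr)$; after reducing modulo $K^{\boldsymbol{\cdot}}$ (so that $F_pdV_p$ becomes $d$ plus the $d\log[-1]$ correction) one is left in the odd-odd case with $(p^{\ell-1}-1)a^{\ell-1}da - \tfrac{p^{\ell-1}-1}{\ell}\,d(a^\ell)$, which vanishes by the Leibniz rule and requires Fermat's congruence $\ell \mid p^{\ell-1}-1$ just to make sense. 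Second, the mechanism is not the one you describe — the paper never rewrites $F_\ell$ of a generator as ``a linear combination of generators with $a$ replaced by $F_\ell(a)$.'' Rather, after simplifying modulo $K^{\boldsymbol{\cdot}}$ the residual expression vanishes outright because $d$ is a derivation on $\check{\Omega}^{\boldsymbol{\cdot}}_{\mathbb{W}(A)}$; the $p=2$ and $\ell=2$ subcases additionally need the parity bookkeeping of $d\log[-1]$ that you correctly anticipate. So the strategy is right, but the $\ell\neq p$ computation is at least as substantive as the $\ell = p$ one, and both must actually be carried out.
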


\begin{proof}It will suffice to let $n = \ell$ be a prime
number and show that for all prime numbers $p$ and all $a \in
\mathbb{W}(A)$, the element 
$$F_{\ell}(F_pdV_p(a) - da - (p-1)d\log[-1] \cdot a) \in
\Omega_{\mathbb{W}(A)}^1$$
maps to zero in $\smash{ \check{\Omega}_{\mathbb{W}(A)}^1 }$. To this
end, we will repeatedly use that for every prime number $p$ and every
$b \in \mathbb{W}(A)$, Theorem~\ref{dividedfrobenius} and
Remark~\ref{deltaremark} shows that
$$F_pdb = b^{p-1}db + d\Delta_p(b) = b^{p-1}db +
d(\frac{F_p(b)-b^p}{p})$$
as elements of $\Omega_{\mathbb{W}(A)}^1$. We also use that, by
Lemma~\ref{variousrelations}~(ii)--(iii), we have
$$V_m(a)^n = m^{n-1}V_m(a^n),$$
for all $m,n \in \N$ and $a \in \mathbb{W}(A)$. Now, suppose first
that $\ell = p$. For $p$ odd, we find that 
$$\begin{aligned}
{} & F_p(F_pdV_p(a) - da)
{} = \smash{ F_p(V_p(a)^{p-1}dV_p(a) + d( \frac{F_pV_p(a) - V_p(a)^p}{p}) -
da) } \cr
& {} = F_p(p^{p-2}V_p(a^{p-1})dV_p(a) - p^{p-2}dV_p(a^p)) \cr
{} & = p^{p-1}a^{p-1}F_pdV_p(a) - p^{p-2}F_pdV_p(a^p), \cr
\end{aligned}$$
where we also use that $F_pV_p = p \cdot \id$. But this
element maps to zero $\smash{ \check{\Omega}_{\mathbb{W}(A)}^1 }$,
since, as maps from $\mathbb{W}(A)$ to $\smash{
\check{\Omega}_{\mathbb{W}(A)}^1 }$, we have $F_pdV_p = d$, and since
the common map is a derivation. Similarly, for $p = 2$, we find that
$$\begin{aligned}
{} &F_2(F_2dV_2(a) - da - d\log[-1] \cdot a)
{} = F_2(V_2(a)dV_2(a) -dV_2(a^2) -d\log[-1] \cdot a) \cr
{} & = 2aF_2dV_2(a) - F_2dV_2(a^2) - d\log[-1] \cdot F_2(a), \cr
\end{aligned}$$
where we further use that $F_m(d\log[-1]) = d\log[-1]$, for every $m
\in \N$. The image of this element in
$\check{\Omega}_{\mathbb{W}(A)}^1$ is equal to
$$2ada - d(a^2) - d\log[-1] \cdot (F_2(a) - a^2),$$
which, in turn, is zero, since $d$ is a derivation and since $F_2(a) -
a^2$ is divisible by $2$ by Lemma~\ref{pfrobenius}. We next suppose
that $p \neq \ell$. In this case, we further use that $\ell$ divides
$p^{\ell-1}-1$ and that, by Lemma~\ref{variousrelations}~(ii),
$F_{\ell}\,V_p = V_pF_{\ell}$. If $p$ and $\ell$ both are odd, then
$$\begin{aligned}
{} &  F_{\ell}(F_pdV_p(a) - da)
= F_p(F_{\ell}dV_p(a)) - F_{\ell}da \cr
{} & = F_p(V_p(a)^{\ell-1}dV_p(a) + d(\frac{F_{\ell}V_p(a) -
  V_p(a)^{\ell}}{\ell}))
- a^{\ell-1}da - d(\frac{F_{\ell}(a)-a^{\ell}}{\ell}) \cr
{} & = p^{\ell-1}a^{\ell-1}F_pdV_p(a) + F_pdV_p(\frac{F_{\ell}(a) -
  p^{\ell-1}a^{\ell}}{\ell})
- a^{\ell-1}da - d(\frac{F_{\ell}(a)-a^{\ell}}{\ell}), \cr
\end{aligned}$$
and the image of this element in 
$\check{\Omega}_{\mathbb{W}(A)}^1$ is equal to
$$\smash{ (p^{\ell-1}-1)a^{\ell-1}da -
\frac{p^{\ell-1}-1}{\ell}d(a^{\ell}), }$$
which is zero since $d$ is a derivation. Similarly, if $p = 2$ and
$\ell \neq p$, then
$$\begin{aligned}
{} & F_{\ell}(F_2dV_2(a) - da - d\log[-1] \cdot a) =
F_2(F_{\ell}dV_2(a)) - F_{\ell}da - d\log[-1] \cdot F_{\ell}(a) \cr
{} & = F_2(V_2(a)^{\ell-1}dV_2(a) + d(\frac{F_{\ell}V_2(a) -
  V_2(a)^{\ell}}{\ell})) \cr
{} & \hskip7mm - a^{\ell-1}da - d(\frac{F_{\ell}(a)-a^{\ell}}{\ell}) -
d\log[-1] \cdot F_{\ell}(a) \cr
{} & = 2^{\ell-1}a^{\ell-1}F_2dV_2(a) + F_2dV_2(\frac{F_{\ell}(a) -
  2^{\ell-1}a^{\ell}}{\ell}) \cr
{} & \hskip7mm
- a^{\ell-1}da - d(\frac{F_{\ell}(a) - a^{\ell}}{\ell}) - d\log[-1]
\cdot F_{\ell}(a), \cr
\end{aligned}$$
and the image of this element in $\check{\Omega}_{\mathbb{W}(A)}^1$ is
equal to
$$(2^{\ell-1}-1)a^{\ell-1}da - \frac{2^{\ell-1}-1}{\ell}d(a^{\ell})
+ d\log[-1] \cdot ( \frac{F_{\ell}(a) - 2^{\ell-1}a^{\ell}}{\ell} -
F_{\ell}(a)),$$
which is zero since $d$ is a derivation and since $\ell$ is congruent
to $1$ modulo $2$. Finally, if $\ell = 2$ and $p \neq \ell$, then we
find that
$$\begin{aligned}
{} & F_2(F_pdV_p(a) - da) = F_p(F_2dV_p(a)) - F_2da \cr
{} & = F_p(V_p(a)dV_p(a) + d(\frac{F_2V_p(a) - V_p(a)^2}{2}))
- ada - d(\frac{F_2(a) - a^2}{2}) \cr
{} & = paF_pdV_p(a) + F_pdV_p(\frac{F_2(a) - pa^2}{2})
- ada - d(\frac{F_2(a)-a^2}{2}) \cr
\end{aligned}$$
whose image in $\check{\Omega}_{\mathbb{W}(A)}^1$ is equal to
$$\smash{ (p-1)ada - \frac{p-1}{2}d(a^2), }$$
which again is zero since $d$ is a derivation. 
\end{proof}

\begin{lemma}\label{F_ndV_n(a)}For all positive integers $n$ and $a \in
\mathbb{W}(A)$, the relation
$$F_ndV_n(a) = da + (n-1)d\log[-1] \cdot a$$
holds in $\check{\Omega}_{\mathbb{W}(A)}^1$.
\end{lemma}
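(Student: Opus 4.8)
The plan is to argue by strong induction on $n$, bootstrapping from the prime case, which is built into the definition of $\check{\Omega}_{\mathbb{W}(A)}^{\boldsymbol{\cdot}}$. For $n = 1$ the relation is trivial, as $F_1 = \id$ and $V_1 = \id$; and for $n = p$ a prime it is exactly the relation by which $K^{\boldsymbol{\cdot}}$ is generated in Definition~\ref{omegacheck}. So suppose $n > 1$ and write $n = pm$ with $p$ a prime and $1 \leqslant m < n$. I shall use that $V_n = V_pV_m$ on $\mathbb{W}(A)$, verified on ghost components, and that $F_n = F_mF_p$ on $\check{\Omega}_{\mathbb{W}(A)}^{\boldsymbol{\cdot}}$; the latter because a graded ring endomorphism of $\check{\Omega}_{\mathbb{W}(A)}^{\boldsymbol{\cdot}}$ is determined by its components in degrees $0$ and $1$, where $F_mF_p = F_{pm}$ holds on $\mathbb{W}(A)$ by a ghost-component computation and on $\Omega_{\mathbb{W}(A)}^1$ by Theorem~\ref{dividedfrobenius}(1).

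Applying the defining relation of Definition~\ref{omegacheck} with the prime $p$ to the element $V_m(a) \in \mathbb{W}(A)$ gives, in $\check{\Omega}_{\mathbb{W}(A)}^1$,
$$F_pdV_pV_m(a) = dV_m(a) + (p-1)d\log[-1] \cdot V_m(a).$$
By Lemma~\ref{frobeniusdescends} the Frobenius $F_m$ descends to a graded ring endomorphism of $\check{\Omega}_{\mathbb{W}(A)}^{\boldsymbol{\cdot}}$, so applying $F_m$ to both sides, and using $F_mF_p = F_n$ and $V_pV_m = V_n$, I get
$$F_ndV_n(a) = F_mdV_m(a) + (p-1)F_m(d\log[-1]) \cdot F_mV_m(a).$$
Here $F_mV_m(a) = ma$ by Lemma~\ref{variousrelations}(ii), and $F_m(d\log[-1]) = d\log[-1]$ by a short computation using $F_m[-1] = [-1]^m$ (the Teichm\"{u}ller map being multiplicative), the identity $F_md[-1] = [-1]^{m-1}d[-1]$ of Theorem~\ref{dividedfrobenius}(3), and $[-1]^2 = [1]$. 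Substituting the induction hypothesis $F_mdV_m(a) = da + (m-1)d\log[-1] \cdot a$ then yields
$$F_ndV_n(a) = da + \big((m-1) + m(p-1)\big)d\log[-1] \cdot a = da + (n-1)d\log[-1] \cdot a,$$
which completes the induction.

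I do not anticipate a real obstacle here: the whole argument is a short induction once the prime case is supplied by the definition. The only points calling for a word of justification are the factorizations $V_pV_m = V_{pm}$ on $\mathbb{W}(A)$ and $F_mF_p = F_{pm}$ on $\check{\Omega}_{\mathbb{W}(A)}^{\boldsymbol{\cdot}}$, and the small Teichm\"{u}ller computation showing that $F_m$ fixes $d\log[-1]$; the rest is the arithmetic identity $(m-1) + m(p-1) = pm - 1$.
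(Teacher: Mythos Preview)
Your proof is correct and follows essentially the same induction as the paper's: both argue on the factorization $n = pm$ and combine the prime case with the inductive hypothesis for $m$. The only cosmetic difference is the order of operations---the paper writes $F_n = F_pF_m$, $V_n = V_mV_p$ and applies the inductive hypothesis to $V_p(a)$ before the prime relation, whereas you write $F_n = F_mF_p$, $V_n = V_pV_m$ and apply the prime relation to $V_m(a)$ first; the arithmetic $(m-1)+m(p-1) = pm-1$ closes the loop in either order.
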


\begin{proof}We argue by induction on the number $r$ of prime factors
in $n$ that the stated relation holds for all $a \in
\mathbb{W}(A)$. The case $r = 1$ follows from
Definition~\ref{omegacheck}. So we let $n$ be a positive integer with
$r > 1$ prime factors and assume that the relation has been proved for
all positive integers with less than $r$ prime factors. We write $n
= pm$ with $p$ a prime number and use the inductive hypothesis to
conclude that
$$\begin{aligned}
F_ndV_n(a)
{} & = F_pF_mdV_mV_p(a)
{} = F_p(dV_p(a) + (m-1)d\log[-1] \cdot V_p(a)) \cr
{} & = F_pdV_p(a) + (m-1)d\log[-1] \cdot F_pV_p(a) \cr
{} & = da + (p-1)d\log[-1] \cdot a + p(m-1)d\log[-1] \cdot a \cr
{} & = da + (n-1)d\log[-1] \cdot a \cr
\end{aligned}$$
which proves the induction step. 
\end{proof}

\begin{lemma}\label{derivationdescends}The graded derivation $d \colon
\hat{\Omega}_{\mathbb{W}(A)}^{\cdot} \to
\hat{\Omega}_{\mathbb{W}(A)}^{\cdot}$ induces a graded derivation
$$d \colon
\check{\Omega}_{\mathbb{W}(A)}^{\cdot} \to
\check{\Omega}_{\mathbb{W}(A)}^{\cdot}.$$
\end{lemma}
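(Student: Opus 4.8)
The plan is to show that the graded ideal $K^{\boldsymbol{\cdot}}$ is a differential ideal, i.e.\ that $d(K^{\boldsymbol{\cdot}}) \subset K^{\boldsymbol{\cdot}}$, so that $d$ on $\hat{\Omega}_{\mathbb{W}(A)}^{\boldsymbol{\cdot}}$ descends to the quotient $\check{\Omega}_{\mathbb{W}(A)}^{\boldsymbol{\cdot}}$, where it is automatically again a graded derivation satisfying $dd\omega = d\log[-1]\cdot d\omega$ by Proposition~\ref{uniquederivation}. Since $K^{\boldsymbol{\cdot}}$ is generated as a graded ideal by the degree-$1$ elements $g_{p,a} = F_pdV_p(a) - da - (p-1)d\log[-1]\cdot a$, and since $d$ is a graded derivation, for any $\omega \in \hat{\Omega}_{\mathbb{W}(A)}^{\boldsymbol{\cdot}}$ we have $d(\omega \cdot g_{p,a}) = d\omega \cdot g_{p,a} \pm \omega \cdot dg_{p,a}$; the first term lies in $K^{\boldsymbol{\cdot}}$ trivially, so the whole problem reduces to checking that $dg_{p,a} \in K^{\boldsymbol{\cdot}}$ for every prime $p$ and every $a \in \mathbb{W}(A)$.

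So the first step is to compute $dg_{p,a}$ in $\hat{\Omega}_{\mathbb{W}(A)}^2$ using the formulas already available: the relation $dd\omega = d\log[-1]\cdot d\omega$ from Proposition~\ref{uniquederivation}, the compatibility $dF_p = pF_pd$ from Proposition~\ref{omegafrobenius}, and the fact that $d\log[-1]$ is a cycle. Applying $d$ to the three summands of $g_{p,a}$ gives
$$d(F_pdV_p(a)) = pF_pd(dV_p(a)) = pF_p(d\log[-1]\cdot dV_p(a)) = p\,d\log[-1]\cdot F_pdV_p(a),$$
$$d(da) = d\log[-1]\cdot da,\qquad d((p-1)d\log[-1]\cdot a) = (p-1)d\log[-1]\cdot da,$$
so that $dg_{p,a} = d\log[-1]\cdot\big(p\,F_pdV_p(a) - da - (p-1)da\big) = p\,d\log[-1]\cdot\big(F_pdV_p(a) - da\big)$. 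Now add and subtract $(p-1)d\log[-1]\cdot a$ inside the bracket: $F_pdV_p(a) - da = g_{p,a} + (p-1)d\log[-1]\cdot a$, whence $dg_{p,a} = p\,d\log[-1]\cdot g_{p,a} + p(p-1)\,d\log[-1]\cdot d\log[-1]\cdot a$. The first term visibly lies in $K^{\boldsymbol{\cdot}}$. For the second, one uses that $d\log[-1]$ is $2$-torsion (it equals $[-1]d[-1]$ with $[-1]^2=[1]$, so $2\,d\log[-1]=d[1]=0$, already noted in the introduction, or alternatively $d\log[-1]\cdot d\log[-1] = d\log[-1]\cdot F_2d[-1]$ which is $2$-torsion by anti-symmetry): since $p(p-1)$ is even, $p(p-1)\,d\log[-1]\cdot d\log[-1]\cdot a = 0$. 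Hence $dg_{p,a} \in K^{\boldsymbol{\cdot}}$.

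I expect the main obstacle to be purely bookkeeping: making sure the sign conventions in the graded Leibniz rule are applied consistently when $\omega$ has odd degree, and verifying carefully that $d\log[-1]\cdot d\log[-1]$ is indeed annihilated by $2$ in $\hat{\Omega}_{\mathbb{W}(A)}^2$ (this is where Lemma~\ref{antisymmetric} and the defining relation $da\cdot da = d\log[-1]\cdot F_2da$ are needed, applied with $a = [-1]$). Once $d(K^{\boldsymbol{\cdot}}) \subset K^{\boldsymbol{\cdot}}$ is established, the induced map $d \colon \check{\Omega}_{\mathbb{W}(A)}^{\boldsymbol{\cdot}} \to \check{\Omega}_{\mathbb{W}(A)}^{\boldsymbol{\cdot}}$ is well defined, and it inherits the graded Leibniz rule and the identity $dd\omega = d\log[-1]\cdot d\omega$ directly from $\hat{\Omega}_{\mathbb{W}(A)}^{\boldsymbol{\cdot}}$, so it is again a graded derivation, completing the proof.
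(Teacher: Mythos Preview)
Your argument is correct, and in fact cleaner than the paper's. The paper splits into two cases: for $p=2$ it shows $dg_{2,a}=0$ already in $\hat{\Omega}^2_{\mathbb{W}(A)}$, while for $p$ odd it unpacks $F_pdV_p(a)=V_p(a)^{p-1}dV_p(a)+d\Delta_p(V_p(a))$ explicitly and computes term by term to arrive at $dg_{p,a}=d\log[-1]\cdot g_{p,a}$. You instead use the identity $dF_p=pF_pd$ from Proposition~\ref{omegafrobenius} uniformly for all primes, obtaining $dg_{p,a}=p\,d\log[-1]\cdot g_{p,a}$ directly; for $p=2$ this is zero, and for $p$ odd it equals $d\log[-1]\cdot g_{p,a}$ since $(p-1)d\log[-1]=0$, matching the paper's conclusions in both cases.

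One small point: your Leibniz computation $d((p-1)d\log[-1]\cdot a)=(p-1)d\log[-1]\cdot da$ has the wrong sign (since $d\log[-1]$ sits in degree $1$, the correct result is $-(p-1)d\log[-1]\cdot da$), but as you yourself anticipated, this is harmless because $2d\log[-1]=0$ forces $-(p-1)d\log[-1]=(p-1)d\log[-1]$. Similarly, your worry about showing $d\log[-1]\cdot d\log[-1]$ is $2$-torsion is unnecessary: the factor $p(p-1)$ already kills it via $2d\log[-1]=0$, with no appeal to Lemma~\ref{antisymmetric} or the defining relation needed.
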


\begin{proof}We must show that for all prime numbers $p$ and $a \in
\mathbb{W}(A)$, the element
$$d(F_pdV_p(a) - da - (p-1)d\log[-1] \cdot a) \in
\hat{\Omega}_{\mathbb{W}(A)}^2$$
maps to zero in $\check{\Omega}_{\mathbb{W}(A)}^2$. First, for $p =
2$, we have
$$\begin{aligned}
{} & d(F_2dV_2(a) - da - d\log[-1] \cdot a)
{} = dF_2dV_2(a) - dda + d\log[-1] \cdot da \cr
{} & = 2F_2ddV_2(a) = 2d\log[-1] \cdot F_2dV_2(a) \cr
\end{aligned}$$
which is even zero in $\hat{\Omega}_{\mathbb{W}(A)}^2$. For $p$ odd,
we recall from Theorem~\ref{dividedfrobenius} that
$$F_pdV_p(a) - da = V_p(a)^{p-1}dV_p(a) + d\Delta_pV_p(a) - da,$$
and using that $d$ is a derivation, we find that
$$\begin{aligned}
{} & d(F_pdV_p(a) - da) = d(V_p(a)^{p-1}dV_p(a) + d\Delta_pV_p(a)) -
dda \cr
{} & = (p-1)V_p(a)^{p-2}dV_p(a)dV_p(a) + V_p(a)^{p-1}ddV_p(a) +
dd\Delta_pV_p(a) - dda. \cr
\end{aligned}$$
Now, the first summand in the bottom line vanishes, since $p-1$ is
even and
$$dV_p(a)dV_p(a) = d\log[-1] \cdot F_2dV_p(a),$$
and by Proposition~\ref{uniquederivation}, the sum of the remaining
three summands is equal to
$$d\log[-1] \cdot (V_p(a)^{p-1}dV_p(a) + d\Delta_pV_p(a) - da) =
d\log[-1] \cdot (F_pdV_p(a) - da),$$
which maps to zero in $\check{\Omega}_{\mathbb{W}(A)}^2$,
since $p-1$ is even.
\end{proof}

\begin{definition}\label{truncatedomegahatomegacheck}Let $A$ be a
ring, let $S \subset \N$ be a truncation set, and let 
$I_S(A) \subset \mathbb{W}(A)$ be the kernel of the restriction map
$R_S^{\N} \colon \mathbb{W}(A) \to \mathbb{W}_S(A)$. The maps
$$\xymatrix{
{ \hat{\Omega}_{\mathbb{W}(A)}^{\cdot} } \ar[r]^-{R_S^{\N}} &
{ \hat{\Omega}_{\mathbb{W}_S(A)}^{\cdot}, } &
{ \check{\Omega}_{\mathbb{W}(A)}^{\cdot} } \ar[r]^-{R_S^{\N}} &
{ \check{\Omega}_{\mathbb{W}_S(A)}^{\cdot} } \cr
}$$
are defined to be the quotient maps that annihilate the respective
graded ideals generated by $I_S(A)$ and $dI_S(A)$. 
\end{definition}

\begin{remark}\label{K_S}The kernel $K_S^{\cdot}$ of the
canonical projection $\smash{
\hat{\Omega}_{\mathbb{W}_S(A)}^{\cdot} \to
\check{\Omega}_{\mathbb{W}_S(A)}^{\cdot} }$ is equal to
the graded ideal generated by the elements
$$p^{p-2}(V_p(R_{S/p}^S(a^{p-1}))dV_pR_{S/p}^S(a) - dV_pR_{S/p}^S(a^p))
- (p-1)d\log[-1]_S \cdot a$$
with $p$ a prime number and $a \in \mathbb{W}_S(A)$. Indeed, letting
$b = V_p(a)$, the formula for $F_pdb$ in the beginning of the proof of
Lemma~\ref{frobeniusdescends} shows that for all prime 
numbers $p$ and $a \in \mathbb{W}(A)$, the following identity holds in
$\Omega_{\mathbb{W}(A)}^1$,
$$F_pdV_p(a) - da = p^{p-2}(V_p(a^{p-1})dV_p(a) - dV_p(a^p)).$$
\end{remark}

\begin{remark}If $p$ is a prime number and $A$ a $\Z_{(p)}$-algebra,
then for every truncation set $S$, the ideal
$V_p\mathbb{W}_{S/p}(A) \subset \mathbb{W}_S(A)$ has a divided power
structure defined by
$$V_p(a)^{[n]} = \frac{p^{n-1}}{n!} V_p(a^n).$$
If $p$ is \emph{odd}, then $\smash{ d \colon
\mathbb{W}_S(A) \to \check{\Omega}_{\mathbb{W}_S(A)}^1 }$ is a divided
power derivation in the sense that
$$d(V_p(a)^{[n]}) = V_p(a)^{[n-1]}dV_p(a)$$
and it is universal with this property; see~\cite[Lemma~1.2]{langerzink}. 
\end{remark}

\begin{lemma}\label{variousmapsdescend}The derivation, restriction,
and Frobenius induce maps 
$$\begin{aligned}
d & \colon \hat{\Omega}_{\mathbb{W}_S(A)}^{\cdot}
\to \hat{\Omega}_{\mathbb{W}_S(A)}^{\cdot} 
\hskip10.65mm
 (\text{resp.\ } 
d \colon \check{\Omega}_{\mathbb{W}_S(A)}^{\cdot}
\to \check{\Omega}_{\mathbb{W}_S(A)}^{\cdot}) \cr
R_T^S & \colon \hat{\Omega}_{\mathbb{W}_S(A)}^{\cdot}
\to \hat{\Omega}_{\mathbb{W}_T(A)}^{\cdot}
\hskip10mm
 (\text{resp.\ } 
R_T^S \colon \check{\Omega}_{\mathbb{W}_S(A)}^{\cdot}
\to \check{\Omega}_{\mathbb{W}_T(A)}^{\cdot}) \cr
F_n & \colon \hat{\Omega}_{\mathbb{W}_S(A)}^{\cdot}
\to \hat{\Omega}_{\mathbb{W}_{S/n}(A)}^{\cdot} 
\hskip8.4mm
 (\text{resp.\ } 
F_n \colon \check{\Omega}_{\mathbb{W}_S(A)}^{\cdot}
\to \check{\Omega}_{\mathbb{W}_{S/n}(A)}^{\cdot}) \cr
\end{aligned}$$
Moreover, the maps $d$ are graded derivations; the maps $R_T^S$ and
$F_n$ are graded ring homomorphisms; the maps $R_T^S$ and $d$ commute;
and $dF_n = nF_nd$.
\end{lemma}

\begin{proof}To prove the statement for $d$, we note that as $d$ is a derivation, it suffices to show that
$\smash{ R_S^{\N}(ddI_S(A)) \subset \hat{\Omega}_{\mathbb{W}_S(A)}^2 }$
is zero. But if $x \in I_S(A)$, then
$$R_S^{\N}(ddx) = R_S^{\N}(d\log[-1] \cdot dx) = R_S^{\N}(d\log[-1])
\cdot R_S^{\N}(dx)$$
which is zero as desired. It follows that $R_T^{\N}(dI_S(A)) =
dR_T^{\N}(I_S(A)$. Hence, also the statement for $R_T^S$ follows as
$R_T^{\N}(I_S(A)$ is trivially zero. Finally, to prove the statement
for $F_n$, we show that both $R_{S/n}^{\N}(F_n(I_S(A)))$ and
$R_{S/n}^{\N}(F_n(dI_S(A)))$ are zero. For the former, this follows
immediately from Lemma~\ref{frobenius}, and for the latter, it will
suffice to show that for divisors $e$ of $n$, $\Delta_e(I_S(A))
\subset I_{S/e}(A)$. Moreover, to prove this, we may assume that $A$
is torsion free. So let $e$ be a divisor of $n$ and assume that
for all proper divisors $d$ of $e$, $\Delta_d(I_S(A)) \subset
I_{S/d}(A)$. Since $F_e(I_S(A)) \subset I_{S/e}(A)$, the formula
$$F_e(a) = \sum_{d \mid e} d\Delta_d(a)^{e/d}$$
shows that $e\Delta_e(I_S(A)) \subset I_{S/e}(A)$. This completes the
proof of the first part of the statement and the second part is clear.
\end{proof}

Finally, we record the following result concerning the case $S =
\{1\}$. 

\begin{lemma}\label{S=1}For every ring $A$, the differential graded
algebras $\smash{ \hat{\Omega}_A^{\cdot} }$ and
$\smash{ \Omega_A^{\cdot} }$ are equal and the canonical
projection $\smash{ \hat{\Omega}_A^{\cdot} \to
\check{\Omega}_A^{\cdot} }$ is an isomorphism. 
\end{lemma}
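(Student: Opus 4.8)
The plan is to specialise to the truncation set $S = \{1\}$, for which $\mathbb{W}_{\{1\}}(A) = A$: the ghost map is simply $w_1 = \id_A$, and under this identification $[a]_{\{1\}} = a$, so $[-1]_{\{1\}} = -1 \in A$. The crucial point is the vanishing
$$d\log[-1]_{\{1\}} = [-1]_{\{1\}}\,d[-1]_{\{1\}} = (-1)\,d(-1) = 0$$
in $\Omega_A^1$, which holds because $d(-1) = -d(1) = 0$.

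Granting this, the defining relation of $\hat{\Omega}_A^{\boldsymbol{\cdot}}$ degenerates: the graded ideal $J^{\boldsymbol{\cdot}} \subset T_A^{\boldsymbol{\cdot}}\Omega_A^1$ is generated by the elements $da\otimes da$, $a \in A$, and I would identify $\hat{\Omega}_A^{\boldsymbol{\cdot}}$ with the alternating algebra $\Omega_A^{\boldsymbol{\cdot}} = \bigwedge_A\Omega_A^1$. One inclusion of ideals is clear since the $da$ generate $\Omega_A^1$ over $A$; for the reverse, the computation used in the proof of Lemma~\ref{antisymmetric} shows $da\otimes db + db\otimes da \in J^{\boldsymbol{\cdot}}$ for all $a,b$, and then expanding $\omega\otimes\omega = \sum_{i,j} a_ia_j\,(db_i\otimes db_j)$ for $\omega = \sum_i a_i\,db_i$ shows $\omega\otimes\omega \in J^{\boldsymbol{\cdot}}$, the diagonal terms being multiples of the generators and the off-diagonal ones cancelling in pairs. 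Thus $\hat{\Omega}_A^{\boldsymbol{\cdot}} = \Omega_A^{\boldsymbol{\cdot}}$ as graded algebras. For the differential, Proposition~\ref{uniquederivation} together with Lemma~\ref{variousmapsdescends} provides a graded derivation $d$ on $\hat{\Omega}_A^{\boldsymbol{\cdot}}$ extending $d \colon A \to \Omega_A^1$ with $dd\omega = d\log[-1]_{\{1\}}\cdot d\omega = 0$; the unique square-zero graded derivation of $\bigwedge_A\Omega_A^1$ restricting to $d$ on $A$ is the de~Rham differential, so the two differential graded algebras coincide.

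Finally, to see that the projection $\hat{\Omega}_A^{\boldsymbol{\cdot}} \to \check{\Omega}_A^{\boldsymbol{\cdot}}$ is an isomorphism, I would invoke Remark~\ref{K_S}: its kernel $K_{\{1\}}^{\boldsymbol{\cdot}}$ is the graded ideal generated by the elements
$$V_p\big(R_{\{1\}/p}^{\{1\}}(a)^{p-1}\big)\,dV_pR_{\{1\}/p}^{\{1\}}(a) - p^{p-2}\,dV_pR_{\{1\}/p}^{\{1\}}(a^p) - (p-1)\,d\log[-1]_{\{1\}}\cdot a$$
with $p$ prime and $a \in A$. Since $\{1\}/p = \emptyset$, the ring $\mathbb{W}_{\{1\}/p}(A)$ is the zero ring and $R_{\{1\}/p}^{\{1\}}$ is the zero map, so, using again $d\log[-1]_{\{1\}} = 0$, every generator vanishes and $K_{\{1\}}^{\boldsymbol{\cdot}} = 0$. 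The only step that is not pure bookkeeping is the identification of the ideal generated by $\{\,da\otimes da \mid a \in A\,\}$ with the full exterior-algebra relation ideal, and even that reduces to the short argument already carried out in Lemma~\ref{antisymmetric}.
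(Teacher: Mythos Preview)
Your proof is correct and follows the same approach as the paper: both rest on the vanishing of $d\log[-1]_{\{1\}}$ to identify $\hat{\Omega}_A^{\boldsymbol{\cdot}}$ with $\Omega_A^{\boldsymbol{\cdot}}$, and both invoke Remark~\ref{K_S} together with $\{1\}/p = \emptyset$ to see that $K_{\{1\}}^{\boldsymbol{\cdot}} = 0$. The paper's proof is two sentences, and you have simply (and correctly) unpacked the one nontrivial point it leaves implicit, namely that the ideal generated by the $da\otimes da$ coincides with the full exterior-algebra ideal.
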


\begin{proof}Since $d\log[-1]_{\{1\}}$ is zero, $\smash{
\hat{\Omega}_A^{\cdot} = \Omega_A^{\cdot} }$
as stated. Moreover, Remark~\ref{K_S} shows that the kernel
$K_{\{1\}}$ of the canonical projection $\smash{
\hat{\Omega}_A^{\cdot} \to
\check{\Omega}_A^{\cdot} }$ is zero. 
\end{proof}

\section{The big de~Rham-Witt complex}\label{drwsection}

In this section, we construct the big de~Rham-Witt complex. We let $J$
be the category with objects the truncation sets $S \subset \N$ and
with a single morphism from $T$ to $S$ if $T \subset S$. If $A$ is a
ring, then there is a contravariant functor from $J$ to the category of
rings that to $S$ assigns $\mathbb{W}_S(A)$ and that to $T \subset S$
assigns $R_T^S \colon \mathbb{W}_S(A) \to \mathbb{W}_T(A)$; it takes
colimits in $J$ to limits in the category of rings. For every $n \in \N$,
there is an endofunctor on $J$ that takes $S$ to $S/n$, and
the ring homomorphism $F_n \colon \mathbb{W}_S(A) \to
\mathbb{W}_{S/n}(A)$ and the abelian group homomorphism $V_n \colon
\mathbb{W}_{S/n}(A) \to \mathbb{W}_S(A)$ are natural transformations
with respect to $S$.

We proceed to define the notion of a Witt complex over $A$. The original
definition given in~\cite[Definition~1.1.1]{hm2} is not quite correct
unless the prime $2$ is either invertible or zero in $A$. The correct
definition of a $2$-typical Witt complex was given first by
Costeanu~\cite[Definition~1.1]{costeanu}. The definition given below 
was also inspired by~\cite{stienstra}.

\begin{definition}\label{bigwittcomplex}A \emph{Witt complex} over $A$
is a contravariant functor that to every truncation set $S \subset \N$
assigns an anticommutative graded ring $E_S^{\cdot}$ and that takes
colimits to limits together with a natural ring homomorphism
$$\xymatrix{ 
{ \mathbb{W}_S(A) } \ar[r]^-{\eta_S} &
{ E_S^0 } \cr
}$$
and natural maps of graded abelian groups
$$\xymatrix{
{ E_S^q } \ar[r]^-{d} &
{ E_S^{q+1} } &
{ E_S^q } \ar[r]^-{F_n} &
{ E_{S/n}^q } &
{ E_{S/n}^q } \ar[r]^-{V_n} &
{ E_S^q } &
{ (n \in \N) } \cr
}$$
such that the following (i)--(v) hold.
\begin{enumerate}
\item[(i)] For all $x \in E_S^q$ and $x' \in E_S^{q'}$,
$$\begin{aligned}
{} & d(x \cdot x') = d(x) \cdot x' + (-1)^q x \cdot d(x'), \cr
{} & d(d(x)) = d\log\eta_S([-1]_S) \cdot d(x), \cr
\end{aligned}$$
where $d\log\eta_S([-1]_S) = \eta_S([-1])^{-1}d\eta_S([-1]_S)$.
\item[(ii)] For all positive integers $m$ and $n$,
$$\begin{aligned}
{} & F_1 = V_1 = \id, \hskip4mm F_mF_n = F_{mn}, \hskip4mm V_nV_m =
V_{mn}, \cr
{} & F_nV_n = n \cdot \id, \hskip4mm F_mV_n = V_nF_m \hskip3mm
\text{if $(m,n) = 1$,} \cr
{} & F_n \eta_S = \eta_{S/n} F_n, \hskip6mm \eta_S V_n = V_n \eta_{S/n}.
\cr
\end{aligned}$$
\item[(iii)] For all positive integers $n$, the map $F_n$ is a ring
homomorphism and the maps $F_n$ and $V_n$ satisfy the projection
formula that for all $x \in E_S^q$ and $y \in E_{S/n}^{q''}$,
$$x \cdot V_n(y) = V_n(F_n(x)y).$$
\item[(iv)] For all positive integers $n$ and all $y \in E_{S/n}^q$,
$$F_ndV_n(y) = d(y) + (n-1)d\log\eta_{S/n}([-1]_{S/n}) \cdot y.$$
\item[(v)] For all positive integers $n$ and $a \in A$,
$$F_nd\eta_S([a]_S) = \eta_{S/n}([a]_{S/n}^{n-1})d\eta_{S/n}([a]_{S/n}).$$
\end{enumerate}
A map of Witt complexes is a natural map of graded rings
$f \colon E_S^{\cdot} \to E_S'{}^{\cdot}$ such that $f\eta = \eta'$,
$fd = d'f$, $fF_n = F_n'f$, and $fV_n = V_n'f$.
\end{definition}

\begin{remark}\label{dlog[-1]}(a) For $T \subset S$ a pair of truncation
sets, we write $R_T^S \colon E_S^{\cdot} \to
E_T^{\cdot}$ for the map of graded rings that is part of
the structure of a Witt complex and call it the restriction from $S$
to $T$. 

\noindent~(b) Every Witt complex is determined, up to canonical
isomorphism, by its value on finite truncation sets. Indeed, for every
truncation set $S$ and non-negative integer $q$, the maps in~(a)
defined a bijection from $E_S^q$ to the limit with respect to the
restriction maps of the $E_T^q$, where $T \subset S$ ranges over the
finite sub-truncation sets. In particular, if we write $a \in
\mathbb{W}(A)$ as a convergent sum $a = \sum_{n \in
  S}V_n([a_n]_{S/n})$ as in Lemma~\ref{variousrelations}~(i), then the
element $\smash{ d\eta_S(a) \in E_S^1 }$, too, admits the convergent
sum expression
$$d\eta_S(a) = \sum_{n \in S} dV_n([a_n]_{S/n}).$$

\noindent~(c) The element $d\log\eta_S([-1]_S)$ is annihilated by
$2$. Indeed, since $d$ is a derivation,
$$2d\log\eta_S([-1]_S) = d\log\eta_S([1]_S) = 0.$$
Therefore, $d\log\eta_S([-1]_S)$ is zero if $2$ is invertible in $A$
and hence in $\mathbb{W}_S(A)$. It is also zero if $2 = 0$ in $A$
since, in this case, $[-1]_S = [1]_S$. Finally, it follows from the
general formula $[-1]_S = -[1]_S + V_2([1]_{S/2})$ proved in
Addendum~\ref{teichmuller} that $d\log\eta_S([-1]_S)$ is zero if every
$n \in S$ is odd.

\noindent~(d) Let $A$ be a ring and let $E_S^{\boldsymbol{\cdot}}$ be
a Witt complex over $A$. For every non-negative integer $q$, the pair
$(E_{\N}^q,\lambda_{E^q})$ consisting of $E_{\N}^q$ considered as a
$\mathbb{W}(A)$-module via the ring homomorphism $\eta_{\N} \colon
\mathbb{W}(A) \to E_{\N}^0$ and of the maps
$\lambda_{E^q,n} = F_n \colon E_{\N}^q \to E_{\N}^q$ is a module over
the $\lambda$-ring $(\mathbb{W}(A),\Delta_A)$ in the sense of
Definition~\ref{lambdamodule}. Moreover, we may substitute the
axiom~(v) in Definition~\ref{bigwittcomplex} by the statement~(v')
that the map
$$\xymatrix{
{ (\mathbb{W}(A),\Delta_A) } \ar[r]^-{d} &
{ (E_{\N}^1,\lambda_{E^1}) } \cr
}$$
is a derivation in the sense of
Definition~\ref{lambdaderivation}. Indeed, it follows from
Theorem~\ref{dividedfrobenius} that~(i)--(iv) and~(v') imply~(v),
and we will show in Proposition~\ref{frobeniuscompatible} below
that~(i)--(v) imply~(v').
\end{remark}

\begin{lemma}\label{wittcomplexrelations}Let $m$ and $n$ be positive
integers, let $c = (m,n)$ be the greatest common divisor, and let $i$
and $j$ be any pair of integers such that $mi+nj = c$. The following
relations hold in every Witt complex.
$$\begin{aligned}
{} & dF_n = nF_nd, \hskip 4mm V_nd = ndV_n, \cr
{} & F_mdV_n = idF_{m/c}V_{n/c} + jF_{m/c}V_{n/c}d +
(c-1)d\log\eta_{S/m}([-1]_{S/m}) \cdot F_{m/c}V_{n/c}, \cr
{} & d\log\eta_S([-1]_S) =  \textstyle{\sum_{r \geqslant 1}
  2^{r-1}dV_{2^r}\eta_{S/2^r}([1]_{S/2^r})},  \cr
{} & d\log\eta_S([-1]_S) \cdot d\log\eta_S([-1]_S) = 0, \hskip4mm 
dd\log\eta_S([-1]_S) = 0, \cr
{} & \smash{ F_n(d\log\eta_S([-1]_S)) = d\log\eta_{S/n}([-1]_{S/n}), } \cr
%
\end{aligned}$$
\end{lemma}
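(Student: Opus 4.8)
The plan is to deduce every identity in the statement directly from axioms (i)--(v) of Definition~\ref{bigwittcomplex}, the only external input being that $d\log\eta_S([-1]_S)$ is annihilated by $2$ (Remark~\ref{dlog[-1]}(c)). I would treat the identities roughly in the order $F_n(d\log\eta_S([-1]_S)) = d\log\eta_{S/n}([-1]_{S/n})$ and $dF_n = nF_nd$ first, then $V_nd = ndV_n$, then the $F_mdV_n$-formula, and finally the three $d\log[-1]$-identities; each reduces to the case $n = p$ prime (or is built from that case) since $F_mF_n = F_{mn}$ and $V_mV_n = V_{mn}$.

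For $F_p(d\log\eta_S([-1]_S))$: writing $\epsilon = \eta_S([-1]_S)$, which is its own inverse because $[-1]_S^2 = [1]_S$, one has $d\log\epsilon = \epsilon\,d\epsilon$, and $F_p(\epsilon\,d\epsilon) = \eta_{S/p}([(-1)^p]_{S/p})\cdot F_pd\eta_S([-1]_S)$ since $F_p$ is a ring homomorphism (axiom (iii)) commuting with $\eta$ (axiom (ii)) and carrying $[a]_S$ to $[a^p]_{S/p}$; axiom (v) with $a = -1$ evaluates $F_pd\eta_S([-1]_S)$, and both parities of $p$ yield $\eta_{S/p}([-1]_{S/p})\,d\eta_{S/p}([-1]_{S/p})$. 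For $dF_p = pF_pd$: for $x \in E_S^q$ I would compute $F_pdV_pF_p(x)$ in two ways. On one hand the projection formula gives $V_pF_p(x) = x\cdot V_p(\eta_{S/p}([1]_{S/p}))$, so the Leibniz rule, $F_pV_p = p$ and axiom (iv) applied to $\eta_{S/p}([1]_{S/p})$ produce $pF_p(dx) + (-1)^q(p-1)F_p(x)\cdot d\log\eta_{S/p}([-1]_{S/p})$; on the other hand axiom (iv) applied to $F_p(x)$ gives $dF_p(x) + (p-1)d\log\eta_{S/p}([-1]_{S/p})\cdot F_p(x)$. Anti-symmetry makes the two $d\log[-1]$-terms cancel, leaving $dF_p(x) = pF_p(dx)$; the general $n$ follows by $F_mF_n = F_{mn}$ and induction on the number of prime factors.

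The identity $V_pd = pdV_p$ is the subtlest of the first group. Applying $V_p$ and the projection formula to axiom (iv) yields $dV_p(y)\cdot V_p(\eta_{S/p}([1]_{S/p})) = V_p(dy) + (p-1)V_p(d\log\eta_{S/p}([-1]_{S/p})\cdot y)$; on the other hand, expanding $d\big(V_p(y)\cdot V_p(\eta_{S/p}([1]_{S/p}))\big)$ by Leibniz and collapsing the left-hand factor via the projection formula and $F_pV_p = p$ gives $pdV_p(y) = dV_p(y)\cdot V_p(\eta_{S/p}([1]_{S/p})) + (-1)^qV_p(y)\cdot dV_p(\eta_{S/p}([1]_{S/p}))$, where the last term equals $(p-1)V_p(d\log\eta_{S/p}([-1]_{S/p})\cdot y)$ by the projection formula, axiom (iv) and anti-symmetry. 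Combining, the discrepancy is $2(p-1)V_p(d\log\eta_{S/p}([-1]_{S/p})\cdot y)$, which is $0$ because $2\cdot d\log\eta_{S/p}([-1]_{S/p}) = 0$. With $dF_n = nF_nd$ and $V_nd = ndV_n$ available, the $F_mdV_n$-formula is essentially formal: with $c = (m,n)$, decompose $F_m = F_{m/c}\circ F_c$ and $V_n = V_c\circ V_{n/c}$, apply axiom (iv) to the middle composite $F_cdV_c$, push the arising $d\log[-1]$-term through $F_{m/c}$ by the Frobenius--$d\log$ identity just proved, and rewrite $F_{m/c}dV_{n/c}$ as $i\,dF_{m/c}V_{n/c} + j\,F_{m/c}V_{n/c}d$ using $dF_{m/c} = (m/c)F_{m/c}d$, $V_{n/c}d = (n/c)dV_{n/c}$ and $(m/c)i + (n/c)j = 1$.

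For the $d\log[-1]$-identities I would use Remark~\ref{dlog[-1]}(c) to write $\eta_S([-1]_S) = -1 + V_2\eta_{S/2}([1]_{S/2})$, so $d\log\eta_S([-1]_S) = -dV_2\eta_{S/2}([1]_{S/2}) + V_2\eta_{S/2}([1]_{S/2})\cdot dV_2\eta_{S/2}([1]_{S/2})$; the projection formula and axiom (iv) identify the product term with $V_2(d\log\eta_{S/2}([-1]_{S/2}))$, while $V_2d = 2dV_2$ shows $dV_2\eta_{S/2}([1]_{S/2})$ is $2$-torsion, so the sign is immaterial and $d\log\eta_S([-1]_S) = V_2(d\log\eta_{S/2}([-1]_{S/2})) + dV_2\eta_{S/2}([1]_{S/2})$. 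Iterating, using $V_2d = 2dV_2$ to turn $V_{2^r}dV_2$ into $2^rdV_{2^{r+1}}$, yields the stated sum once $2^k \notin S$, and the general truncation set follows since $E_S^{\boldsymbol{\cdot}}$ is the limit of $E_{S_\alpha}^{\boldsymbol{\cdot}}$ over finite $S_\alpha \subset S$. Then $dd\log\eta_S([-1]_S) = d(\epsilon\,d\epsilon) = 2(d\epsilon)^2 = 2(d\log\eta_S([-1]_S))^2$ by Leibniz and axiom (i), and this vanishes because $2\cdot d\log\eta_S([-1]_S) = 0$; differentiating the sum formula and using $dd(-) = d\log\eta_S([-1]_S)\cdot d(-)$ gives $dd\log\eta_S([-1]_S) = (d\log\eta_S([-1]_S))^2$, whence the square vanishes as well. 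The hard part, I expect, is not any single computation but keeping the truncation-set bookkeeping straight and — in the $V_pd$ and $d\log[-1]$ arguments — noticing that the leftover terms are exactly killed by the $2$-torsion of $d\log\eta_S([-1]_S)$.
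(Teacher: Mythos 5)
Your proposal is correct, and for the first five relations (the Frobenius--$d\log$ identity, $dF_n = nF_nd$, $V_nd = ndV_n$, the $F_mdV_n$-formula, and the sum formula for $d\log\eta_S([-1]_S)$) it follows the paper's own argument essentially line for line; the only cosmetic difference is that you reduce to prime $n$ and build up via $F_mF_n = F_{mn}$, whereas the paper runs the same Leibniz/projection-formula/axiom-(iv)/$2$-torsion computations directly for general $n$.

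The genuine divergence is in the last two identities, $(d\log\eta_S([-1]_S))^2 = 0$ and $dd\log\eta_S([-1]_S) = 0$, where your route is cleaner than the paper's. The paper first shows $dV_2(d\log\eta_{S/2}([-1]_{S/2})) = 0$ by differentiating the sum formula under $V_2$, then carries out a somewhat delicate Leibniz manipulation on $d(V_2\eta([1])\,dV_2\eta([1]))$ to extract $(d\eta([-1]_S))^2 = 0$, and only afterward deduces $dd\log\eta_S([-1]_S) = 0$. You reverse the order and shortcut the argument: writing $\epsilon = \eta_S([-1]_S)$, the Leibniz rule, axiom (i), and $\epsilon^2 = 1$ give $d(\epsilon\,d\epsilon) = 2(d\epsilon)^2 = 2(d\log\eta_S([-1]_S))^2$, which vanishes because $2\,d\log\eta_S([-1]_S) = 0$; then differentiating the sum formula term by term and using $dd = d\log\eta_S([-1]_S)\cdot d$ gives $dd\log\eta_S([-1]_S) = (d\log\eta_S([-1]_S))^2$, so the square vanishes too. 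This buys you a shorter proof and a more transparent picture of exactly where the $2$-torsion of $d\log\eta_S([-1]_S)$ does its work, at no cost in generality.
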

 
\begin{proof}The following calculation verifies the first two relations.
$$\begin{aligned}
dF_n(x) & = F_ndV_nF_n(x) - (n-1)d\log\eta[-1] \cdot F_n(x) \cr
{} & = F_nd(V_n\eta([1]) \cdot x) - (n-1)d\log\eta([-1]) \cdot F_n(x) \cr
{} & = F_n(dV_n\eta([1]) \cdot x + V_n\eta([1]) \cdot dx) - (n-1)d\log\eta([-1])
\cdot F_n(x) \cr
{} & = F_ndV_n\eta([1]) \cdot F_n(x) + F_nV_n\eta([1]) \cdot F_nd(x)
- (n-1)d\log\eta([-1]) \cdot F_n(x) \cr
{} & = (n-1)d\log\eta([-1]) \cdot F_n(x) + nF_nd(x) - (n-1)d\log\eta([-1]) \cdot
F_n(x) \cr
{} & = nF_nd(x) \cr
\end{aligned}$$
$$\begin{aligned}
V_nd(x) & = V_n(F_ndV_n(x) - (n-1)d\log\eta([-1]) \cdot x) \cr
{} & = V_n\eta([1]) \cdot dV_n(x) - (n-1)V_n(d\log\eta([-1]) \cdot x) \cr
{} & = d(V_n\eta([1]) \cdot V_n(x)) - dV_n\eta([1]) \cdot V_n(x) -
(n-1)V_n(d\log\eta([-1]) \cdot x) \cr
{} & = dV_n(F_nV_n\eta([1]) \cdot x) - V_n(F_ndV_n\eta([1]) \cdot x) -
(n-1)V_n(d\log\eta([-1]) \cdot x) \cr
{} & = ndV_n(x) - 2(n-1)V_n(d\log\eta([-1]) \cdot x) \cr
{} & = ndV_n(x) \cr
\end{aligned}$$
Next, the last formula follows from $F_m([-1]) = [-1]^m$ and the
calculation 
$$\begin{aligned}
F_m(d\log\eta([-1]_S)) & = F_m(\eta([-1]^{-1})d\eta([-1])) =
F_m\eta([-1]^{-1})F_md\eta([-1]) \cr
{} & = \eta([-1]^{-m})\eta([-1]^{m-1})d\eta([-1]) \cr
{} & = \eta([-1]^{-1})d\eta([-1]) = d\log\eta([-1]). \cr
\end{aligned}$$
Using the three relations proved thus far together with the projection
formula, we find
$$\begin{aligned}
F_mdV_n(x) & = F_{m/c}F_cdV_cV_{n/c}(x) \cr
{} & = F_{m/c}dV_{n/c}(x) + (c-1)d\log\eta([-1]) \cdot F_{m/c}V_{n/c}(x) \cr
{} & = ((m/c)i + (n/c)j)F_{m/c}dV_{n/c}(x) + (c-1)d\log\eta([-1]) \cdot
F_{m/c}V_{n/c}(x) \cr 
{} & = idF_{m/c}V_{n/c}(x) + jF_{m/c}V_{n/c}d(x) + (c-1)d\log\eta([-1]) \cdot
F_{m/c}V_{n/c}(x). \cr 
\end{aligned}$$
Next, to prove the stated formula for $d\log\eta([-1]_S)$, we use that,
by Addendum~\ref{teichmuller}, we have
$[-1]_S = -[1]_S + V_2([1]_{S/2})$ to see that
$$\begin{aligned}
d\log\eta([-1]_S) & = \eta([-1]_S)d\eta([-1]_S) \cr
{} & = \eta(-[1]_S + V_2([1]_{S/2})d\eta(-[1]_S + V_2([1]_{S/2}) \cr
{} & = -dV_2\eta([1]_{S/2}) + V_2(F_2dV_2\eta([1]_{S/2})) \cr
{} & = -dV_2\eta([1]_{S/2}) + V_2(d\log\eta([-1]_{S/2})) \cr
{} & = dV_2\eta([1]_{S/2}) + V_2(d\log\eta([-1]_{S/2})), \cr
\end{aligned}$$
from which the stated formula follows by easy induction. Here, in the
last equality, we have used that $2dV_2\eta([1]_{S/2}) = V_2d\eta([1]_{S/2}) =
0$. Using this, we find that
$$\begin{aligned}
dV_2(d\log\eta([-1]_{S/2})) & = \sum_{r \geqslant 1}
2^rddV_{2^{r+1}}\eta([1]_{S/2^{r+1}}) \cr
{} & = \sum_{r \geqslant 1}
2^r d\log\eta([-1]_S)  \cdot dV_{2^{r+1}}\eta([1]_{S/2^{r+1}}) \cr
\end{aligned}$$
which is zero, since $2d\log\eta([-1]_S) = 0$. Now, using
Addendum~\ref{teichmuller}, we find that
$$\begin{aligned}
(d\log\eta([-1]_S))^2 & = (d\eta([-1]_S))^2 = (dV_2\eta([1]_{S/2}))^2 \cr
{} & = d(V_2\eta([1]_{S/2}) \cdot dV_2\eta([1]_{S/2})) -
V_2\eta([1]_{S/2}) \cdot ddV_2\eta([1]_{S/2}) \cr
{} & = dV_2(d\log\eta([-1]_{S/2})) - V_2\eta([1]_{S/2}) \cdot dV_2d\log\eta([-1]_{S/2}) \cr
{} & = dV_2(d\log\eta([-1]_{S/2})) \cdot \eta([1]_S - V_2([1]_{S/2})), \cr
\end{aligned}$$
which is zero, since the first factor in the bottom line is zero, by
what was just proved. This, in turn, shows that $(d\eta([-1]_S))^2 =
0$, from which we find that 
$$\begin{aligned}
dd\log\eta([-1]_S) & = d(\eta([-1]_S) \cdot d\eta([-1]_S)) \cr
{} & = d\eta([-1]_S) \cdot d\eta([-1]_S) + \eta([-1]_S) \cdot dd\eta([-1]_S) \cr
{} & = \eta([-1]_S)dd\eta([-1]_S) \cr
{} & =\eta([-1]_S)d\log\eta([-1]_S)d\eta([-1]_S) \cr
{} & = d\eta([-1]_S)d\eta([-1]_S) = 0. \cr
\end{aligned}$$
This completes the proof.
\end{proof}

\begin{prop}\label{frobeniuscompatible}For every Witt complex
$E_S^{\boldsymbol{\cdot}}$ over $A$ and every $m \in \N$, the diagram
$$\xymatrix{
{ \Omega_{\mathbb{W}_S(A)}^1 } \ar[r]^(.55){\eta_S} \ar[d]^{F_m} &
{ E_S^1 } \ar[d]^{F_m} \cr
{ \Omega_{\mathbb{W}_{S/m}(A)}^1 } \ar[r]^(.55){\eta_{S/m}} &
{ E_{S/m}^1 } \cr
}$$
commutes. Here, the horizontal maps take $a_0da_1$ to
$\eta(a_0)d\eta(a_1)$.
\end{prop}

\begin{proof}Since the restriction map $\smash{ R_S^{\N} \colon
\Omega_{\mathbb{W}(A)}^1 \to \Omega_{\mathbb{W}_S(A)}^1 }$ is
surjective and satisfies that both
$\smash{ F_mR_S^{\N} = R_{S/m}^{\N}F_m }$ and
$\smash{ \eta_S R_S^{\N} = R_S^{\N} \eta_{\N} }$, we may assume that
$S = \N$. In addition, by Remark~\ref{dlog[-1]}~(b), it will suffice to
show that for every $n \in \N$ and $a \in A$,
$$F_pdV_n\eta_{\N}([a]_{\N}) = \eta_{\N} F_pdV_n([a]_{\N})$$
as elements of $E_{\N}^1$. To ease notation, we suppress the subscript $\N$. We
first suppose that $p$ does not divide $n$ and set $k = (1-n^{p-1})/p$
and $l = n^{p-2}$ such that, in particular, $kp+ln = 1$. By
Lemma~\ref{wittcomplexrelations}, we have
$$\begin{aligned}
F_pdV_n\eta([a]) 
{} & = k \cdot dV_nF_p\eta([a]) + l \cdot V_nF_pd\eta([a]) \cr
{} & = k \cdot dV_n\eta([a]^p) + l \cdot
V_n(\eta([a])^{p-1}d\eta([a])) \cr
{} & = k \cdot dV_n\eta([a]^p) + l \cdot V_n\eta([a]^{p-1}d[a]) \cr
\end{aligned}$$
Moreover, arguing as in the proof of Lemma~\ref{frobeniusdescends}
above, we find that
$$\begin{aligned}
\eta F_pdV_n([a])
{} & = \eta(V_n([a])^{p-1} \cdot dV_n([a]) + d\Delta_pV_n([a])) \cr
{} & = \eta(l \cdot V_n([a]^{p-1}) \cdot dV_n([a]) + k \cdot dV_n([a]^p)) \cr
{} & = l \cdot V_n\eta([a]^{p-1}) \cdot dV_n\eta([a]) + k \cdot dV_n\eta([a]^p) \cr
{} & = l \cdot V_n(\eta([a]^{p-1}) \cdot F_ndV_n\eta([a])) + k \cdot dV_n\eta([a]^p) \cr
{} & = l \cdot V_n\eta([a]^{p-1}d[a]) + k \cdot dV_n\eta([a]^p) \cr
\end{aligned}$$
where the last equality uses that $n^{p-2}(n-1) d\log\eta([-1])$ is
zero. This proves that the desired equality holds if $p$ does not
divide $n$. Suppose next that $p$ divides $n$ and write $n = pr$. We
consider the cases $p = 2$ and $p$ odd separately. First, for $p$ odd, 
$$\begin{aligned}
F_pdV_n\eta([a])
{} & = dV_r\eta([a]) \cr
\eta F_pdV_n([a])
{} & = \eta(V_n([a])^{p-1} \cdot dV_n([a]) + d( \frac{F_pV_n([a]) -
  V_n([a])^p}{p} )) \cr
{} & = \eta(V_n([a])^{p-1} \cdot dV_n([a]) + dV_r([a]) -
p^{p-2}r^{p-1}dV_n([a]^p)) \cr
{} & = V_n(\eta([a]))^{p-1} \cdot dV_n\eta([a]) + dV_r\eta([a]) -
p^{p-2}r^{p-1}dV_n(\eta([a])^p), \cr
\end{aligned}$$
and the following calculation shows that the first and third terms cancel,
$$\begin{aligned}
{} p^{p-2}r^{p-1}dV_n(\eta([a])^p)  
{} & = p^{p-3}r^{p-2}V_nd(\eta([a])^p) \cr
{} & = p^{p-2}r^{p-2}V_n(\eta([a])^{p-1}d\eta([a])) \cr
{} & = n^{p-2}V_n(\eta([a])^{p-1}) \cdot dV_n\eta([a]) \cr
{} & = V_n(\eta([a]))^{p-1} \cdot dV_n\eta([a]). \cr
\end{aligned}$$
Here, the third equality follows from
Definition~\ref{bigwittcomplex}~(iii)--(iv) and from the fact that
$n^{p-2}(n-1)d\log[-1]$ vanishes. Finally, if $p = 2$, then
$$\begin{aligned}
F_2dV_n\eta([a]) & = dV_r\eta([a]) + d\log\eta([-1]) \cdot V_r\eta([a]) \cr
\eta F_2dV_n([a]) 
{} & = \eta(V_n([a]) \cdot dV_n([a]) + d( \frac{F_2V_n([a]) - V_n([a])^2}{2}
)) \cr
{} & = \eta(V_n([a]) \cdot dV_n([a]) + dV_r([a]) - rdV_n([a]^2)) \cr
{} & = V_n\eta([a]) \cdot dV_n\eta([a]) + dV_r\eta([a]) - rdV_n(\eta([a])^2), \cr
\end{aligned}$$
and hence, we must show that
$$d\log\eta([-1]) \cdot V_r\eta([a]) = V_n\eta([a]) \cdot dV_n\eta([a]) -
rdV_n(\eta([a])^2).$$
Suppose first that $r = 1$. By Addendum~\ref{teichmuller}, $[-1] =
-[1] + V_2([1])$, so that
$$d\log\eta([-1]) = V_2(\eta([1])) \cdot dV_2\eta([1]) - dV_2\eta([1]),$$
and hence, using the Witt complex axioms, we find
$$\begin{aligned}
{} & d\log\eta([-1]) \cdot \eta([a])
= V_2(\eta([a])^2) \cdot dV_2\eta([1]) - \eta([a]) \cdot dV_2\eta([1]) \cr
{} & = V_2(\eta([a])^2 \cdot F_2dV_2\eta([1])) - d(\eta([a]) \cdot V_2\eta([1]))
+ d(\eta([a])) \cdot V_2\eta([1]) \cr
{} & = V_2(\eta([a])^2 \cdot d\log\eta([-1])) - dV_2(\eta([a])^2) +
V_2(\eta([a])d\eta([a])) \cr
{} & = V_2(\eta([a])) \cdot dV_2\eta([a]) - dV_2(\eta([a])^2) \cr
\end{aligned}$$
as desired. In general, we apply $V_r$ to the formula that we just proved. This gives
$$\begin{aligned}
{} & d\log\eta([-1]) \cdot V_r(\eta([a]))
= V_r(V_2(\eta([a])) \cdot dV_2\eta([a])) - rdV_n(\eta([a])^2) \cr
{} & = V_n(\eta([a]) \cdot F_2dV_2\eta([a])) - rdV_n(\eta([a])^2) \cr
{} & = V_n(\eta([a]) \cdot F_ndV_n\eta([a])) - rdV_n(\eta([a])^2) \cr
{} & = V_n\eta([a]) \cdot dV_n\eta([a]) - rdV_n(\eta([a])^2), \cr
\end{aligned}$$
where we have $F_2dV_2 = F_ndV_n$, since $n$ is even. This completes
the proof.
\end{proof}

\begin{cor}\label{extendedeta}Let $E_S^{\cdot}$ be a
Witt complex over the ring $A$. There is a unique natural
homomorphism of graded rings 
$$\xymatrix{
{ \check{\Omega}_{\mathbb{W}_S(A)}^{\boldsymbol{\cdot}} } \ar[r]^-{\eta_S} &
{ E_S^{\boldsymbol{\cdot}} } \cr
}$$
that extends the natural ring homomorphism $\eta_S \colon \mathbb{W}_S(A) \to
E_S^0$ and commutes with the derivations.  In addition,
for every positive integer $m$, the diagram
$$\xymatrix{
{ \check{\Omega}_{\mathbb{W}_S(A)}^{\cdot} }
\ar[r]^(.55){\eta_S} \ar[d]^{F_m} &
{ E_S^{\cdot} } \ar[d]^{F_m} \cr
{ \check{\Omega}_{\mathbb{W}_{S/m}(A)}^{\cdot} }
\ar[r]^(.55){\eta_{S/m}} &
{ E_{S/m}^{\cdot} } \cr
}$$
commutes.
\end{cor}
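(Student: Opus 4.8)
The plan is to construct $\eta_S$ first in the universal case $S=\N$ and then obtain it for an arbitrary truncation set by descent along the restriction $R_S^{\N}$. Uniqueness is immediate: $\check{\Omega}_{\mathbb{W}_S(A)}^{\boldsymbol{\cdot}}$ is generated as a graded ring by $\mathbb{W}_S(A)$ in degree $0$ and $d\mathbb{W}_S(A)$ in degree $1$, so any $\eta_S$ as in the statement must send $a_0\,da_1\cdots da_q$ to $\eta_S(a_0)\,d\eta_S(a_1)\cdots d\eta_S(a_q)$. To build the map for $S=\N$, I would start from degrees $\leqslant 1$. By Remark~\ref{alternativeaxiom} the group $E_{\N}^1$ is a $(\mathbb{W}(A),\Delta_A)$-module with structure maps $F_n$, and $d\circ\eta_{\N}\colon\mathbb{W}(A)\to E_{\N}^1$ is a derivation in the sense of Definition~\ref{lambdaderivation}. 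By Lemma~\ref{universalderivation} together with Theorem~\ref{universalderivations}, which identifies $\Omega_{(\mathbb{W}(A),\Delta_A)}$ with $\Omega_{\mathbb{W}(A)}$, this derivation factors uniquely through a map of $(\mathbb{W}(A),\Delta_A)$-modules $\Omega_{\mathbb{W}(A)}^1\to E_{\N}^1$ sending $da$ to $d\eta_{\N}(a)$; in particular the map is $\mathbb{W}(A)$-linear and commutes with every $F_n$. Regarding $E_{\N}^{\boldsymbol{\cdot}}$ as a graded $\mathbb{W}(A)$-algebra via $\eta_{\N}$, the universal property of the tensor algebra extends it to a homomorphism of graded rings $T_{\mathbb{W}(A)}^{\boldsymbol{\cdot}}\Omega_{\mathbb{W}(A)}^1\to E_{\N}^{\boldsymbol{\cdot}}$.

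The main work is to show that this homomorphism annihilates the ideals $J^{\boldsymbol{\cdot}}$ and $K^{\boldsymbol{\cdot}}$, so that it descends to $\check{\Omega}_{\mathbb{W}(A)}^{\boldsymbol{\cdot}}$. Vanishing on $K^{\boldsymbol{\cdot}}$ is routine: a generator $F_p\,dV_p(a)-da-(p-1)d\log[-1]\cdot a$ maps, using $\eta_{\N}V_p=V_p\eta_{\N}$ and the $F_p$-compatibility above, to $F_p\,dV_p(\eta_{\N}(a))-d\eta_{\N}(a)-(p-1)\,d\log\eta_{\N}([-1])\cdot\eta_{\N}(a)$, which is zero by axiom~(iv) of Definition~\ref{bigwittcomplex}. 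The delicate point is that a generator $da\otimes da-d\log[-1]\otimes F_2\,da$ of $J^{\boldsymbol{\cdot}}$ goes to zero, i.e. that
$$(d\eta_{\N}(a))^2=d\log\eta_{\N}([-1])\cdot F_2(d\eta_{\N}(a))$$
in $E_{\N}^2$. Both sides are additive in $a$ — the left because $E_{\N}^{\boldsymbol{\cdot}}$ is anti-symmetric, so the cross terms in $(d\eta_{\N}(a+b))^2$ cancel, and the right because $F_2$ and $d\eta_{\N}$ are additive — and since $E_{\N}^{\boldsymbol{\cdot}}=\lim_S E_S^{\boldsymbol{\cdot}}$ and every Witt vector is a sum, convergent in the inverse-limit topology, of Verschiebungs $V_n([c]_{\N/n})$ (Lemma~\ref{variousrelations}(i)), it suffices to verify the identity for $a=V_n([c]_{\N/n})$. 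Using $d\eta_{\N}V_n=dV_n\eta_{\N}$, axiom~(iv), the projection formula, and the relations $V_nd=n\,dV_n$, $dF_n=nF_nd$ and $F_n(d\log\eta_{\N}([-1]))=d\log\eta_{\N}([-1])$ of Lemma~\ref{wittcomplexrelations}, one reduces to the case $n=1$. There axiom~(v) gives $F_2\,d\eta_{\N}([c])=\eta_{\N}([c])\,d\eta_{\N}([c])$; applying $d$ and using $dF_2=2F_2d$, axioms~(i) and~(iii), and $F_2(d\log\eta_{\N}([-1]))=d\log\eta_{\N}([-1])$ shows that $d(\eta_{\N}([c])\,d\eta_{\N}([c]))=2\,d\log\eta_{\N}([-1])\cdot F_2\,d\eta_{\N}([c])=0$, the last equality because $d\log\eta_{\N}([-1])$ is $2$-torsion; expanding $d(\eta_{\N}([c])\,d\eta_{\N}([c]))$ by the Leibniz rule together with $d(d\eta_{\N}([c]))=d\log\eta_{\N}([-1])\cdot d\eta_{\N}([c])$ then yields $(d\eta_{\N}([c]))^2=\eta_{\N}([c])\,d\log\eta_{\N}([-1])\,d\eta_{\N}([c])=d\log\eta_{\N}([-1])\cdot F_2\,d\eta_{\N}([c])$. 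I expect the reduction of the general Verschiebung case to this Teichm\"{u}ller case to be the main obstacle, requiring careful bookkeeping with the Verschiebung identities and with possible torsion in $E_{\N}^{\boldsymbol{\cdot}}$.

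Once the map $\eta_{\N}\colon\check{\Omega}_{\mathbb{W}(A)}^{\boldsymbol{\cdot}}\to E_{\N}^{\boldsymbol{\cdot}}$ is available it automatically commutes with $d$ and with each $F_m$: by Proposition~\ref{uniquederivation} the derivation $d$ on $\check{\Omega}_{\mathbb{W}(A)}^{\boldsymbol{\cdot}}$ is determined by its effect in degrees $\leqslant 1$ via the graded Leibniz rule, and by Proposition~\ref{omegafrobenius} each $F_m$ is determined by its effect in degrees $\leqslant 1$ via multiplicativity; since the corresponding structure on $E_{\N}^{\boldsymbol{\cdot}}$ (axioms~(i) and~(iii)) has the same form and the identities $\eta_{\N}d=d\eta_{\N}$, $\eta_{\N}F_m=F_m\eta_{\N}$ hold on degree-$\leqslant 1$ generators by construction, they hold in all degrees. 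Finally, for a general truncation set $S$ one defines $\eta_S$ by $\eta_S\circ R_S^{\N}=R_S^{\N}\circ\eta_{\N}$; this is legitimate because $R_S^{\N}\circ\eta_{\N}$ annihilates the graded ideals generated by $I_S(A)$ and $dI_S(A)$ — indeed $R_S^{\N}\eta_{\N}(I_S(A))=\eta_S R_S^{\N}(I_S(A))=0$, and $R_S^{\N}$ commutes with $d$ — so it factors through $\check{\Omega}_{\mathbb{W}_S(A)}^{\boldsymbol{\cdot}}$. Naturality in $S$ and compatibility with $d$ descend from the case $S=\N$, and the asserted commutation with $F_m$ over $S$ follows from the case $S=\N$ using surjectivity of $R_S^{\N}$ and naturality of $F_m$ in the truncation-set variable.
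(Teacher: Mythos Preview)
Your argument is almost complete, and in fact you already hold the key that would finish it --- but you set it down and reach for a harder tool instead. The gap you yourself flag (``the reduction of the general Verschiebung case to the Teichm\"{u}ller case'') is real: you have not carried it out, and it is not obvious how to do it cleanly. The paper avoids this reduction entirely, and so can you.

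Here is the point. Via Remark~\ref{alternativeaxiom} (which rests on Proposition~\ref{frobeniuscompatible}) you have already established that the degree-$1$ map $\Omega_{\mathbb{W}(A)}^1\to E_{\N}^1$, $da\mapsto d\eta_{\N}(a)$, commutes with every $F_n$. In particular, for every $a\in\mathbb{W}(A)$ --- not just Teichm\"{u}ller representatives ---
\[
F_2\,d\eta_{\N}(a)\;=\;\eta_{\N}\bigl(F_2\,da\bigr)\;=\;\eta_{\N}\bigl(a\,da+d\Delta_2(a)\bigr)\;=\;\eta_{\N}(a)\,d\eta_{\N}(a)+d\eta_{\N}\Delta_2(a).
\]
Now apply $d$ to both sides. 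On the left, $dF_2\,d\eta_{\N}(a)=2F_2\,dd\eta_{\N}(a)=2F_2\bigl(d\log\eta_{\N}([-1])\cdot d\eta_{\N}(a)\bigr)=0$, since $d\log\eta_{\N}([-1])$ is $2$-torsion. On the right, the Leibniz rule and $dd=d\log\eta_{\N}([-1])\cdot d$ give
\[
\bigl(d\eta_{\N}(a)\bigr)^2+d\log\eta_{\N}([-1])\cdot\bigl(\eta_{\N}(a)\,d\eta_{\N}(a)+d\eta_{\N}\Delta_2(a)\bigr)
=\bigl(d\eta_{\N}(a)\bigr)^2+d\log\eta_{\N}([-1])\cdot F_2\,d\eta_{\N}(a).
\]
Equating and using that $d\log\eta_{\N}([-1])$ is $2$-torsion yields the $J$-relation $\bigl(d\eta_{\N}(a)\bigr)^2=d\log\eta_{\N}([-1])\cdot F_2\,d\eta_{\N}(a)$ for all $a$ at once. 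This is precisely the paper's argument: it is nothing more than your Teichm\"{u}ller computation, but fed with the general $F_2$-compatibility you had already proved rather than only axiom~(v). The additivity/Verschiebung reduction is then unnecessary. Everything else in your proposal --- the handling of $K^{\boldsymbol{\cdot}}$, the extension of $d$ and $F_m$ to higher degrees, and the descent to general $S$ --- is correct and matches the paper.
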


\begin{proof}The map $\eta_S$ necessarily is given by
$$\eta_S(a_0da_1 \dots da_q) = \eta_S(a_0)d\eta_S(a_1) \dots
d\eta_S(a_q).$$
We show that this formula gives a well-defined map. First, from
Proposition~\ref{frobeniuscompatible}, we find that for all 
$a \in \mathbb{W}(A)$,
$$F_2d\eta_{\N}(a) = \eta_{\N}F_2d(a)
= \eta_{\N}(ada + d\Delta_2(a)) = \eta_{\N}(a)d\eta_{\N}(a) +
d\eta_{\N}\Delta_2(a).$$
Applying $d$ to this equation, the left-hand side becomes
$$dF_2d\eta_{\N}(a) = 2F_2dd\eta_{\N}(a) = 0$$
while the right-hand side becomes
$$\begin{aligned}
{} & d\eta_{\N}(a)d\eta_{\N}(a) + d\log\eta_{\N}([-1])_{\N} \cdot
(\eta_{\N}(a)d\eta_{\N}(a) + d\eta_{\N}\Delta_2(a)) \cr
{} & = d\eta_{\N}(a)d\eta_{\N}(a) + d\log\eta_{\N}([-1])_{\N} \cdot
F_2d\eta_{\N}(a). \cr
\end{aligned}$$
Hence, there is a well-defined map of graded rings $\eta_S
\colon \hat{\Omega}_{\mathbb{W}_S(A)}^{\boldsymbol{\cdot}} \to
E_S^{\boldsymbol{\cdot}}$ given by the formula stated at the
beginning of the proof, and by axiom~(iv) in
Definition~\ref{bigwittcomplex}, this map factors through the
canonical projection from $\smash{
  \hat{\Omega}_{\mathbb{W}_S(A)}^{\boldsymbol{\cdot}} }$ onto $\smash{
  \check{\Omega}_{\mathbb{W}_S(A)}^{\boldsymbol{\cdot}} }$. Finally,
Proposition~\ref{frobeniuscompatible} shows that the diagram in the
statement commutes.
\end{proof}

\begin{proof}[Proof of Theorem~\ref{bigdrwexists}]We recall that, in
the diagrams in the statement, the left-hand vertical maps were
defined in Lemma~\ref{variousmapsdescend}. We define maps of
graded rings
$$\xymatrix{ 
{ \check{\Omega}_{\mathbb{W}_S(A)}^{\boldsymbol{\cdot}} }
\ar[r]^-{\eta_S} &
{ \mathbb{W}_S\Omega_A^{\cdot} } \cr
}$$
as quotients by graded ideals $N_S^{\boldsymbol{\cdot}}$ and verify
that, in the diagrams in the statement, the right-hand vertical maps
$R_T^S$, $F_n$, and $d$ making the respective diagrams commute 
exist. We further define maps of graded abelian groups
$$\xymatrix{
{ \mathbb{W}_{S/n}\Omega_A^{\cdot} } \ar[r]^-{V_n} &
{ \mathbb{W}_S\Omega_A^{\cdot} } \cr
}$$
that make the following diagrams commute,
$$\xymatrix{
{ \mathbb{W}_{S/n}(A) } \ar[r]^{\eta_{S/n}} \ar[d]^{V_n} &
{ \mathbb{W}_{S/n}\Omega_A^0 } \ar[d]^{V_n} &
{ \mathbb{W}_{S/n}\Omega_A^{\boldsymbol{\cdot}} } \ar[r]^{V_n}
\ar[d]^{R_{T/n}^{S/n}} &
{ \mathbb{W}_S\Omega_A^{\boldsymbol{\cdot}} } \ar[d]^{R_T^S} \cr
{ \mathbb{W}_S(A) } \ar[r]^{\eta_S} &
{ \mathbb{W}_S\Omega_A^0 } &
{ \mathbb{W}_{T/n}\Omega_A^{\boldsymbol{\cdot}} } \ar[r]^{V_n} &
{ \mathbb{W}_T\Omega_A^{\boldsymbol{\cdot}} } \cr
}$$
$$\xymatrix{
{ \mathbb{W}_{S/n}\Omega_A^{\boldsymbol{\cdot}} \otimes
  \mathbb{W}_{S/n}\Omega_A^{\boldsymbol{\cdot}} } \ar[d]^{\mu} &
{ \mathbb{W}_{S/n}\Omega_A^{\boldsymbol{\cdot}} \otimes
  \mathbb{W}_S\Omega_A^{\boldsymbol{\cdot}} } \ar[l]_(.47){\id \otimes F_n}
\ar[r]^(.51){V_n \otimes \id} &
{ \mathbb{W}_S\Omega_A^{\boldsymbol{\cdot}} \otimes
  \mathbb{W}_S\Omega_A^{\boldsymbol{\cdot}} } \ar[d]^{\mu} \cr
{ \mathbb{W}_{S/n}\Omega_A^{\boldsymbol{\cdot}} } \ar[rr]^{V_n} &&
{ \mathbb{W}_S\Omega_A^{\boldsymbol{\cdot}}. } \cr
}$$
The definition of these maps, as $S$ ranges over all finite truncation
sets, $T \subset S$ over all sub-truncation sets, and $n$ over all
positive integers, will be by induction on the cardinality of $S$ and
will take up most of the proof. Once this is completed, we will show
that the combined structure is a Witt complex over $A$ and that 
it is initial among Witt complexes over $A$.

We define $\mathbb{W}_{\emptyset}\Omega_A^{\boldsymbol{\cdot}}$
to be the terminal graded ring, which is zero in all degrees, and
define $\eta_{\emptyset}$ to be the unique map of graded rings. So let
$S$ be a finite non-empty truncation set and 
assume, inductively, that the maps $\eta_T$, $R_U^T$, $F_n$, $d$, and
$V_n$ have been defined, for all proper truncation sets $T \subset S$,
all truncation sets $U \subset T$, and all positive integers $n$, with
the properties listed at the beginning of the proof. In this
situation, we define $\eta_S \colon
\check{\Omega}_{\mathbb{W}_S(A)}^{\boldsymbol{\cdot}} \to
\mathbb{W}_S\Omega_A^{\boldsymbol{\cdot}}$ to be the quotient map that
annihilates the graded ideal $N_S^{\,\cdot}$ generated by all sums
$$\sum_{\alpha} V_n(x_{\alpha})dy_{1,\alpha} \dots dy_{q,\alpha},
\hskip8mm
d( \sum_{\alpha} V_n(x_{\alpha})dy_{1,\alpha} \dots
dy_{q,\alpha}),$$
where the Witt vectors $x_{\alpha} \in \mathbb{W}_{S/n}(A)$ and
$y_{1,\alpha},\dots,y_{q,\alpha} \in \mathbb{W}_S(A)$ and the integers 
$n \geqslant 2$ and $q \geqslant 1$ are such that the sum
$$\eta_{S/n}(\sum_{\alpha} x_{\alpha}F_ndy_{1,\alpha} \dots F_ndy_{q,\alpha})$$
in $\mathbb{W}_{S/n}\Omega_A^q$ is zero; and for every
positive integer $n$, we define the map of graded abelian groups
$V_n \colon \mathbb{W}_{S/n}\Omega_A^{\boldsymbol{\cdot}} \to
\mathbb{W}_S\Omega_A^{\boldsymbol{\cdot}}$
by
$$V_n\eta_{S/n}(x F_ndy_1\dots F_ndy_q) = \eta_S(V_n(x)dy_1\dots
dy_q).$$
Here we use that every element of $\mathbb{W}_{S/n}\Omega_A^q$ can be
written as a sum of elements of the form $\eta_{S/n}(xF_ndy_1\dots F_ndy_q)$ 
with $x \in \mathbb{W}_{S/n}(A)$ and $y_1,\dots,y_q \in
\mathbb{W}_S(A)$. Indeed,
$$dx = F_ndV_n(x) - (n-1)d\log[-1]_{S/n} \cdot x
= F_ndV_n(x) - (n-1)xF_nd([-1]_S).$$
To prove the existence of the necessarily unique right-hand vertical
maps $R_T^S$, $d$, and $F_n$ making the diagrams in the statement of
the theorem commute, we must show that the left-hand vertical maps in
these diagrams satisfy 
$\eta_T(R_T^S(N_S^q)) = 0$, $\eta_S(d(N_S^q)) = 0$, and
$\eta_{S/m}(F_m(N_S^q)) = 0$, respectively, and to this end, we use 
the properties of the latter maps established in
Lemma~\ref{variousmapsdescend}. So we fix a positive integer $n$ and
an element
$$\omega = \sum_{\alpha} V_n(x_{\alpha})dy_{1,\alpha} \dots
dy_{q,\alpha} \in \check{\Omega}_{\mathbb{W}_S(A)}^q$$
with
$$\eta_{S/n}(\sum_{\alpha} x_{\alpha}F_ndy_{1,\alpha} \dots
F_ndy_{q,\alpha}) \in \mathbb{W}_{S/n}\Omega_A^q$$
equal to zero and show that $\eta_TR_T^S(\omega)$, $\eta_S(dd\omega)$,
$\eta_{S/m}F_m(\omega)$, and $\eta_{S/m}F_m(d\omega)$ all are
zero. First, in order to show that
$$\eta_TR_T^S(\omega)
= \eta_T(\sum_{\alpha} V_nR_{T/n}^{S/n}(x_{\alpha})dR_T^S(y_{1,\alpha})
\dots dR_T^S(y_{q,\alpha}))$$
is zero, it suffices by the definition of the ideal
$N_T^{\boldsymbol{\cdot}}$ to show that
$$\eta_{T/n}(\sum_{\alpha}R_{T/n}^{S/n}(x_{\alpha})F_ndR_T^S(y_{1,\alpha}) \dots
F_ndR_T^S(y_{q,\alpha}))$$
is zero. But this element is equal to
$$\eta_{T/n}R_{T/n}^{S/n}(\sum_{\alpha} x_{\alpha}F_ndy_{1,\alpha}
\dots F_ndy_{q,\alpha})$$
which, by the inductive hypothesis, is equal to
$$R_{T/n}^{S/n}\eta_{S/n}(\sum_{\alpha} x_{\alpha}F_ndy_{1,\alpha}
\dots F_ndy_{q,\alpha})$$
which we assumed to be zero. Similarly, we have
$$\begin{aligned}
\eta_S(dd\omega)
{} & = \eta_S(d\log([-1]_S) \cdot d\omega)
= \eta_S(d(d\log([-1]_S) \cdot \omega)) \cr
{} & = \eta_S(d(\sum_{\alpha}
V_n(x_{\alpha}[-1]_{S/n}^{-n})d([-1]_S)dy_{1,\alpha} \dots dy_{q,\alpha})), \cr
\end{aligned}$$
and by the definition of $N_S^{\boldsymbol{\cdot}}$, this element is zero, since
$$\begin{aligned}
{} & \eta_{S/n}(\sum_{\alpha}
x_{\alpha}[-1]_{S/n}^{-n}F_nd([-1]_S)F_ndy_{1,\alpha} \dots
F_ndy_{q,\alpha}) \cr
{} &= d\log\eta_{S/n}([-1]_{S/n}) \cdot \eta_{S/n}(\sum_{\alpha} x_{\alpha}F_ndy_{1,\alpha}
\dots F_ndy_{q,\alpha}) \cr
\end{aligned}$$
is zero. Next, to prove that $\eta_{S/m}F_m(\omega)$ and
$\eta_{S/m}F_m(d\omega)$ are zero, we may assume that $m = p$ is a
prime number. Indeed, if $m = kp$, then, by the inductive hypothesis,
we have $\eta_{S/m}F_m = \eta_{S/m}F_kF_p = F_k\eta_{S/p}F_p$. Suppose 
first that $n = lp$ is divisible by $p$. We remark that we have
\begin{equation}\label{term1}
\begin{aligned}
{} & \eta_{S/p}(\sum_{\alpha} V_l(x_{\alpha})F_pdy_{1,\alpha}\dots
F_pdy_{q,\alpha}) \cr
{} & = \sum_{\alpha} V_l(\eta_{S/n}(x_{\alpha}))\eta_{S/p}(F_pdy_{1,\alpha}\dots
F_pdy_{q,\alpha}) \cr
{} & = \sum_{\alpha} V_l(\eta_{S/n}(x_{\alpha})F_l(\eta_{S/p}(F_pdy_{1,\alpha}\dots
F_pdy_{q,\alpha}))) \cr
{} & = V_l\eta_{S/n}(\sum_{\alpha} x_{\alpha}F_ndy_{1,\alpha}\dots
F_ndy_{q,\alpha}) \cr
\end{aligned}
\end{equation}
which is zero. Indeed, the three equalities hold since, by the
inductive hypothesis, the maps $V_l \colon
\mathbb{W}_{S/n}\Omega_A^q \to \mathbb{W}_{S/p}\Omega_A^q$ and $F_l
\colon \mathbb{W}_{S/p}\Omega_A^q \to \mathbb{W}_{S/n}\Omega_A^q$
exist and have the properties listed at the beginning of the
proof. Now,
$$\begin{aligned}
\eta_{S/p}F_p(\omega)
{} & = \eta_{S/p}(\sum_{\alpha}F_pV_n(x_{\alpha})F_pdy_{1,\alpha} \dots
F_pdy_{q,\alpha}) \cr
{} & = p\eta_{S/p}(\sum_{\alpha} V_l(x_{\alpha})F_pdy_{1,\alpha}\dots
F_pdy_{q,\alpha}) \cr
\end{aligned}$$
which is zero by~(\ref{term1}). Similarly, using
Proposition~\ref{uniquederivation}, we have
$$\eta_{S/p}F_p(d\omega)
= \eta_{S/p}F_p(\sum_{\alpha}dV_n(x_{\alpha})dy_{1,\alpha} \dots
dy_{q,\alpha}) 
+ \epsilon \cdot \eta_{S/p}F_p(\omega)$$
with $\epsilon = qd\log\eta_{S/p}([-1]_{S/p})$, and we have just
proved that $\eta_{S/p}F_p(\omega)$ is zero. By
Lemma~\ref{F_ndV_n(a)}, we may rewrite the first summand as the sum
$$\eta_{S/p}(\sum_{\alpha} dV_l(x_{\alpha})F_pdy_{1,\alpha} \dots
F_pdy_{q,\alpha}) + \epsilon \cdot
\eta_{S/p}(\sum_{\alpha} V_l(x_{\alpha})F_pdy_{1,\alpha} \dots
F_pdy_{q,\alpha})$$
with $\epsilon =  (p-1) d\log\eta_{S/p}([-1]_{S/p})$. Here, the second
term is zero by~(\ref{term1}), and we rewrite the first summand as
$$\begin{aligned}
{} & \eta_{S/p}(d(\sum_{\alpha} V_l(x_{\alpha}) F_pdy_{1,\alpha} \dots
F_pdy_{q,\alpha}) 
- \sum_{\alpha}
V_l(x_{\alpha})d(F_pdy_{1,\alpha} \dots F_pdy_{q,\alpha})) \cr
{} & = \eta_{S/p}d(\sum_{\alpha} V_l(x_{\alpha}) F_pdy_{1,\alpha} \dots
F_pdy_{q,\alpha}) - \epsilon \cdot
\eta_{S/p}(\sum_{\alpha}
V_l(x_{\alpha})F_pdy_{1,\alpha} \dots F_pdy_{q,\alpha}) \cr
{} & = d\eta_{S/p}(\sum_{\alpha} V_l(x_{\alpha}) F_pdy_{1,\alpha} \dots
F_pdy_{q,\alpha}) - \epsilon \cdot
\eta_{S/p}(\sum_{\alpha}
V_l(x_{\alpha})F_pdy_{1,\alpha} \dots F_pdy_{q,\alpha}) \cr
\end{aligned}$$
with $\epsilon = pqd\log\eta_{S/p}([-1]_{S/p})$. Here, the last
equality uses that $\eta_{S/p}$, by definition, commutes with
$d$. It follows from~(\ref{term1}) that both summands in the last line
vanish, so $\eta_{S/p}F_p(d\omega) = 0$ as desired. Next, suppose that
$p$ does not divide $n$. We have
$$\begin{aligned}
\eta_{S/p}F_p(\omega)
{} & = \sum_{\alpha} \eta_{S/p}V_nF_p(x_{\alpha}) \cdot
\eta_{S/p}F_p(dy_{1,\alpha} \dots dy_{q,\alpha}) \cr
{} & = \sum_{\alpha} V_nF_p\eta_{S/n}(x_{\alpha}) \cdot
\eta_{S/p}F_p(dy_{1,\alpha} \dots dy_{q,\alpha}) \cr
{} & = \sum_{\alpha} V_n(F_p\eta_{S/n}(x_{\alpha}) \cdot
F_n\eta_{S/p}F_p(dy_{1,\alpha} \dots dy_{q,\alpha})) \cr 
{} & = \sum_{\alpha} V_n(F_p\eta_{S/n}(x_{\alpha}) \cdot
\eta_{S/np}F_{np}(dy_{1,\alpha} \dots dy_{q,\alpha})) \cr 
{} & = \sum_{\alpha} V_n(F_p\eta_{S/n}(x_{\alpha}) \cdot
F_p\eta_{S/n}F_n(dy_{1,\alpha} \dots dy_{q,\alpha})) \cr
{} & = V_nF_p\eta_{S/n}(\sum_{\alpha} x_{\alpha}F_ndy_{1,\alpha} \dots
F_ndy_{q,\alpha}) \cr
\end{aligned}$$
which is zero. Here the second, third, and forth equalities use that,
by the inductive hypothesis, the maps $F_n \colon
\mathbb{W}_{S/p}\Omega_A^q \to \mathbb{W}_{S/np}\Omega_A^q$ and $V_n
\colon \mathbb{W}_{S/np}\Omega_A^q \to \mathbb{W}_{S/p}\Omega_A^q$
exist and have the properties listed at the beginning of the proof,
and similarly, the fifth equality uses that the map $F_p \colon
\mathbb{W}_{S/n}\Omega_A^q \to \mathbb{W}_{S/np}\Omega_A^q$ with
$F_p\eta_{S/n} = \eta_{S/np}F_p$ exists. We proceed to show that also
$\eta_{S/p}F_p(d\omega)$ vanishes, and to this end, it suffices to
show that both $p\eta_{S/p}F_P(d\omega)$ and $n\eta_{S/p}F_p(d\omega)$
vanish. First,
$$p\eta_{S/p}F_pd(\omega) = \eta_{S/p}dF_p(\omega) =
d\eta_{S/p}F_p(\omega),$$
which is zero by what was just proved. Here the two equalities hold
by Proposition~\ref{omegafrobenius} and by the definition of
$\eta_{S/p}$, respectively. Next,
$$\begin{aligned}
n\eta_{S/p}F_pd(\omega) 
{} & = \sum_{\alpha}
n\eta_{S/p}F_pd(V_n(x_{\alpha})dy_{1,\alpha} \dots dy_{q,\alpha}) \cr
{} & = \sum_{\alpha} n\eta_{S/p}F_p(dV_n(x_{\alpha})dy_{1,\alpha}
\dots dy_{q,\alpha}) + \epsilon \cdot \eta_{S/p}F_p(\omega) \cr
\end{aligned}$$
with $\epsilon = nqd\log\eta_{S/p}([-1])_{S/p}$, and we have already
proved that $\eta_{S/p}F_p(\omega)$ is zero. Moreover, we may rewrite
the first term in the lower line as
$$\begin{aligned}
{} & \sum_{\alpha}\eta_{S/p}F_p(dV_n([1]_{S/n})V_n(x_{\alpha})dy_{1,\alpha}
\dots dy_{q,\alpha}) \cr
{} &  + \sum_{\alpha}\eta_{S/p}F_p(V_n([1]_{S/n})dV_n(x_{\alpha})dy_{1,\alpha}
\dots dy_{q,\alpha}), \cr
\end{aligned}$$
since, by Lemma~\ref{variousrelations} and by $d$ being a derivation,
$$\begin{aligned}
ndV_n(x) 
{} & = dV_nF_nV_n(x) = d(V_n([1]_{S/n}) \cdot V_n(x)) \cr
{} & = dV_n([1]_{S/n}) \cdot V_n(x) + V_n([1]_{S/n}) \cdot dV_n(x). \cr
\end{aligned}$$
Now, since both $\eta_{S/p}$ and $F_p$ are graded ring homomorphisms, we
have
$$\sum_{\alpha}
\eta_{S/p}F_p(dV_n([1]_{S/n})V_n(x_{\alpha})dy_{1,\alpha} \dots
dy_{q,\alpha})
= \eta_{S/p}F_pdV_n([1]_{S/n}) \cdot \eta_{S/p}F_p(\omega)$$
which is zero, since $\eta_{S/p}F_p(\omega)$ is zero, and
$$\begin{aligned}
{} & \sum_{\alpha}
\eta_{S/p}F_p(V_n([1]_{S/n})dV_n(x_{\alpha})dy_{1,\alpha} \dots
dy_{q,\alpha}) \cr
{} & = \sum_{\alpha} \eta_{S/p}F_pV_n([1]_{S/n}) \cdot
\eta_{S/p}F_p(dV_n(x_{\alpha})dy_{1,\alpha} \dots dy_{q,\alpha}) \cr
{} & = \sum_{\alpha} \eta_{S/p}V_nF_p([1]_{S/n}) \cdot
\eta_{S/p}F_p(dV_n(x_{\alpha})dy_{1,\alpha} \dots dy_{q,\alpha}) \cr
{} & = \sum_{\alpha} V_n\eta_{S/np}F_p([1]_{S/n}) \cdot
\eta_{S/p}F_p(dV_n(x_{\alpha})dy_{1,\alpha} \dots dy_{q,\alpha}) \cr
{} & = \sum_{\alpha} V_n(\eta_{S/np}F_p([1]_{S/n}) \cdot
F_n\eta_{S/p}F_p(dV_n(x_{\alpha})dy_{1,\alpha} \dots dy_{q,\alpha})) \cr
{} & = \sum_{\alpha} V_n(\eta_{S/np}F_p([1]_{S/n}) \cdot
\eta_{S/np}F_{np}(dV_n(x_{\alpha})dy_{1,\alpha} \dots dy_{q,\alpha})) \cr
{} & = \sum_{\alpha} V_n(\eta_{S/np}F_p([1]_{S/n}) \cdot
F_p\eta_{S/n}F_n(dV_n(x_{\alpha})dy_{1,\alpha} \dots dy_{q,\alpha})), \cr
\end{aligned}$$
where the third and forth equalities hold, since, by the inductive
hypothesis, both the maps $F_n \colon \mathbb{W}_{S/p}\Omega_A^q \to
\mathbb{W}_{S/np}\Omega_A^q$ and $V_n \colon
\mathbb{W}_{S/np}\Omega_A^q \to \mathbb{W}_{S/p}\Omega_A^q$ exist and
have the properties listed at the beginning of the proof, and where
the fifth and sixth equalities hold, since the maps $F_p
\colon \mathbb{W}_{S/n}\Omega_A^q \to \mathbb{W}_{S/np}\Omega_A^q$ and
$V_p \colon \mathbb{W}_{S/np}\Omega_A^q \to
\mathbb{W}_{S/n}\Omega_A^q$ exist and have the properties listed at
the beginning of the proof. Since $\eta_{S/np}F_p([1]_{S/n})$ is the
identity and $F_ndV_n(x_{\alpha}) = dx_{\alpha} +
(n-1)d\log([-1]_S) \cdot x_{\alpha}$, this becomes
$$V_nF_p\eta_{S/n}(\sum_{\alpha} d(x_{\alpha})F_ndy_{1,\alpha} \dots
F_ndy_{q,\alpha}) + \epsilon \cdot V_nF_p\eta_{S/n}(\sum_{\alpha}
x_{\alpha}F_ndy_{1,\alpha} \dots F_ndy_{q,\alpha})$$
with $\epsilon = (n-1)d\log\eta_{S/p}([-1]_{S/p})$, where the second
term is zero. Finally, since $d$ is a derivation, we may rewrite the
first term as
$$V_nF_p\eta_{S/n}d(\sum_{\alpha} x_{\alpha}F_ndy_{1,\alpha} \dots
F_ndy_{q,\alpha}) - \epsilon \cdot V_nF_p\eta_{S/n}(\sum_{\alpha}
x_{\alpha}F_ndy_{1,\alpha} \dots F_ndy_{q,\alpha})$$
with $\epsilon = nqd\log\eta_{S/p}([-1]_{S/p})$, and these terms both are
zero. Hence, $nF_pd(\omega)$ is zero, and therefore, we conclude
that $F_pd(\omega)$ is zero as desired. 

In order to complete the recursive definition of the maps $\eta_S$,
$R_T^S$, $F_n$, $d$, and $V_n$, we must show that the three diagrams
at the beginning of the proof commute. The top left-hand diagram
commutes by the definition of $V_n$, and the calculation
$$\begin{aligned}
{} & R_T^SV_n\eta_{S/n}(xF_ndy_1 \dots F_ndy_q)
= R_T^S\eta_S(V_n(x)dy_1 \dots dy_q) \cr
{} & = \eta_TR_T^S(V_n(x)dy_1 \dots dy_q)
= \eta_T(V_nR_{T/n}^{S/n}(x)dR_T^S(y_1) \dots dR_T^S(y_q)) \cr
{} & = V_n\eta_{T/n}(R_{T/n}^{S/n}(x)F_ndR_T^S(y_1) \dots
F_ndR_T^S(y_q))
= V_nR_{T/n}^{S/n}\eta_{S/n}(xF_ndy_1 \dots F_ndy_q) \cr
\end{aligned}$$
shows that the top right-hand diagram commutes. Finally, the following
calculation shows that the bottom diagram commutes,
$$\begin{aligned}
{} & V_n\eta_{S/n}(xF_ndy_1 \dots F_ndy_q) \cdot \eta_S(zdw_1 \dots
dw_r) \cr
{} & = \eta_S(V_n(x)dy_1 \dots dy_q) \cdot \eta_S(zdw_1 \dots dw_r) \cr
{} & = \eta_S(V_n(x)dy_1 \dots dy_q \cdot zdw_1 \dots dw_r) \cr
{} & = \eta_S(V_n(x F_n(z))dy_1 \dots dy_qdw_1 \dots dw_r) \cr
{} & = V_n\eta_{S/n}(xF_n(z)F_ndy_1 \dots F_ndy_qF_ndw_1
\dots F_ndw_r) \cr
{} & = V_n(\eta_{S/n}(xF_ndy_1 \dots F_ndy_q) \cdot \eta_{S/n}F_n(zdw_1
\dots dw_r)) \cr
{} & = V_n(\eta_{S/n}(xF_ndy_1 \dots F_ndy_q) \cdot F_n\eta_S(zdw_1
\dots dw_r)). \cr
\end{aligned}$$
Here the first and fourth equalities hold by the definition of the
map $V_n$; the second and fifth equalties hold by the multiplicativity
of the maps $\eta_S$, $\eta_{S/n}$, and $F_n$; the third equality
holds by Lemma~\ref{variousrelations}; and the sixth equality holds
by the existence of the map $F_n$ with $F_n\eta_S =
\eta_{S/n}F_n$. This completes the recursive definition of the graded rings
$\mathbb{W}_S\Omega_A^{\boldsymbol{\cdot}}$ and the maps $\eta_S$,
$R_T^S$, $F_n$, $d$, and $V_n$ for finite truncation sets $S$. We
extend to infinite truncation sets as discussed in
Remark~\ref{dlog[-1]}~(b). 

To show that the structure defined above forms a Witt complex over
$A$, it remains to prove that $V_1 = \id$; $V_nV_m = V_{mn}$; $F_nV_n
= n \cdot \id$; and $F_mV_n = V_nF_m$, if $(m,n) = 1$. The first
identity holds by definition, and the second identity holds, since
$$\begin{aligned}
{} & V_{mn}\eta_{S/mn}(xF_{mn}dy_1 \dots F_{mn}dy_q)
= \eta_S(V_{mn}(x)dy_1 \dots dy_q) \cr
{} & = \eta_S(V_m(V_n(x))dy_1 \dots dy_q) 
= V_m\eta_{S/m}(V_n(x)F_mdy_1 \dots F_mdy_q) \cr
{} & = V_m(V_n(\eta_{S/mn}(x))F_md\eta_S(y_1) \dots
F_md\eta_S(y_q)) \cr
{} & = V_m(V_n(\eta_{S/mn}(x)F_{mn}d\eta_S(y_1) \dots
F_{mn}d\eta_S(y_q))) \cr
{} & = V_m(V_n\eta_{S/mn}(xF_{mn}dy_1 \dots F_{mn}dy_q)). \cr
\end{aligned}$$
Here the first and third equalities hold by the definition of
$V_{mn}$ and $V_m$, respectively; the fourth equality holds by the
existence of the map $F_m$ with $\eta_{S/m}F_m = F_m \eta_S$; the
fifth equality holds by the inductive hypothesis; and the last
equality holds by the existence of the map $F_{mn}$ with
$\eta_{S/mn}F_{mn} = F_{mn}\eta_S$. Similarly, we have
$$\begin{aligned}
{} & F_nV_n\eta_{S/n}(xF_ndy_1 \dots F_ndy_q)
= F_n\eta_S(V_n(x)dy_1 \dots dy_q) \cr
{} & = \eta_{S/n}F_n(V_n(x) dy_1 \dots dy_q)
= n\eta_{S/n}(xF_ndy_1 \dots F_ndy_q), \cr
\end{aligned}$$
which shows that $F_nV_n = n \cdot \id$; and finally, if $(m,n) = 1$,
we have
$$\begin{aligned}
{} & F_mV_n\eta_{S/n}(x \cdot F_ndy_1 \dots F_ndy_q)
= F_m\eta_S(V_n(x) \cdot dy_1 \dots dy_q) \cr
{} & = \eta_{S/m}F_m(V_n(x) \cdot dy_1 \dots dy_q) 
= \eta_{S/m}F_m(V_n(x)) \cdot \eta_{S/m}F_m(dy_1 \dots dy_q) \cr
{} & = \eta_{S/m}V_n(F_m(x)) \cdot \eta_{S/m}F_m(dy_1 \dots dy_q) 
= V_n(\eta_{S/mn}F_m(x)) \cdot \eta_{S/m}F_m(dy_1 \dots dy_q) \cr
{} & = V_n(\eta_{S/mn}F_m(x) \cdot F_n\eta_{S/m}F_m(dy_1 \dots dy_q))
= V_n\eta_{S/mn}F_m(x \cdot F_n(dy_1 \dots dy_q)) \cr
{} & = V_nF_m\eta_{S/n}(x \cdot F_n(dy_1 \dots dy_q))
= V_nF_m\eta_{S/n}(x \cdot F_ndy_1 \dots F_ndy_q), \cr
\end{aligned}$$
which shows that $F_mV_n = V_nF_m$ as desired.

Finally, let $E_S^{\boldsymbol{\cdot}}$ be a Witt complex over $A$ and
let $\eta_S^E \colon
\check{\Omega}_{\mathbb{W}_S(A)}^{\boldsymbol{\cdot}} \to
E_S^{\boldsymbol{\cdot}}$ be the map in
Corollary~\ref{extendedeta}. We claim that this map factors as
$$\xymatrix{
{ \check{\Omega}_{\mathbb{W}_S(A)}^{\boldsymbol{\cdot}} }
\ar[r]^-{\eta_S} &
{ \mathbb{W}_S\Omega_A^{\boldsymbol{\cdot}} } \ar[r]^-{f_S} &
{ E_S^{\boldsymbol{\cdot}}. } \cr
}$$
Since the left-hand map $\eta_S$ is surjective, the right-hand map
$f_S$ necessarily is unique. We may further assume that the truncation
set $S$ is finite. To prove the claim, we proceed by induction on the
cardinality of $S$, the case $S = \emptyset$ being trivial as
$\eta_{\emptyset}$ is a bijection. So we let $S$ be a finite non-empty
truncation set and assume that for every proper sub-truncation set $T
\subset S$, the factorization $\eta_T^E = f_T\eta_T$ exists. To prove
that also the factorization $\eta_S^E = f_S\eta_S$ exists, we must show
that whenever $n$ is a positive integer and $x_{\alpha} \in
\mathbb{W}_{S/n}(A)$ and $y_{1,\alpha}, \dots, y_{q,\alpha} \in
\mathbb{W}_S(A)$ are Witt vectors such that
$$\eta_{S/n}(\sum_{\alpha}x_{\alpha}F_ndy_{1,\alpha} \dots
F_ndy_{q,\alpha}) \in \mathbb{W}_{S/n}\Omega_A^q$$
vanishes, then so does
$$\eta_S^E(\sum_{\alpha}V_n(x_{\alpha})dy_{1,\alpha} \dots
dy_{q,\alpha}) \in E_S^q.$$
Now, using that $E_S^{\boldsymbol{\cdot}}$ is a Witt complex over $A$,
we find
$$\begin{aligned}
{} & \eta_S^E(\sum_{\alpha}V_n(x_{\alpha}) \cdot dy_{1,\alpha} \dots
dy_{q,\alpha}) 
= \sum_{\alpha}\eta_S^E(V_n(x_{\alpha})) \cdot \eta_S^E(dy_{1,\alpha}
\dots dy_{q,\alpha}) \cr
{} & =  \sum_{\alpha} V_n(\eta_{S/n}^E(x_{\alpha})) \cdot \eta_S^E(dy_{1,\alpha}
\dots dy_{q,\alpha}) 
= \sum_{\alpha} V_n(\eta_{S/n}^E(x_{\alpha}) \cdot
F_n\eta_S^E(dy_{1,\alpha} \dots dy_{q,\alpha})) \cr
{} & = \sum_{\alpha} V_n(\eta_{S/n}^E(x_{\alpha}) \cdot
\eta_{S/n}^EF_n(dy_{1,\alpha} \dots dy_{q,\alpha})) 
= V_n\eta_{S/n}^E(\sum_{\alpha} x_{\alpha}F_ndy_{1,\alpha} \dots
F_ndy_{q,\alpha}) \cr
{} & = V_nf_{S/n}\eta_{S/n}(\sum_{\alpha} x_{\alpha}F_ndy_{1,\alpha}
\dots F_ndy_{q,\alpha}), \cr
\end{aligned}$$
which vanishes as required. Here the last equality holds by the
inductive hypothesis. This shows that, for every truncation set $S$,
the map $\eta_S^E$ factors as $f_S\eta_S$. Finally, to show that
$\mathbb{W}_S\Omega_A^{\boldsymbol{\cdot}}$ is initial among Witt
complexes over $A$, it remains to verify that the maps $f_S$
constitute a map of Witt complexes. In view of
Corollary~\ref{extendedeta}, the only statement that needs proof is
that for every truncation set $S$ and for every positive integer $n$,
we have $f_SV_n = V_nf_{S/n}$, and this follows from the calculation
$$\begin{aligned}
{} & f_SV_n\eta_{S/n}(x \cdot F_ndy_1 \dots F_ndy_q)
= f_S\eta_S(V_n(x) \cdot dy_1 \dots dy_q) \cr
{} & = \eta_S^E(V_n(x) \cdot dy_1 \dots dy_q)
= \eta_S^E(V_n(x)) \cdot \eta_S^E(dy_1 \dots dy_q) \cr
{} & = V_n(\eta_{S/n}^E(x)) \cdot \eta_S^E(dy_1 \dots dy_q) 
= V_n(\eta_{S/n}^E(x) \cdot F_n\eta_S^E(dy_1 \dots dy_q)) \cr
{} & = V_n(\eta_{S/n}^E(x) \cdot \eta_{S/n}^EF_n(dy_1 \dots dy_q))
= V_n\eta_{S/n}^E(x \cdot F_ndy_1 \dots F_ndy_q) \cr
{} & = V_nf_{S/n}\eta_{S/n}(x \cdot F_ndy_1 \dots F_ndy_q), \cr
\end{aligned}$$
since every element in $\mathbb{W}_S\Omega_A^q$ can be written as a
convergent sum of elements of the form $\eta_{S/n}(x \cdot F_ndy_1
\dots F_ndy_q)$ with $n \in \N$, and $x \in \mathbb{W}_S(A)$, and
$y_1, \dots, y_q \in \mathbb{W}_{S/n}(A)$. This completes the proof of
Theorem~\ref{bigdrwexists}. 
\end{proof}

\begin{definition}\label{derhamwittcomplex}The initial Witt complex
$\smash{ \mathbb{W}_S\Omega_A^{\boldsymbol{\cdot}} }$ over the ring
$A$ is called the big de~Rham-Witt complex of $A$.
\end{definition}

\begin{addendum}\label{whydrw}
\begin{enumerate}
\item[{\rm(i)}] For all $q$, the map $\eta_{\{1\}} \colon \Omega_A^q
  \to \mathbb{W}_{\{1\}}\Omega_A^q$ is an isomorphism.
\item[{\rm(ii)}] For all $S$, the map $\eta_S \colon \mathbb{W}_S(A) \to
\mathbb{W}_S\Omega_A^0$ is an isomorphism. 
\end{enumerate}
\end{addendum}

\begin{proof}This follows immediately from the proof of
Theorem~\ref{bigdrwexists} and Lemma~\ref{S=1}.
\end{proof}

The statement~(i) in Addendum~\ref{whydrw} is a special case of the
question raised at the top of page~133 in~\cite{hm2}. The explicit
construction of the big de~Rham-Witt complex given in the proof of
Theorem~\ref{bigdrwexists} answers this question in the affirmative.

\begin{remark}\label{relativedrw}Suppose that $(k,\lambda)$ is a
$\lambda$-ring and that $f \colon k \to A$ is a $k$-algebra. We let
$f_S \colon k \to \mathbb{W}_S(A)$ be the composite ring homomorphism
$R_S^{\N} \circ \mathbb{W}(f) \circ \lambda$ and define the big
de~Rham-Witt complex of $A$ relative to $(k,\lambda)$ to be the quotient
$$\mathbb{W}_S\Omega_{A/(k,\lambda)}^{\boldsymbol{\cdot}} =
\mathbb{W}_S\Omega_A^{\boldsymbol{\cdot}}/R_S^{\boldsymbol{\cdot}}$$
of the big de~Rham-Witt complex $\mathbb{W}_S\Omega_A^{\boldsymbol{\cdot}}$ by the
graded ideal $R_S^{\boldsymbol{\cdot}}$ generated by the images of
$\eta_S \circ f_{S*} \colon \Omega_k^1 \to \mathbb{W}_S\Omega_A^1$ and
$d \circ \eta_S \circ f_{S*} \colon \Omega_k^1 \to
\mathbb{W}_S\Omega_A^2$. It is initial among Witt complexes over $A$
in which the map $d$ is $k$-linear when its domain and target are
viewed as $k$-modules via the map $\eta_Sf_S \colon k \to E_S^0$. In
the particular case, where $(k,\lambda)$ is $(\mathbb{W}(R),\Delta_R)$
and where $A$ is an $R$-algebra viewed as a $k$-algebra via
$\epsilon_R \colon k \to R$, we obtain (a big version of) the
Langer-Zink relative de~Rham-Witt complex~\cite{langerzink}.
\end{remark}

\section{\'{E}tale morphisms}\label{etalesection}

The functor that to the ring $A$ associates the
$\mathbb{W}_S(A)$-module $\mathbb{W}_S\Omega_A^q$ defines a presheaf
of $\mathbb{W}_S(\mathscr{O})$-modules on the category of affine
schemes. In this section, we use the theorem of Borger~\cite{borger} and van der
Kallen~\cite{vanderkallen1} which we recalled as
Theorem~\ref{borgervanderkallentheorem} to show that for $S$ finite,
this presheaf is a quasi-coherent sheaf of
$\mathbb{W}_S(\mathscr{O})$-modules for the \'{e}tale topology. This
is the statement of Theorem~\ref{etaledescent} which we now prove.

\begin{proof}[Proof of Theorem~\ref{etaledescent}]We fix an \'{e}tale
morphism $f \colon A \to B$ and consider the map
$$\xymatrix{
{ \mathbb{W}_S(B)
  \otimes_{\mathbb{W}_S(A)}\mathbb{W}_S\Omega_A^{\boldsymbol{\cdot}} }
  \ar[r]^-{\alpha} &
{ \mathbb{W}_S\Omega_B^{\boldsymbol{\cdot}} } \cr
}$$
that to $b \otimes \omega$ assigns $b \cdot f_*(\omega)$. To show that
this map is an isomorphism, we define a structure of Witt complex over
$B$ on the domain $E_S^{\boldsymbol{\cdot}}$ of $\alpha$. By
Theorem~\ref{borgervanderkallentheorem}, the map
$$\mathbb{W}_S(f) \colon \mathbb{W}_S(A) \to \mathbb{W}_S(B)$$
is \'{e}tale. Hence, the graded derivation $\smash{ d \colon
\mathbb{W}_s\Omega_A^q \to \mathbb{W}_S\Omega_A^{q+1} }$ extends
uniquely to a graded derivation $d^E \colon E_S^q \to E_S^{q+1}$
defined by
$$d^E(b \otimes x) = (d'b)x + b \otimes dx$$
with $d'b$ the image of $b$ by the composition
$$\xymatrix{
{ \mathbb{W}_S(B) } \ar[r]^-{d} &
{ \Omega_{\mathbb{W}_S(B)}^1 } &
{ \mathbb{W}_S^{\phantom{U}}(B) \otimes_{\mathbb{W}_S(A)}
\Omega_{\mathbb{W}_S(A)}^1 } \ar[l]_-{\sim} \ar[r]^-(.48){\id \otimes
\eta_S} &
{ \mathbb{W}_S(B) \otimes_{\mathbb{W}_S(A)} \mathbb{W}_S\Omega_A^1, } \cr
}$$
where the middle map is the canonical isomorphism. We further define
the maps $\smash{ R_T^{E,S} \colon E_S^q \to E_T^q }$ and $\smash{
  F_n^E \colon E_S^q \to E_{S/n}^q }$ to be $\smash{ R_T^S \otimes
  R_T^S }$ and $\smash{ F_n^E = F_n \otimes F_n }$,
respectively. Next, to define the map $\smash{ V_n^E 
  \colon E_{S/n}^q   \to E_S^q }$, we use that, since the square in
the statement of  Theorem~\ref{borgervanderkallentheorem} is
cocartesian, the map
$$\xymatrix{
{ \mathbb{W}_S(B) \otimes_{\mathbb{W}_S(A)} \mathbb{W}_{S/n}\Omega_A^q
} \ar[r]^-{F_n \otimes \id} &
{ \mathbb{W}_{S/n}(B) \otimes_{\mathbb{W}_{S/n}(A)}
\mathbb{W}_{S/n}\Omega_A^q } \cr
}$$
is an isomorphism, and we then define $V_n^E$ to be the composition of
the inverse of this isomorphism and the map
$$\xymatrix{
{ \mathbb{W}_S(B) \otimes_{\mathbb{W}_S(A)} \mathbb{W}_{S/n}\Omega_A^q
} \ar[r]^-{\id \otimes V_n} &
{ \mathbb{W}_S(B) \otimes_{\mathbb{W}_{S}(A)}
  \mathbb{W}_{S}\Omega_A^q. } \cr
}$$
Finally, we define the map $\eta_S^E \colon \mathbb{W}_S(B) \to E_B^0$
to be the composition 
$$\xymatrix{
{ \mathbb{W}_S(B) } \ar[r] &
{ \mathbb{W}_S(B) \otimes_{\mathbb{W}_S(A)} \mathbb{W}_S(A) } \ar[r] &
{ \mathbb{W}_S(B) \otimes_{\mathbb{W}_S(A)} \mathbb{W}_S\Omega_A^0 }
\cr
}$$
of the canonical isomorphism and the map $\id \otimes \eta_S$. We
proceed to show that the maps defined above make
$E_S^{\boldsymbol{\cdot}}$ a Witt complex over $B$.  The
axioms~(i)--(iii) of Definition~\ref{bigwittcomplex} are readily
verified. For example, we have $d^Ed^E(-) =
d\log\eta_S^E([-1]_S)d^E(-)$, since both sides are derivations which
agree on $\mathbb{W}_S\Omega_A^q$; and the calculation
$$\begin{aligned}
{} & V_n^E(F_n(b) \otimes \omega) \cdot b' \otimes \omega' = b \otimes
V_n(\omega) \cdot b' \otimes \omega'  = bb' \otimes V_n(\omega)\omega' \cr
{} & = bb' \otimes V_n(\omega F_n(\omega')) = V_n^E(F_n(bb') \otimes
\omega F_n(\omega')) = V_n^E(F_n(b) \otimes \omega \cdot F_n^E(b' \otimes
\omega')) \cr
\end{aligned}$$
verifies axiom~(iii), since every element of $\smash{ E_{S/n}^q }$ can be
written as a sum of elements of the form $F_n(b) \otimes \omega$ with
$b \in \mathbb{W}_S(B)$ and $\omega \in \mathbb{W}_{S/n}\Omega_A^q$.

It remains to verify axioms~(iv)--(v) of
Definition~\ref{bigwittcomplex}. To prove axiom~(iv), we must show
that for all $\omega \in E_{S/n}^q$, the equality
$$F_n^Ed^EV_n^E(\omega) + (n-1)d\log\eta_{S/n}^E([-1]_{S/n}) \cdot
\omega = d^E(\omega)$$
holds in $\smash{ E_{S/n}^{q+1} }$. On the right-hand side, $d^E$, by
definition, is the unique graded derivation on $\smash{
  E_{S/n}^{\boldsymbol{\cdot}} }$ that extends the graded derivation
$d$ on $\smash{ \mathbb{W}_{S/n}\Omega_A^{\boldsymbol{\cdot}} }$; and,
on the left-hand side, $D$ also extends $d$. Hence, it will suffice to
show that $D$, too, is a graded derivation. Moreover, since the square
diagram of rings in Theorem~\ref{borgervanderkallentheorem} is cocartesian,
and since $D$ is an additive function, it suffices to show that $D$ is
a graded derivation on elements of the form $\omega =
F_n(b) \otimes \tau$ with $b \in \mathbb{W}_S(B)$ and $\tau \in
\mathbb{W}_{S/n}\Omega_A^{\boldsymbol{\cdot}}$. We claim that
$$D(F_n(b) \otimes \tau) = F_n^E(d^Eb) \cdot n\tau + F_n(b) \otimes
d\tau$$
as elements of $\smash{ E_{S/n}^{\boldsymbol{\cdot}} }$. Granting
this, it follows that axiom~(iv) holds, as the right-hand side clearly
is a graded derivation of $F_n(b) \otimes \tau$. Now,
$$\begin{aligned}
{} & D(F_n(b) \otimes \tau) = F_n^Ed^EV_n^E(F_n(b) \otimes \tau) +
(n-1)d\log\eta_{S/n}^E([-1]_{S/n}) \cdot F_n(b) \otimes \tau \cr
{} & = F_n^Ed^E(b \otimes V_n(\tau)) + F_n(b) \otimes
(n-1)d\log\eta_{S/n}([-1])\tau \cr 
{} & = F_n^E(d^E(b) \cdot V_n(\tau) + b \otimes dV_n(\tau)) + F_n(b) \otimes
(n-1)d\log\eta_{S/n}([-1])\tau \cr
{} & = F_n^E(d^Eb) \cdot n\tau + F_n(b) \otimes (F_ndV_n(\tau) +
(n-1)d\log\eta_{S/n}([-1])\tau) \cr
{} & = F_n^E(d^Eb) \cdot n\tau + F_n(b) \otimes d\tau, \cr
\end{aligned}$$
which proves the claim. Here the first two equalities follow from the
definitions; the third equality holds, since $d^E$ is a derivation;
the fourth equality holds, since $F_n^E$ is a ring homomorphism and
satisfies $F_n^EV_n^E = n \id$; and the last equality holds, since
axiom~(iv) holds in the de~Rham-Witt complex over $A$. 

In order to prove axiom~(v), we consider the following diagram, where
the left-hand horizontal maps are the canonical isomorphisms,
$$\begin{xy}
(-38,7)*+{ \Omega_{\mathbb{W}_S(B)}^1 }="11";
(0,6.5)*+{ \mathbb{W}_S(B) \otimes_{\mathbb{W}_S(A)} \Omega_{\mathbb{W}_S(A)}^1
};
 (0,7)*+{ \phantom{\mathbb{W}_S(B) \otimes_{\mathbb{W}_S(A)}
     \Omega_{\mathbb{W}_S(A)}^1} }="12";
 (38,7)*+{ E_S^1 }="13";
(-38,-7)*+{ \Omega_{\mathbb{W}_{S/n}(B)}^1 }="21";
(0,-7.5)*+{ \mathbb{W}_{S/n}(B) \otimes_{\mathbb{W}_{S/n}(A)}
  \Omega_{\mathbb{W}_{S/n}(A)}^1 };
(0,-7)*+{ \phantom{\mathbb{W}_{S/n}(B) \otimes_{\mathbb{W}_{S/n}(A)}
  \Omega_{\mathbb{W}_{S/n}(A)}^1} }="22";
 (38,-7)*+{ E_{S/n}^1. }="23";
{ \ar "11";"12";};
{ \ar^-{\id \otimes \eta_S} "13";"12";};
{ \ar^-{F_n} "21";"11";};
{ \ar^-{F_n \otimes F_n} "22";"12";};
{ \ar^-{F_n^E} "23";"13";};
{ \ar "21";"22";};
{ \ar^-{\id \otimes \eta_{S/n}} "23";"22";};
\end{xy}$$
Here, the left-hand square commutes, since $\smash{ F_n \colon
\Omega_{\mathbb{W}(B)}^1 \to \Omega_{\mathbb{W}(B)}^1 }$ is
$F_n$-linear and a natural transformation; and right-hand square
commutes by Proposition~\ref{frobeniuscompatible}. Hence, also the
outer square commutes and this immediately implies axiom~(v); compare
Remark~\ref{dlog[-1]}~(d).

We have proved that the domains of the canonical map $\alpha$ at the
beginning of the proof form a Witt complex over $B$. Therefore, there
exists a unique map
$$\beta \colon \mathbb{W}_S\Omega_B^q \to \mathbb{W}_S(B) \otimes_{\mathbb{W}_S(A)} \mathbb{W}_S\Omega_A^q$$
of Witt complexes over $B$. The composition $\alpha \circ \beta$
is a selfmap of the initial object $\mathbb{W}_S\Omega_B^q$, and
therefore, is the identity map. The composition $\beta \circ \alpha$
is a map of Witt complexes over $B$. In particular, it is a map of
$\mathbb{W}_S(B)$-modules, and therefore, is determined by the
composition with the map of Witt complexes
$$\iota \colon \mathbb{W}_S\Omega_A^q \to
\mathbb{W}_S(B) \otimes_{\mathbb{W}_S(A)} \mathbb{W}_S\Omega_A^q$$
that takes $x$ to $[1]_S \otimes x$. But $\iota$ and $\beta \circ
\alpha \circ \iota$ both are maps of Witt complexes over $A$ with
domain the initial Witt complex over $A$. Therefore, the two maps
are equal, and hence, also $\beta \circ \alpha$ is the identity
map. This completes the proof.
\end{proof}

\section{The big de~Rham-Witt complex of the ring of integers}\label{drwofZsection}

We finally evaluate the absolute de~Rham-Witt complex of the ring of
integers. If $m$ and $n$ are positive integers, we write $(m,n)$ and
$[m,n]$ for the greatest common divisor and least common multiple of
$m$ and $n$, respectively. We define $(m,n]$ to be the unique
integer modulo $[m,n]$ such that $(m,n] \equiv 0$ modulo $m$ and
$(m,n] \equiv (m,n)$ modulo $n$, and define $\{m,n\}$ to the 
unique integer modulo $2$ that is non-zero if and only if both $m$ and
$n$ are even. We note that $(m,n] + (n,m] \equiv (m,n)$ modulo
$[m,n]$. We also remark that, by Lemma~\ref{wittcomplexrelations} and
by $d$ being a derivation, in any Witt complex, the element
$dV_n\eta_{S/n}([1]_{S/n})$ is annihilated by $n$. 

\begin{theorem}\label{bigdrwofz}The big de~Rham-Witt complex of
$\Z$ is given as follows:
$$\begin{aligned}
\mathbb{W}_S\Omega_{\Z}^0 & = \prod_{n \in S} \Z \cdot
V_n\eta_{S/n}([1]_{S/n}) \cr 
\mathbb{W}_S\Omega_{\Z}^1 & = \prod_{n \in S} \Z/n\Z \cdot
dV_n\eta_{S/n}([1]_{S/n}) \cr
\end{aligned}$$
and the groups in degrees $q \geqslant 2$ are zero. The multiplication is
given by
$$\begin{aligned}
V_m\eta_{S/m}([1]_{S/m}) \cdot V_n\eta_{S/n}([1]_{S/n}) & = (m,n) \cdot V_{[m,n]}\eta_{S/[m,n]}([1]_{S/[m,n]}) \cr
V_m\eta_{S/m}([1]_{S/m}) \cdot dV_n\eta_{S/n}([1]_{S/n}) & = (m,n] \cdot dV_{[m,n]}\eta_{S/[m,n]}([1]_{S/[m,n]})  \cr
{} & \hskip-10mm + \{m,n\} \sum_{r \geqslant 1} 2^{r-1}[m,n] \cdot
dV_{2^r[m,n]}\eta([1]_{S/2^r[m,n]}), \cr
\end{aligned}$$
and the $m$th Frobenius and Verschiebung maps are given by
$$\begin{aligned}
F_mV_n\eta_{S/n}([1]_{S/n}) & = (m,n) \cdot V_{[m,n]/m}\eta_{S/[m,n]}([1]_{S/[m,n]}) \cr
F_mdV_n\eta_{S/n}([1]_{S/n}) & =  (m,n]/m \cdot dV_{[m,n]/m}\eta_{S/[m,n]}([1]_{S/[m,n]})  \cr
{} & \hskip-15mm + \{m,n\} \sum_{r \geqslant 1} (2^{r-1}[m,n]/m) \cdot
dV_{2^r[m,n]/m}\eta_{S/2^r[m,n]}([1]_{S/2^r[m,n]}) \cr
V_m(V_n\eta_{S/mn}([1]_{S/mn})) & = V_{mn}\eta_{S/mn}([1]_{S/mn}) \cr
V_m(dV_n\eta_{S/mn}([1]_{S/mn})) & = m \cdot dV_{mn}\eta_{S/mn}([1]_{S/mn}). \cr
\end{aligned}$$
\end{theorem}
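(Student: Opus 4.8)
The plan is to recognize the explicit data displayed in the statement as a Witt complex $E_S^{\boldsymbol{\cdot}}$ over $\mathbb{Z}$ and to compare it with the de~Rham--Witt complex by playing off two maps against each other, one coming from the universal property of Theorem~\ref{bigdrwexists} and one constructed by hand. Since both $S \mapsto \mathbb{W}_S\Omega_{\mathbb{Z}}^{\boldsymbol{\cdot}}$ and the proposed $S \mapsto E_S^{\boldsymbol{\cdot}}$ take colimits of truncation sets to limits, one immediately reduces to $S$ finite. In degree $0$ there is nothing to do: $\eta_S \colon \mathbb{W}_S(\mathbb{Z}) \to \mathbb{W}_S\Omega_{\mathbb{Z}}^0$ is an isomorphism by Addendum~\ref{whydrw}~(ii), and Proposition~\ref{W_S(Z)} describes $\mathbb{W}_S(\mathbb{Z}) = \prod_{n\in S}\mathbb{Z}\cdot V_n\eta([1]_{S/n})$ together with the product $V_m\eta([1])\cdot V_n\eta([1]) = (m,n)\,V_{[m,n]}\eta([1])$.

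First I would show that $\mathbb{W}_S\Omega_{\mathbb{Z}}^{\boldsymbol{\cdot}}$ is generated, as a graded abelian group, by the monomials in the elements $V_n\eta([1]_{S/n})$ and $dV_n\eta([1]_{S/n})$. Indeed $\eta_S \colon \check{\Omega}_{\mathbb{W}_S(\mathbb{Z})}^{\boldsymbol{\cdot}} \to \mathbb{W}_S\Omega_{\mathbb{Z}}^{\boldsymbol{\cdot}}$ is surjective by Theorem~\ref{bigdrwexists}, the ring $\mathbb{W}_S(\mathbb{Z})$ is generated by the $V_n([1]_{S/n})$ by Proposition~\ref{W_S(Z)}, and $d$ is a derivation, so $\check{\Omega}_{\mathbb{W}_S(\mathbb{Z})}^{\boldsymbol{\cdot}}$ is generated in degrees $0$ and $1$ by these elements. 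I would then derive, purely from the axioms of Definition~\ref{bigwittcomplex} and the relations of Lemma~\ref{wittcomplexrelations}, the displayed formulas for $V_m\eta([1])\cdot dV_n\eta([1])$, for $F_m$, and for $V_m$ on all these generators, using the projection formula, the identity $d\log\eta([-1]_S) = \sum_{r\geqslant 1}2^{r-1}dV_{2^r}\eta([1]_{S/2^r})$, the relation $F_mdV_n = i\,dF_{m/c}V_{n/c} + j\,F_{m/c}V_{n/c}d + (c-1)d\log\eta([-1])\cdot F_{m/c}V_{n/c}$ with $mi+nj=c=(m,n)$, and the elementary facts $d\eta([1]) = 0$ and $n\cdot dV_n\eta([1]_{S/n}) = 0$ (the latter from $V_nd = ndV_n$). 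Reducing an arbitrary monomial by these relations shows that $\mathbb{W}_S\Omega_{\mathbb{Z}}^q = 0$ for $q\geqslant 2$ and that $\mathbb{W}_S\Omega_{\mathbb{Z}}^1$ is a quotient of $\prod_{n\in S}(\mathbb{Z}/n\mathbb{Z})\cdot dV_n\eta([1]_{S/n})$; in particular there is a surjective map of Witt complexes $\pi \colon E_S^{\boldsymbol{\cdot}} \to \mathbb{W}_S\Omega_{\mathbb{Z}}^{\boldsymbol{\cdot}}$ carrying the symbols in $E$ to the like-named elements.

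To see that $\pi$ has no kernel I would verify directly that the explicit data $E_S^{\boldsymbol{\cdot}}$ --- underlying groups $\prod_{n\in S}\mathbb{Z}\cdot V_n\eta([1]_{S/n})$, $\prod_{n\in S}(\mathbb{Z}/n\mathbb{Z})\cdot dV_n\eta([1]_{S/n})$, and $0$ in degrees $\geqslant 2$, together with the stated multiplication, $\eta_S$, $d$, $F_m$, $V_m$ --- satisfies axioms~(i)--(v) of Definition~\ref{bigwittcomplex}. This is a finite check relying on Proposition~\ref{W_S(Z)} and the arithmetic of the symbols $(m,n]$ and $\{m,n\}$ recalled at the start of the section (notably $(m,n] + (n,m] \equiv (m,n)$ modulo $[m,n]$); one confirms in particular that $d$ is well defined by comparing $d(V_m\eta([1])\cdot dV_n\eta([1]))$ computed via the Leibniz rule and $dd\omega = d\log\eta([-1])\cdot d\omega$ with $d$ applied to the right-hand side of the product formula. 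Granting this, Theorem~\ref{bigdrwexists} produces a unique map of Witt complexes $\psi \colon \mathbb{W}_S\Omega_{\mathbb{Z}}^{\boldsymbol{\cdot}} \to E_S^{\boldsymbol{\cdot}}$. Then $\pi\circ\psi$ is a self-map of the initial Witt complex, hence the identity, while $\psi\circ\pi$ is a self-map of $E_S^{\boldsymbol{\cdot}}$ that fixes $\eta_S(\mathbb{W}_S(\mathbb{Z})) = E_S^0$, commutes with $d$, and is $E_S^0$-linear, hence fixes every module generator of $E_S^1$ and is therefore also the identity. Thus $\pi$ and $\psi$ are mutually inverse isomorphisms, which is the assertion.

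The step I expect to be the main obstacle is the verification that $E_S^{\boldsymbol{\cdot}}$ is genuinely a Witt complex --- above all the self-consistency (well-definedness and associativity) of the product on $E_S^1$ with its $2$-primary correction terms, and the compatibility of this product with $d$, $F_m$, $V_m$ and the projection formula, where axiom~(iv), $F_ndV_n(y) = d(y) + (n-1)d\log\eta([-1])\cdot y$, must be checked on $E_S^1$. A close second is tracking the $2$-torsion bookkeeping in the derivation of the formula for $V_m\eta([1])\cdot dV_n\eta([1])$ from the axioms, where the contributions of $d\log\eta([-1])$ have to be reconciled exactly with the definitions of $(m,n]$ and $\{m,n\}$.
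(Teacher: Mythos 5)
Your overall strategy coincides with the paper's: one first checks that the explicit data $E_S^{\boldsymbol{\cdot}}$ displayed in the statement satisfy the axioms of Definition~\ref{bigwittcomplex} (this is indeed where the bulk of the effort lies, exactly as you anticipate), and then one invokes the universal property of Theorem~\ref{bigdrwexists}. The cosmetic difference is that you construct a surjection $\pi\colon E_S^{\boldsymbol{\cdot}} \to \mathbb{W}_S\Omega_{\mathbb{Z}}^{\boldsymbol{\cdot}}$ explicitly and play $\pi$ off against the universal map $\psi$, whereas the paper simply observes that $\psi$ is readily seen to be an isomorphism in degrees $q\leqslant 1$ and then separately kills degree $q\geqslant 2$; this is a presentational change, not a different argument.

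Where your plan has a real gap is in the vanishing $\mathbb{W}_S\Omega_{\mathbb{Z}}^q = 0$ for $q\geqslant 2$, which you dismiss as ``reducing an arbitrary monomial by these relations.'' That reduction does not go through naively. After reducing to products $dV_m\eta([1])\,dV_n\eta([1])$, the Leibniz rule and your product formula reduce everything to the elements $ddV_n\eta([1]_{S/n}) = d\log\eta([-1]_S)\cdot dV_n\eta([1]_{S/n})$; but expanding $d\log\eta([-1]_S) = \sum_{r\geqslant 1}2^{r-1}dV_{2^r}\eta([1]_{S/2^r})$ simply re-expresses these in terms of the very degree-$2$ products you are trying to kill, so the recursion is circular. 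The paper escapes this by a genuinely different computation: using $dd\log\eta([-1]_S) = 0$, $F_n d\log\eta([-1]_S) = d\log\eta([-1]_{S/n})$, and the projection formula, one shows
\[
ddV_n\eta([1]_{S/n}) \;=\; dV_n\bigl(d\log\eta([-1]_{S/n})\bigr) \;=\; n\cdot ddV_{2n}\eta([1]_{S/2n}),
\]
and then the finiteness of $S$ furnishes an induction (the right-hand side eventually lands in $\mathbb{W}_{\emptyset}\Omega_{\mathbb{Z}}^{\boldsymbol{\cdot}} = 0$). This identity is the missing ingredient; without it neither your $\pi$ (whose well-definedness already requires $q\geqslant 2$ vanishing in $\mathbb{W}_S\Omega_{\mathbb{Z}}^{\boldsymbol{\cdot}}$) nor the final isomorphism in degree $\geqslant 2$ is established. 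You should also note that the passage to finite $S$ is not merely a convenience here but is essential to this induction.
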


\begin{proof}We claim that there is a Witt complex
$E_S^{\boldsymbol{\cdot}}$ over $\Z$ with
$$\begin{aligned}
E_S^0 & = \prod_{n \in S} \Z \cdot
V_n\eta_{S/n}([1]_{S/n}), \cr 
E_S^1 & = \prod_{n \in S} \Z/n\Z \cdot
dV_n\eta_{S/n}([1]_{S/n}), \cr
\end{aligned}$$
with $E_S^q = 0$ for $q \geqslant 2$, and with the Witt complex
structure maps defined to be the unique additive maps satisfying the
formulas listed in the statement. For instance, the map
$\eta_S \colon \mathbb{W}_S(\Z) \to E_S^0$ is defined to be
the unique additive map that to $V_n([1]_{S/n})$ assigns
$V_n\eta_{S/n}([1]_{S/n})$; it is a ring isomorphism by
Proposition~\ref{W_S(Z)}. Granting this claim, the map $\eta_S$
extends uniquely to a map
$$\xymatrix{
{ \mathbb{W}_S\Omega_{\Z}^{\boldsymbol{\cdot}} } \ar[r]^-{\eta_S} &
{ E_S^{\boldsymbol{\cdot}} }
}$$
of Witt complex over $\Z$. It is an isomorphism in degree $q = 0$, as
noted above, and it is also an isomorphism in degree $q = 1$. For it
is clearly surjective in degree $q = 1$, and since
Lemma~\ref{wittcomplexrelations} shows that, in every Witt complex
over $\Z$,
$$ndV_n\eta_{S/n}([1]_{S/n}) = V_nd\eta_{S/n}([1]_{S/n}) = 0,$$
it is also injective in degree $q = 1$. Finally, to prove that
$\eta_S$ is an isomorphism in degrees $q \geqslant 2$, we must show
that $\mathbb{W}_S\Omega_{\Z}^2$ is zero, and to this end, it suffices
to show that, for every finite truncation set $S$ and every $n \in S$, the
element $ddV_n\eta_{S/n}([1]_{S/n})$ of $\mathbb{W}_S\Omega_{\Z}^2$
vanishes. Now, using Lemma~\ref{wittcomplexrelations} and the
projection formula, we find
$$\begin{aligned}
{} & ddV_n\eta_{S/n}([1]_{S/n}) = d\log\eta_S([-1]_S) \cdot
dV_n\eta_{S/n}([1]_{S/n}) \cr
{} & = d(d\log\eta_S([-1]_S) \cdot V_n\eta_{S/n}([1]_{S/n})) 
= dV_n(d\log\eta_{S/n}([-1]_{S/n})) \cr
{} & = \smash{ \sum_{r \geqslant 1} 2^{r-1} dV_ndV_{2^r}\eta_{S/2^rn}([1]_{S/2^rn}) }
= n ddV_{2n}\eta_{S/2n}([1]_{S/2n}), \cr
\end{aligned}$$
and since $S$ was assumed to be finite, this furnishes an induction
argument showing that $ddV_n\eta([1]_{S/n})$ is zero.

It remains to prove the claim. For notational convenience, we will
suppress the subscript $S$. We first show that the product on
$E_S^{\boldsymbol{\cdot}}$ is associative. Since $[-1]$ is a square
root of one in $\mathbb{W}_S(\Z)$ which, by Addendum~\ref{teichmuller},
is equal to $-[1] + V_2([1])$, the formula defining the
product in $E_S^{\boldsymbol{\cdot}}$ shows that, as elements of $E_S^1$,
\begin{equation}\label{equation1}
\begin{aligned}
{} & d\log\eta([-1]) = (-\eta([1]) + V_2\eta([1]))d(-\eta([1]) + V_2\eta([1])) \cr
{} & = (-\eta([1]) + V_2\eta([1]))dV_2\eta([1]) 
 = \sum_{r \geqslant 1} 2^{r-1}dV_{2^r}\eta([1]). \cr
\end{aligned}
\end{equation}
Using this formula, we find
$$\begin{aligned}
{} & V_e\eta([1]) \cdot d\log\eta([-1]) = \sum_{r \geqslant 1}
2^{r-1}V_e\eta([1])dV_{2^r}\eta([1]) \cr
{} & = \sum_{r \geqslant 1} 2^{r-1}(e,2^r] dV_{[e,2^r]}\eta([1]) +
\{e,2\}\sum_{s \geqslant 1} 2^{s-1}[e,2]dV_{2^s[e,2]}\eta([1]). \cr
\end{aligned}$$
Moreover, $2^{r-1}(m,2^r]$ is congruent to $2^{r-1}m$ modulo
$[m,2^r]$. For $2^{r-1}m$ is congruent to $0$ modulo $m$ and
to $2^{r-1}(m,2^r)$ modulo $2^r$. Hence, if $e$ is
odd, the lower left-hand summand is equal to  
$\sum_{r \geqslant 1}2^{r-1}edV_{2^re}\eta([1])$ and the lower
right-hand summand is zero, and if $e$ is even, the lower left-hand
summand is zero and the lower right-hand summand is equal to
$\sum_{s \geqslant 1}2^{s-1}edV_{2^se}\eta([1])$. So for any
positive integer $e$, we have
\begin{equation}\label{equation2}
V_e\eta([1]) \cdot d\log\eta([-1]) = \sum_{r \geqslant 1}
2^{r-1}edV_{2^re}\eta([1]).
\end{equation}
We conclude that the product in $E_S^{\boldsymbol{\cdot}}$ satisfies
\begin{equation}\label{equation3}
\begin{aligned}
V_m\eta([1]) \cdot dV_n\eta([1])
{} & = (m,n] \cdot dV_{[m,n]}\eta([1]) \cr
{} & \hskip-4mm + \{m,n\} \cdot V_{[m,n]}\eta([1]) \cdot d\log\eta([-1]), \cr
\end{aligned}
\end{equation}
where we use~(\ref{equation2}) to identify the second term on the
right-hand side. A similar calculation shows that for all positive
integers $a$ and $b$,
$$V_a\eta([1]) \cdot (V_b\eta([1]) \cdot d\log\eta([-1])) 
= (V_a\eta([1]) \cdot V_b\eta([1])) \cdot d\log\eta([-1]).$$
Using this identity, we find, on the one hand, that
$$\begin{aligned}
{} & V_l\eta([1]) \cdot (V_m([1]) \cdot dV_n\eta([1]))
= (l,[m,n]] (m,n] \cdot dV_{[l,[m,n]]}\eta([1]) \cr
{} & \hskip4mm + (\{l,[m,n]\} (m,n] + (l,[m,n]) \{m,n\})
\cdot V_{[l,[m,n]]}\eta([1]) d\log\eta([-1]), \cr
\end{aligned}$$
and, on the other hand, that
$$\begin{aligned}
{} & (V_l\eta([1]) \cdot V_m([1])) \cdot dV_n\eta([1])
= (l,m) ([l,m],n] \cdot dV_{[[l,m],n]}\eta([1]) \cr
{} & \hskip4mm + (l,m)\{[l,m],n\} \cdot V_{[[l,m],n]}\eta([1]) d\log\eta([1]). \cr
\end{aligned}$$
Here $[l,[m,n]] = [[l,m],n]$ and to prove that $(l,[m,n]] 
(m,n]$ and $(l,m)  ([l,m],n]$ are congruent modulo $[l,[m,n]]$,
we use that $[l,[m,n]]\Z$ is the kernel of the map
$$\Z \to \Z/l\Z \times \Z/m\Z \times \Z/n\Z$$
that takes $a$ to $(a + l\Z, a + m\Z, a + n\Z)$. So it will suffice to
verify that the desired congruence holds modulo $l$, $m$, and $n$,
respectively. By definition, both numbers are zero modulo $l$ and $m$,
and the congruence modulo $n$ follows from the identity
$$(l,[m,n]) \cdot (m,n) = (l,m) \cdot ([l,m],n)$$
which is readily verified by multiplying by $[l,[m,n]] = [[l,m],n]$ on
both sides. We also note that
$\{l,[m,n]\}(m,n]+(l,[m,n])\{m,n\}$ and $(l,m)\{[l,m],n\}$ are
well-defined integers modulo $2$ which are non-zero if and
only if $n$ and exactly one of $l$ and $m$ are even. This shows that the
product in $E_S^{\boldsymbol{\cdot}}$ is associative.

We proceed to verify the axioms~(i)--(v) of
Definition~\ref{bigwittcomplex}. First, we note that since the sum
$(m,n] + (n,m]$ is congruent to $(m,n)$ modulo $[m,n] = [n,m]$, we
have
$$\begin{aligned}
{} & dV_m\eta([1]) \cdot V_n\eta([1]) + V_m\eta([1]) \cdot
dV_n\eta([1]) \cr
{} & = (n,m] dV_{[n,m]}\eta([1]) +
\{n,m\}V_{[n,m]}\eta([1])d\log\eta([-1]) \cr
{} & \hskip4mm + (m,n]dV_{[m,n]}\eta([1]) +
\{m,n\}V_{[m,n]}\eta([1])d\log\eta([-1]) \cr
{} & = (m,n)dV_{[m,n]}\eta([1]) = d(V_m\eta([1]) \cdot V_n\eta([1])) \cr
\end{aligned}$$
which verifies axiom~(i). 

To verify axiom~(iv), we first show that for all positive integers $m$,
\begin{equation}\label{equation4}
F_m(d\log\eta([-1])) = d\log([-1]).
\end{equation}
It follows from formula~(\ref{equation1}) that
$$\begin{aligned}
{} & F_m(d\log\eta([-1])) = \sum_{r \geqslant 1}
2^{r-1}F_mdV_{2^r}\eta([1]) \cr
{} & = \sum_{r \geqslant 1} 2^{r-1}(m,2^r]/m \cdot
dV_{[m,2^r]/m}\eta([1]) 
+ \{m,2\} \sum_{s \geqslant 1} 2^{s-1}[m,2]/m \cdot
dV_{2^s[m,2]/m}\eta([1]) \cr
{} & = \sum_{r \geqslant 1} 2^{r-1} \cdot
dV_{[m,2^r]/m}\eta([1]) 
+ \{m,2\} \sum_{s \geqslant 1} 2^{s-1}[m,2]/m \cdot
dV_{2^s[m,2]/m}\eta([1]) \cr
\end{aligned}$$
where the second equality follows from the definition of $F_m$ and 
the last equality uses that $2^{r-1}(m,2^r]$ is congruent to
$2^{r-1}m$ modulo $[m,2^r]$. Now, if the integer $m$ is even, then
the lower the left-hand term is zero, since $[m,2^r]/m = 2^t$ with $t <
r$, and the lower right-hand term is equal to $\sum_{s \geqslant 1}
2^{s-1}dV_{2^s}\eta([1])$; and if $m$ is odd, then the lower left-hand
term is equal to $\sum_{r \geqslant 1}2^{r-1}dV_{2^r}\eta([1])$ and
the lower right-hand term is zero. Hence, using~(\ref{equation1})
again, we conclude that~(\ref{equation4}) holds. By using this
equality, we may restate the definition of the Frobenius on $E_S^1$ in
the form
\begin{equation}\label{equation5}
\begin{aligned}
F_mdV_n\eta([1])
{} & = (m,n]/m \cdot dV_{[m,n]/m}\eta([1]) \cr
{} & \hskip-4mm + \{m,n\} \cdot V_{[m,n]/m}\eta([1]) \cdot
d\log\eta([-1]), \cr
\end{aligned}
\end{equation}
and taking $n = mk$ and $y = V_k\eta([1])$, this verifies
axiom~(iv). 

We next consider axiom~(ii) which is easily verified on $E^0$. We
first show that the identity $F_lF_m = F_{lm}$ holds on
$E^1$. Using~(\ref{equation5}), we have, on the one hand, that
$$\begin{aligned}
{} & F_l(F_mdV_n\eta([1])) 
= (l,[m,n]/m]/l \cdot (m,n]/m \cdot dV_{[l,[m,n]/m]/l}\eta([1]) \cr
{} & + (\{l,[m,n]/m\} (m,n]/m + (l,[m,n]/m) \{m,n\}) \cdot
V_{[l,[m,n]/m]/l}\eta([1]) \cdot d\log\eta([-1]), \cr
\end{aligned}$$
and, on the other hand, that
$$\begin{aligned}
{} & F_{lm}dV_n\eta([1]) = (lm,n]/lm \cdot dV_{[lm,n]/lm}\eta([1]) \cr
{} & \hskip8mm + \{lm,n\} \cdot V_{[lm,n]/lm}\eta([1]) \cdot
d\log\eta([-1]). \cr 
\end{aligned}$$
Here, we have $[l,[m,n]/m]/l = [lm,n]/lm$, since both are equal to
$n/(lm,n)$, and moreover, $(lm,n]$ and $(l,[m,n]/m](m,n]$ are congruent 
modulo $[lm,n]$, since both are congruent to $0$ modulo $lm$ and
congruent to $(lm,n) = (l,[m,n]/m)(m,n)$ modulo $n$. Finally, the two
factors $\{lm,n\}$ and $\{l,[m,n]/m\}(m,n]/m + (l,[m,n]/m)\{m,n\}$ are
well-defined integers modulo $2$ which are non-zero if and only if
$lm$ and $n$ are even. Indeed, for the first factor, this is the
definition, and for the second factor, it is seen as follows. If $n$
is odd, then both summands in this factor are zero, so suppose that
$n$ is even. If $lm$ is odd, then again both summand are zero; if $l$
is odd and $m$ is even, then the first summand is zero and the second
summand is non-zero; if $l$ is even and $m$ is odd, then the first
summand is non-zero and the second summand is zero; if $l$ and $m$ are
both even and if the $2$-adic valuation of $m$ is strictly less than
that of $n$, then $[m,n]/m$ is even and $(m,n]/m$ is not divisible by $2$
modulo $[m,n]/m$, so the first summand is non-zero and the second summand is 
zero; and, finally, if $l$ and $m$ are both even and the $2$-adic
valuation of $m$ is greater than or equal to that of $n$, then
$[m,n]/m$ is odd, so the first summand is zero and the second summand
is non-zero. This completes the proof that $F_lF_m = F_{lm}$. 
The formulas $V_lV_m = V_{lm}$ and $F_mV_m = m \cdot \id$ are
readily verified, so we next show that $F_lV_m = V_mF_l$ if $l$ and
$m$ are relatively prime. To this end, we first note that by~(\ref{equation2}) and by the definition of $V_m$ on $E_S^1$, we
have
\begin{equation}\label{equation6}
V_m(V_e\eta([1]) \cdot d\log\eta([-1])) = V_{me}\eta([1]) \cdot d\log\eta([-1]),
\end{equation}
for all positive integers $m$ and $e$. Using this
identity,~(\ref{equation4}), and ~(\ref{equation5}), we find that
$$\begin{aligned}
F_lV_mdV_n\eta([1])
{} & = m(l,mn]/l \cdot dV_{[l,mn]/l}\eta([1]) \cr
{} & \hskip4mm + m\{l,mn\} \cdot V_{[l,mn]/l}\eta([1]) \cdot d\log\eta([-1]) \cr
V_mF_ldV_n\eta([1])
{} & = m(l,n]/l \cdot dV_{m[l,n]/l}\eta([1]) \cr
{} & \hskip4mm + \{l,n\} \cdot 
V_{m[l,n]/l}\eta([1]) \cdot d\log\eta([-1]). \cr 
\end{aligned}$$
But if $l$ and $m$ are relatively prime, then $[l,mn]$ and $m[l,n]$ are
equal; $m(l,mn]$ and $m(l,n]$ are congruent modulo $[l,mn] = m[l,n]$,
as both are congruent to $0$ modulo $lm$ and to $m(l,mn) = m(l,n)$
modulo $mn$; and $m\{l,mn\} = \{l,n\}$, as is easily checked. This
shows that $F_lV_m = V_mF_l$, concluding the proof of axiom~(ii).

To verify axiom~(iii), we first note that for all positive integers
$l$, $m$, and $n$,
$$V_l\eta([1]) \cdot F_mdV_n\eta([1]) = V_l(\eta([1]) \cdot
F_{lm}dV_n\eta([1])).$$
Indeed, by~(\ref{equation3}) and~(\ref{equation5}), the identity
becomes
$$\begin{aligned}
{} & (l,[m,n]/m] (m,n]/m \cdot dV_{[l,[m,n]/m]}\eta([1]) \cr
{} & + ( \{l,[m,n]/m\} (m,n]/m + (l,[m,n]/m) \{m,n\}) \cdot
V_{[l,[m,n]/m]}\eta([1]) \cdot d\log\eta([-1]) \cr
{} & = (lm,n]/lm \cdot dV_{[lm,n]/m}\eta([1]) + \{lm,n\} \cdot
V_{[lm,n]/m}\eta([1]) \cdot d\log\eta([-1]), \cr
\end{aligned}$$
and hence, the proof of the identity $F_lF_m = F_{lm}$ above shows
that the two sides are equal. Now, from this equation and from the
definition of $V_m$ on $E_S^{\boldsymbol{\cdot}}$, we find that
$$V_l(V_m\eta([1])) \cdot dV_n\eta([1])
 = V_l(V_m\eta([1]) \cdot F_ldV_n\eta([1])).$$
It remains to prove that also
$$V_l(dV_m\eta([1])) \cdot V_n\eta([1])
= V_l(dV_m\eta([1]) \cdot F_lV_n\eta([1])).$$
Using~(\ref{equation3}), the left-hand side becomes
$$l(n,lm] \cdot dV_{[n,lm]}\eta([1]) + l\{n,lm\} \cdot
V_{[n,lm]}\eta([1]) \cdot d\log\eta([-1])$$
and the right-hand sides becomes
$$\begin{aligned}
{} & l(l,n)([l,n]/l,m] \cdot dV_{l[[l,n]/l,m]}\eta([1]) \cr
{} & + (l,n)\{[l,n]/l,m\} \cdot V_{l[[l,n]/l,m]}\eta([1]) \cdot
d\log\eta([-1]). \cr
\end{aligned}$$
We have seen above that $[n,lm] = l[[l,n]/l,m]$, and $l(n,lm]$ and
$l(l,n)([l,n]/l,m]$ are congruent modulo $[n,lm]$, since both are
congruent to $0$ modulo $n$ and congruent to $l(n,lm)$ modulo
$lm$. Here we use that $([l,n]/l,m]$ is congruent to $([l,n]/l,m)$
modulo $m$ and that $(l,n)([l,n]/l,m) = (n,lm)$. Moreover, $l\{n,lm\}$
and $(l,n)\{[l,n]/l,m\}$ are well-defined integers modulo $2$ which
are non-zero if and only if $l$ is odd and $m$ and $n$ are both
even. This completes the proof that axiom~(iii) holds. Indeed, it is
clear that axiom~(iii) holds on $E^0$.

Finally, to verify axiom~(v), it suffices to consider the case $S = \N$. 
Since the formula for $[a]$ in Addendum~\ref{teichmuller} is quite
complicated, it would be a rather onerous task to verify this axiom
directly. By axiom~(i), the map $d\eta \colon \mathbb{W}(A) \to
E_{\N}^1$ is a derivation, provided that we view $E_{\N}^1$ as a
$\mathbb{W}(A)$-module via $\eta^0 = \eta \colon \mathbb{W}(A) \to
E_{\N}^0$, and hence, there is a unique $\mathbb{W}(A)$-linear map 
$\eta^1 \colon \Omega_{\mathbb{W}(A)}^1 \to E_{\N}^1$ such that
$d\eta^0 = \eta^1 d$. We will show that, for every positive integer
$m$, the diagram
$$\xymatrix{
{ \Omega_{\mathbb{W}(A)}^1 } \ar[r]^{\eta^1} \ar[d]^{F_m} &
{ E_{\N}^1 } \ar[d]^{F_m} \cr
{ \Omega_{\mathbb{W}(A)}^1 } \ar[r]^{\eta^1} &
{ E_{\N}^1 } \cr
}$$
commutes. Granting this, we find that
$$\begin{aligned}
{} & F_md\eta([a]) = F_md\eta^0([a]) 
= F_m\eta^1 d([a]) = \eta^1 F_m d([a]) 
= \eta^1([a]^{m-1}d[a]) \cr
{} & = \eta^0([a]^{m-1})\eta^1 d([a])
= \eta^0([a])^{m-1} d\eta^0([a]) 
= \eta([a])^{m-1} d\eta([a]), \cr
\end{aligned}$$
which verifies axiom~(v). Here, the first and last equalities are
identities; the third equality holds by the commutativity of the
diagram above; the fourth equality holds by
Theorem~\ref{dividedfrobenius}; and the remaining equalities hold by
the properties of the maps $\eta^0$ and $\eta^1$. It remains to prove
that the diagram above commutes, and by Theorem~\ref{dividedfrobenius}
and axiom~(i), we may assume that $m = p$ is a prime number. It
further suffices to show that for every positive integer $n$, the
image of the element $dV_n([1])$ by the two composites in the diagram
are equal. We consider three cases separately. First, if $p$ is odd
and $n = ps$ is divisible by $p$, then
$$F_p\eta^1 dV_n([1]) = F_pdV_n\eta^0([1]) = dV_s\eta^0([1]),$$
while
$$\begin{aligned}
\eta^1 F_pdV_n([1])
{} & = \eta^1( V_n([1])^{p-1}dV_n([1]) + d( \smash{ \frac{F_pV_n([1])
    - V_n([1])^p}{p} } )) \cr
{} & = \eta^1( n^{p-2}(V_n([1])dV_n([1]) + dV_s([1]) -
n^{p-2}sdV_n([1]))) \cr
{} & = \eta^1dV_s([1]) = dV_s\eta^0([1]) \cr
\end{aligned}$$
as desired. Here we used that $ndV_n([1]) = 0$. Second, if $p = 2$ and
$n = 2s$ is even, then 
$$F_2\eta^1 dV_n([1]) = F_2dV_n\eta^0([1]) = dV_s\eta^0([1]) + \sum_{r
  \geqslant 1} 2^{r-1}sdV_{2^rs}\eta^0([1])$$
while
$$\begin{aligned}
\eta^1 F_2dV_n([1])
{} & = \eta^1(dV_s([1]) + V_n([1])dV_n([1]) - sdV_n([1])) \cr
{} & = dV_s\eta^0([1]) + V_n\eta^0([1])dV_n\eta^0([1]) - sdV_n\eta^0([1]) \cr
{} & = dV_s\eta^0([1]) + \sum_{t \geqslant
  1}2^{t-1}ndV_{2^tn}\eta^0([1]) - sdV_n\eta^0([1]), 
\end{aligned}$$
so the desired equality holds in this case, too, since
$2sdV_n\eta^0([1]) = 0$. Third, if $n$ is not divisible by $p$, then
$(p,n]$ is congruent to $1 - n^{p-1}$ modulo $[p,n] = pn$, since both
are congruent to $0$ modulo $p$ and to $1$ modulo $n$, and
$\{p,n\}$ is zero. Hence, 
$$F_p\eta^1 dV_n([1]) = F_pdV_n\eta^0([1]) = \frac{1 - n^{p-1}}{p}
dV_n\eta^0([1]).$$
We wish to prove that this is equal to
$$\begin{aligned} \eta^1 F_pdV_n([1])
{} & = \eta^1(V_n([1])^{p-1}dV_n([1]) + d(\frac{F_pV_n([1])
    - V_n([1])^p}{p})) \cr
{} & = \eta^1(n^{p-2}V_n([1])dV_n([1]) + \frac{1 -
  n^{p-1}}{p} dV_n([1])) \cr
{} & = n^{p-2}V_n\eta^0([1])dV_n\eta^0([1]) + \frac{1 -
  n^{p-1}}{p} dV_n\eta^0([1]), \cr
\end{aligned}$$
or equivalently, that $n^{p-2}V_n\eta^0([1])dV_n\eta^0([1])$ is
zero. If $p$ is odd, then this holds, since $ndV_n\eta^0([1])$ is
zero; and if $p = 2$, then it holds, since
$V_n\eta^0([1])dV_n\eta^0([1])$ is zero, for $n$ odd. This completes
the proof of the claim made at the beginning of the proof, and hence,
of the theorem.
\end{proof}

\begin{addendum}\label{dgideal}Let $S$ be a finite truncation set. The
kernel of the canonical map
$$\eta_S \colon \hat{\Omega}_{\mathbb{W}_S(Z)}^{\boldsymbol{\cdot}}
\to \mathbb{W}_S\Omega_Z^{\boldsymbol{\cdot}}$$
is equal to the graded ideal generated by the following
elements~{\rm(i)--(ii)} together with their images by the derivation
$d$.  
\begin{enumerate}
\item[{\rm(i)}]For all $m,n \in S$, the element
$$\begin{aligned}
 & V_m([1]_{S/m})dV_n([1]_{S/n}) - (m,n]dV_{[m,n]}([1]_{S/[m,n]})
\cr
{} & \hskip4mm 
- \{m,n\}\sum_{r \geqslant 1}2^{r-1}[m,n]dV_{2^r[m,n]}([1]_{S/2^r[m,n]})
\cr
\end{aligned}$$
\item[{\rm(ii)}]For all $n \in S$, the element $ndV_n([1]_{S/n})$.
\end{enumerate}
\end{addendum}

\begin{proof}This follows from the proof of Theorem~\ref{bigdrwofz}. 
\end{proof}

We remark that in Addendum~\ref{dgideal}, the graded ring
$\smash{ \hat{\Omega}_{\mathbb{W}_S(\Z)}^{\boldsymbol{\cdot}} }$ may
be replaced by the graded ring
$\Omega_{\mathbb{W}_S(\Z)}^{\boldsymbol{\cdot}}$. 

\begin{acknowledgements}It is a pleasure to gratefully acknowledge the
hospitality and financial support of the following institutions that I
have visited while working on this paper: University of Rennes,
Australian National University, the Lorentz Institute at Leiden, and
the Mathematical Sciences Research Institute at Berkeley. I would also
like to acknowledge the mathematical debt of the work presented here
to my collaboration with Ib Madsen and to conversations with Jim
Borger. Finally, I would like to thank an anonymous referee for many
valuable suggestions on improving the exposition.
\end{acknowledgements}

\providecommand{\bysame}{\leavevmode\hbox to3em{\hrulefill}\thinspace}
\providecommand{\MR}{\relax\ifhmode\unskip\space\fi MR }
\providecommand{\MRhref}[2]{%
  \href{http://www.ams.org/mathscinet-getitem?mr=#1}{#2}
}
\providecommand{\href}[2]{#2}

\end{document}